\documentclass[bj]{imsart}
\RequirePackage[OT1]{fontenc}
\RequirePackage{amsthm,amsmath,amsfonts,amssymb,natbib}
\RequirePackage[colorlinks,citecolor=blue,urlcolor=blue]{hyperref}
\arxiv{arXiv:0000.0000}
\startlocaldefs
\numberwithin{equation}{section}
\theoremstyle{plain}
\newtheorem{definition}{Definition}[subsection]

\newtheorem{example}{Example}[subsection]

\newtheorem{theorem}{Theorem}[section]
\newtheorem{lemma}{Lemma}[section]
\newtheorem{proposition}{Proposition}[section]
\newtheorem{corollary}{Corollary}[section]
\setcitestyle{citesep={,}}
\theoremstyle{remark}

\endlocaldefs
\bibliographystyle{imsart-nameyear}
\begin{document}
\begin{frontmatter}
\title{L\'{e}vy copulas: a probabilistic point of view}
\runtitle{L\'{e}vy copulas: a probabilistic point of view}
\begin{aug}
\author{\fnms{Ayi} \snm{Ajavon}\thanksref{a}\ead[label=a,mark]{Ayi.Ajavon@canada.ca, ayiajav@yahoo.com }}
\address[a]{100 Tunney's Pasture Driveway, Ottawa (Ontario)  K1A 0T6
\printead{a}}
\affiliation{Statistics Canada}
\end{aug}
\begin{abstract}  
There is a one-to-one correspondence between L\'{e}vy copulas and proper copulas. The correspondence relies on a relationship between L\'{e}vy copulas sitting on $[0,+\infty]^d$ and max-id distributions. The max-id distributions are defined with respect to a partial order that is compatible with the inclusion of sets bounded away from the origin. An important consequence of the result is the possibility to define parametric L\'{e}vy copulas as mirror images of proper parametric copulas. For example, proper Archimedean copulas are generated by functions that are Williamson $d-$transforms of the cdf of the radial component of random vectors with exchangeable distributions $F_{R}$. In contrast, the generators of Archimedean L\'{e}vy copulas are Williamson $d-$transforms of $-\log(1-F_{R})$.
\end{abstract}

\begin{keyword}
\kwd{ Archimedean copulas, L\'{e}vy copulas,  max-infinite divisibility}
\end{keyword}
\end{frontmatter}
\section{Introduction}
L\'{e}vy processes are  processes with stationary  and independent increments in disjoint time intervals; they encompass the Brownian motion and the Poisson process, two important processes in Probability.  For a L\'{e}vy process $X=\{X_{t} \in \mathbb{R}^{d}: t \geq  0\}$, the random vector $X_{t}$ has an infinitely divisible distribution with the characteristic function $\rho$,
\begin{eqnarray*}
\label{LevyKhin}
\rho(y)=\exp \left\{i \langle y,\beta \rangle - \langle \gamma y, y \rangle +\int\{ e^{i  \langle y, x  \rangle}-1 -i \langle y, x \rangle \}1_{\{ \lVert x \rVert \leq 1\}}(x)\nu(dx) \right\}, \forall{y} \in \mathbb{R}^{d},
\end{eqnarray*}
where $<\cdot, \cdot>$ stands for the scalar product,   $\beta$  is a constant vector, $<\gamma  \cdot, \cdot>$ is a positive semi-definite quadratic form, $\nu$ is a L\'{e}vy measure on $\mathbb{R}^{d}$ satisfying $\nu((0,\ldots,0))=0$, $\int_{\mathbb{R}^{d}}\min(\lVert x \rVert^{2},1) \nu(dx)< \infty$. While the dependence in the Gaussian part of the L\'{e}vy process is characterized by the covariance matrix $\gamma$, the dependence in the pure jump part  is defined by the L\'{e}vy copula. L\'{e}vy copulas  differ from  proper copulas in some aspects: L\'{e}vy copulas are defined on $[-\infty, +\infty]^{d}$, can be unbounded  and depict dependence in a dynamic context while proper copulas  are defined on $[0,1]^{d}$, are bounded and correspond to a cross-sectional dependence. Proper copulas are functions that associate multidimensional cumulative distribution functions (cdfs) with their marginal cdfs: for  any cdf $F$ of a random vector $(X_{1}, \ldots,X_{d})$ in $\mathbb{R}^{d}$ with marginal cdfs $F_1, F_2, \ldots, F_d$, there exists a copula $C$ defined on $\overline{Ran(F_{1}) \times \cdots \times Ran(F_{d})}$, the closure of the image space of $(F_1, F_2, \ldots, F_d)$ such that
 \[
F(x_{1}, \ldots, x_{d})=C(F_1(x_{1}), \ldots, F_d(x_{d})), (x_{1}, \ldots, x_{d}) \in \mathbb{R}^{d}.
\]  
Moreover, the definition of the copula $C$ could be extended to  $[0,1]^{d}$ by extending the law of $(F_1(X_1), F_2(X_2), \ldots, F_d(X_d))$ to $[0,1]^{d}$. Examples of such extensions could be found in \cite{genest2007primer}. The definition of L\'{e}vy copulas parallels the definition of proper copulas. L\'{e}vy copulas are defined as functions linking the tail integral functions to the corresponding marginal tail integral functions. A generalization of the Sklar theorem to survival functions \citep{mcneil2015quantitative,mai2010extendibility} helps to establish their existence. In addition, L\'{e}vy copulas can be deduced from  proper copulas associated with the jumps $\Delta X_{t}$ for small $t$ \citep{kallsen2006characterization}. These similitudes elicit a natural question: is it possible to define a correspondence between L\'{e}vy copulas and proper copulas?

Using the lattice theory and the Scott topology, \citet[Theorem 1.33]{molchanov2005theory} constructed a correspondence between  L\'{e}vy measures concentrated on $[-\infty,+\infty]^{d}\setminus(+\infty, \ldots, +\infty)$ and  probability measures. Those results does not apply if the L\'{e}vy process has infinite activity in the neighbourhood of the origin.  In this  case, sets bounded away from the origin need to be considered in the definition of the correspondence but they are not easy to deal with in the lattice theory framework.The \citet{goldie1989records}'s construction of i.i.d. probabilistic structures  associated with a general partial order provides an example of how to solve the problem. Specifically, we consider in this article a partial order compatible with the inclusion of sets bounded away from the origin. We have the following results:
\begin{itemize}
\item [(a)] the L\'{e}vy measure can be represented as the hazard measure of a probability law;
\item [(b)] L\'{e}vy copulas sitting on $[0,+\infty]^{d}$ are in a one-to-one correspondence with proper copulas of max-id distributions.
\end{itemize}
In order to unify the presentation, we introduce the notion of exponential envelope families. The exponential envelope families of L\'{e}vy processes have been developed by  \citet{lauritzen1975general, kuchler1989exponential3}, and  \citet{kuchler2006exponential}. If $Cu(\chi)$ is the Laplace exponent (or the cumulant) associated with $\chi$ a non-negative semi-character on $(\mathbb{R}^{d},*)$ (i.e. $\chi$ is a real valued function satisfying $\chi(x * y)=\chi(x)\chi(y)$ \citep{clifford1961algebraic,lauritzen1975general} ), the exponential envelope family consists of a natural family of exponential distributions with density function
$$
h_{t}(x)=\frac{\chi(x)}{\exp(t Cu(\chi))}=\chi(x) \exp{\{-t Cu(\chi)\}}, 
$$
that may be derived from the expression of the Laplace transform
$$
\exp(t Cu(\chi))=E\{\chi(\wedge_{s \leq t} \Delta X_{s})\} < \infty, t>0
$$
with
$$
\wedge_{s \in \{s_{1},s_{2}\}}  \Delta X_{s}= \Delta X_{s_{1}}*\Delta X_{s_{2}}, \quad s_{1}, s_{2} \in [0,t].
$$
When the semi-character $\chi$ is the exponential function $x \mapsto \exp \langle \theta,x \rangle$ and the operation $*$ is the addition, the change of process is called 
Exponential Tilting, Esscher Tilting or Exponential Change of Measure \citep{kallsen2002cumulant,gerber1993option}.  When the semi-character $\chi$ is an indicator variable, $1_{A}$:
\begin{equation*}
	1_{A}(\omega) = \begin{cases}
		1 \text{ if } \omega \in A, \\
		0 \text{ if } \omega \in A^c,
	\end{cases}	
\end{equation*}
the set $A$ is called a filter. Examples of sets $A$ and  operations $*$ are:
\begin{itemize}
\item [(a)]$A$ is of the form $[l_1, +\infty] \times \cdots \times [l_d, +\infty]$ and the operation $*$ is the componentwise minimum; we have for $x=(x_1, \ldots, x_{d})$ and $y=(y_{1}, \ldots, y_{d})$,
$$
1_{A}(\min(x,y))=1_{A}(x) \times 1_{A}(y);
$$
\item [(b)]  $A$ is of the form $[-\infty,l_1] \times \cdots \times [-\infty,l_d]$ and the operation $*$ is the componentwise maximum; we have 
$$
1_{A}(\max(x,y))=1_{A}(x) \times 1_{A}(y).
$$
\end{itemize}
The exponential family  resulting from the transformation of the L\'{e}vy process $X$ is characterized by the following properties:
\begin{itemize}
\item [(a)]  the last observation is the sufficient statistic of the model;
\item [(b)]  the exponential family is an extreme point model in the families  that have the last observation as sufficient statistic \citep{kuchler1989exponential3}.
\end{itemize}
In other words, we are examining infinitely divisible distributions under a change of a measure that has interesting properties. The plan of the article is as follows.

 Section 2 presents the preliminary results on the general exponential model and the L\'{e}vy copula. The Laplace transform is widely used for its spectral properties but can also be analyzed from the point of view of an exponential change of measure \citep{kallsen2002cumulant,gerber1993option,
kuchler1989exponential3,kuchler1981analytical, kuchler2006exponential}. In the literature, infinitely divisible distributions  \citep{goldie1967class, steutel1967note, bondesson1981classes, barndorff2006some} have been studied from the point of view of the L\'{e}vy measure. It could be interesting to  characterize directly  infinitely divisible distributions by the properties of a probability measure instead of the L\'{e}vy measure. The dependence of multivariate infinitely divisible distributions can be closely associated with survival copulas \citep{mcneil2015quantitative, mai2010extendibility}. The Sklar's theorem for survival copula extended to tail integral functions help to establish the existence of L\'{e}vy copulas \citep{tankov2003financial,  kallsen2006characterization}. In order to propose a probabilistic framework for L\'{e}vy copulas, following \citet{barndorff2007levy}, we propose a non-negative $d-$increasing tail integral function. The corresponding  L\'{e}vy copula takes also non-negative values and coincides with the traditional L\'{e}vy copula on $[0,+\infty]^d$ \citep{barndorff2007levy}. We introduce the Archimedean copulas \citep{bauerle2008}, one of the most important families of proper copulas, and Archimedean L\'{e}vy copulas, as they will serve later for the illustration of the correspondence between L\'{e}vy copulas and proper copulas. The two parametric families are characterized by $d-$completely monotone functions \citep{mcneil2009multivariate,williamson1955multiply} that, we will show later, are related. Section 3 considers envelope exponential models where the semi-characters are filters. The operation $*$ is a maximum or a minimum with respect to a partial order that is related to the inclusion order of  sets bounded away from the origin.  The mean measure of the L\'{e}vy process is represented as a hazard measure associated with the distribution of the upper or lower record.  One of the contribution of this section is to establish the distributions of the upper and lower record under a partial order (other than the product order) which are not present in the literature. For comparison, the distribution of the lower record for the product order can be found in \citet[chap.5]{resnick2013extreme}. Section 4 presents a one-to-one correspondence between L\'{e}vy copulas and proper copulas: it make use of the distribution of the upper record and the proper copula associated. Section 5 illustrates how the correspondence works with the Archimedean copulas. It shows in particular that while generators of proper Archimedean copulas are Williamson $d-$transform of $F_{R}$, the cdf of the radial component of the associated random vector, generators of  Archimedean L\'{e}vy copulas are Williamson $d-$transform of $-\log(1-F_{R})$.
\section{Preliminaries}
\subsection{Notations and definitions}
Let us consider $H$  a nondecreasing function on $\mathbb{R}$; the generalized inverse of $H$ is the function $y \mapsto G^{-1}(y)$ with $G^{-1}$ defined as follows:
$$
G^{-1}(y)= \inf \{s: G(s) \geq y\}, y \in \mathbb{R}.
$$
Generalized inverses intervene in the inverse probability integral transform and in the sampling of random variables: if $U$ is uniformely distributed on $[0,1]$, and $G$ is a cumulative distribution function (cdf) then $G^{-1}(U)$ is a random variable with cdf $G$. Conversely, if $G$ is a continuous cdf of a random variable $V$, then $G(V)$ has a uniform distribution on $[0,1]$. In the theory of copulas, this transformation serves to transform the marginal cdfs of a multivariate cdf into uniform ones and it is called the probability integral transform. 

To state the preliminary results, we must introduce some notation. We will be working on $\overline{\mathbb{R}}^{d}$,  equipped with an operation $*$ such that $(\overline{\mathbb{R}}^{d}, *)$ is a semigroup, $d \geq 1$; unless precisely defined, the  relations and operations are to be taken componentwise. For example, the maximum over a set of vectors is the componentwise maximum:

$$
\max_{k \leq n} (X_{1k}, \ldots, X_{dk})=(\max_{k \leq n}X_{1k}, \ldots, \max_{k \leq n} X_{dk}).
$$
Inequalities are to be taken componentwise: if $x=(x_1, \ldots, x_{d})$, $y=(y_1, \ldots, y_{d})$,
$$
x < y \text{ means } x_{i}<y_{i}, \, i=1, \ldots,d, \text{ and } \\
x \leq  y \text{ means } x_{i} \leq y_{i}, \, i=1, \ldots,d.
$$
Rectangles will be denoted by
\begin{align*}
(a,b]&= \{x=(x_1, \ldots, x_{d}):a_{i} <x_{i} \leq b_{i}, i=1, \ldots, d\}, \text{ or }\\
[a,b)&=\{x=(x_1, \ldots, x_{d}):a_{i} \leq x_{i} < b_{i}, i=1, \ldots, d\},  \text{ or }\\
[a,b]&= \{x=(x_1, \ldots, x_{d}):a_{i} \leq x_{i} \leq b_{i}, i=1, \ldots, d\}.
\end{align*}
The tail integral functions are traditionally defined for sets $I(x_1) \times \cdots \times I(x_d)$ where $I(x_i)=[x i , + \infty)$ if $x_i \geq 0$ and $I(x_i)=(-\infty, x_i)$ if $x_i<0$, $i=1, \ldots,d$. We will instead consider the closed sets $J(x_1) \times J(x_2) \times \cdots \times J(x_d)$ with
$$
J(x_i)=[x i , + \infty] \text { if } x_i\geq 0 \text{ and } [-\infty, x_i]  \text { if }  x_i< 0, i \in \{1, \ldots,d\}.
$$
The closures are used essentially to ensure that the class of sets $J(x_1) \times J(x_2) \times \cdots \times J(x_d)$, $x=(x_1, \ldots, x_{d}) \in \overline{\mathbb{R}}^{d}$ is stable by countable union. If $L$ is the set  $J(l_1) \times J(l_2) \times \cdots \times J(l_d)$, then we will call $L^{c}$  the set $\overline{\mathbb{R}}^{\epsilon_1}\times \cdots \times \overline{\mathbb{R}}^{\epsilon_d}\setminus L$ where  $\epsilon_i=+$ if $l_i \geq 0$, $\epsilon_i=-$ if $l_i< 0$. For the definition of the probabilistic L\'{e}vy copula, to 
make sure that the tail integral $U^{+}$ is $d-$increasing, we will use $J^{+}$ instead of $J$ with
$$
J^{+}(x_i)=[1/x i , + \infty] \text { if } x_i\geq 0 \text{ and } [-\infty, x_i]  \text { if }  x_i< 0, i \in \{1, \ldots,d\}.
$$
For the clarity of exposition, we recall the important notions about L\'{e}vy copulas and the family of Archimedean L\'{e}vy copulas. In the same time, we propose a version of the L\'{e}vy copula living on ${\overline{\mathbb{R}}^{+}}^{d}$.
\subsection{ L\'{e}vy copulas and Archimedean L\'{e}vy copulas}
Let us consider a L\'{e}vy process  $X=\{X_{t} \in \mathbb{R}^{d}: t \geq  0\}$ with the triplet $(\beta,\gamma, \nu)$. It is made of a Gaussian component with parameter $(\beta,\gamma, 0)$ and a pure jump component $(0,0,\nu)$. Compound Poisson processes correspond to L\'{e}vy processes with a finite mean measure $\nu$. They are processes with a finite number of jumps in a given time span $[0,t]$; the copula corresponding to the c.d.f. of $X(t)$ can be expressed  in terms of the cdfs of the jumps. However, when the L\'{e}vy process has an infinite activity in any finite time, the L\'{e}vy measure is no more bounded and there does not exist a closed form expression of the  dependence of the process in term of the dependence of the jumps. \cite{tankov2003financial} and \cite{kallsen2006characterization}  proposed a way to characterize the dependence of a multivariate L\'{e}vy process that is applicable when the L\'{e}vy measure is not bounded. The method relies on applying the Sklar's theorem to the tail integral transform of the L\'{e}vy measure.
\begin{definition}
	Let $X=(X_1, \ldots, X_d)$ be a L\'{e}vy process in $\mathbb{R}^{d}$ with L\'{e}vy measure  $\nu$, the tail integral function corresponding to $\nu$ is a function $U: (\mathbb{R}\setminus\{0\})^{d} \rightarrow \mathbb{R}$ such that
	$$
	U(x_1, \ldots, x_d )=(\prod_{i=1}^{d}sgn(x_i))  \nu(I(x_1) \times I(x_2) \times \cdots \times I(x_d))
	$$
	where $I(x_i)=[x i , + \infty)$, $sgn(x_i)=1$ if $x_i \geq 0$, $I(x_i)=(-\infty, x_i)$, $sgn(x_i)=-1$ if $x_i<0$, $i=1, \ldots,d$;   $U$ is equal to zero if at least one of its argument is equal to $\infty$ , and $U$ is equal to $+\infty$  if all the 	arguments are equal to zero. 
\end{definition}
The tail integral function $U$ can take negative values. Here, we propose  versions of the tail integral function that are non-negative.  Let 
 $J^{+}(x_i)$ be the set $[1/x i , + \infty]$ for $x_i \geq 0$ and $J^{+}(x_i)=[-\infty, x_i]$ for $x_i<0$, $i=1, \ldots,d$; $J^{+}(x_1, \ldots,x_d)$ designates the product $J^{+}(x_1) \times J^{+}(x_2) \times \cdots \times J^{+}(x_d)$. Let us introduce absolutely monotone functions that will play an important role in the rest of this section.
 \begin{definition}	
A function $\psi$ defined on $\mathbb{R}$ is $d$-absolutely
	monotone if $\Delta_{h}^{k} \psi(t)=\sum_{i=0}^{k}(-1)^{k-i} \binom{k}{i}\psi(t+i h) \geq 0$  for $k \leq d$ and all $t$.  The function $\psi$ is $d$-completely monotone (respectively $d$-completely alternating) if, for $k \leq d$, $(-1)^{k}\Delta_{h}^{k} \psi \geq 0$ (respectively $(-1)^{k+1}\Delta_{h}^{k} \psi \geq 0$).
\end{definition}
\begin{definition}
	Let us consider the  integral functions $U_{l}^{+}: (\overline{\mathbb{R}})^{d} \rightarrow [0,\infty]$ and $U_{u}^{+}: (\overline{\mathbb{R}})^{d} \rightarrow [0,\infty]$  such that
	
\begin{align*}
U_{l}^{+}(x_1, \ldots, x_d )&= -\log(1- \exp(-\nu(\overline{\mathbb{R}}^{sign(x_1)}\times \cdots \times \overline{\mathbb{R}}^{sign(x_d)} \setminus J^{+}(x_1, \ldots,x_d)))	\\
U_{u}^{+}(x_1, \ldots, x_d )&= \nu(J^{+}(x_1, \ldots,x_d))).	
\end{align*}
	where $sign(x_i)=+$ if $x_i \geq 0$ and  $sign(x_i)=-$ if $x_i< 0$, $i=1, \ldots,d$.
\end{definition}

\begin{lemma}
\label{pre1}
Let us suppose that  $\nu(J^{+}(x_1) \times J^{+}(x_2) \times \cdots \times J^{+}(x_d))=0$ whenever  there exists $i \in \{1,\ldots,d\}$  such that $J^{+}(x_i) \subset \{+\infty,-\infty\}$  and $\nu([0, +\infty] \times \cdots \times  [0, +\infty] )=+\infty$. The functions $U_{l}^{+}$ and $U_{u}^{+}$ are $d-$increasing and grounded on any set of the form $\overline{\mathbb{R}}^{\epsilon_1}\times \cdots \overline{\mathbb{R}}^{\epsilon_d}$ with $\epsilon_i \in \{-,+\}$ and $i \in \{1,\ldots, d\}$.
\end{lemma}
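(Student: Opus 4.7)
\textit{Proof plan.} I would work one orthant at a time. Fix $E=\overline{\mathbb{R}}^{\epsilon_1}\times\cdots\times\overline{\mathbb{R}}^{\epsilon_d}$ and record the product decomposition $J^+(x)=\prod_{i=1}^d J^+(x_i)$ on $E$, in which each factor $J^+(x_i)$ grows monotonically under inclusion as $x_i$ moves in the orthant order: $J^+(x_i)$ collapses to the singleton $\{+\infty\}$ when $\epsilon_i=+$ and $x_i=0$, to $\{-\infty\}$ when $\epsilon_i=-$ and $x_i=-\infty$, and expands to the full coordinate orthant $[0,+\infty]$ or $[-\infty,0]$ at the opposite edge.

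For $U_u^+$, both claims are direct. Given a rectangle $[a,b]\subset E$, the product-set identity
\[
\prod_{i=1}^d\bigl(1_{J^+(b_i)}(y_i)-1_{J^+(a_i)}(y_i)\bigr)=1_{\prod_{i=1}^d(J^+(b_i)\setminus J^+(a_i))}(y),
\]
which holds because $J^+(a_i)\subset J^+(b_i)$ within $E$, yields upon integration against $\nu$
\[
\sum_{v\in\{a,b\}^d}(-1)^{d-N_b(v)}\,U_u^+(v)=\nu\Bigl(\textstyle\prod_i\bigl(J^+(b_i)\setminus J^+(a_i)\bigr)\Bigr)\ge 0,
\]
with $N_b(v)=|\{i:v_i=b_i\}|$; this is the desired $d$-increase. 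Groundedness is immediate from the first standing hypothesis: at an inner-boundary point of $E$ some factor $J^+(x_i)$ is a singleton in $\{+\infty,-\infty\}$ and so $\nu(J^+(x))=0$.

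For $U_l^+$, I would write $U_l^+(x)=\phi(f(x))$ with $f(x)=\nu(L(x))$, $L(x)=E\setminus J^+(x)$, and expand $\phi(z)=-\log(1-e^{-z})=\sum_{k\ge 1}k^{-1}e^{-kz}$. Each term $\exp(-kf(x))$ is the avoidance probability of a Poisson point process on $E$ with intensity $k\nu$; and since $L(x)=\bigcup_{i=1}^d\{y\in E:y_i\notin J^+(x_i)\}$ is a union of coordinate slabs, a preliminary inclusion--exclusion over these $d$ slabs reduces the $d$-th order difference of $\exp(-kf)$ to a positive linear combination of expressions of the form already handled in the $U_u^+$ step. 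Summing in $k$ after a uniform-convergence check on compact sub-rectangles of $E$ gives the $d$-increase of $U_l^+$. Groundedness then follows from the two hypotheses together: at the inner boundary of $E$ the set $L(x)$ equals $E$ up to a $\nu$-null set by the first hypothesis, and the second (infinite activity) hypothesis, applied on the relevant orthant, forces $\nu(L(x))=+\infty$, whence $1-e^{-f(x)}=1$ and $U_l^+(x)=0$.

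The main obstacle is the $d$-increase of $U_l^+$: the logarithmic composition rules out a single inclusion--exclusion, and because $L(x)$ is a union rather than a product of coordinate slabs, each Poisson avoidance probability itself requires a nested inclusion--exclusion before the clean product-set argument of the $U_u^+$ step can be invoked. The delicate point is the combinatorial bookkeeping needed to verify non-negativity after these two layers of inclusion--exclusion and then to justify the exchange of the signed sum over $v\in\{a,b\}^d$ with the series in $k$.
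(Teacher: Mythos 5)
Your treatment of $U_u^{+}$ is correct and in fact more self-contained than the paper's: the identity $\sum_{v\in\{a,b\}^d}(-1)^{d-N_b(v)}U_u^{+}(v)=\nu\bigl(\prod_i(J^{+}(b_i)\setminus J^{+}(a_i))\bigr)\ge 0$ settles $d$-increase directly from $\nu$, and groundedness falls out of the first standing hypothesis (you should only add a word about rectangles touching the top of the orthant, where individual terms may be $+\infty$ and the signed expansion needs care). The genuine gap is exactly where you flag it: the $d$-increase of $U_l^{+}$. Your claim that an inclusion--exclusion over the coordinate slabs constituting $L(x)=E\setminus J^{+}(x)$ reduces the $d$-th order difference of $\exp(-k\nu(L(x)))$ to a \emph{positive} linear combination of $\nu$-volumes of product sets is not substantiated, and I do not believe it holds in that form: already for $d=2$ the inclusion--exclusion produces the factor $\exp(+k\nu(A_1\cap A_2))$ with a \emph{positive} exponent, so the signs do not sort themselves out termwise. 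What you are really trying to prove is that $x\mapsto\exp(-k\nu(L(x)))$ is a distribution function, i.e.\ the max-infinite-divisibility statement; the standard proofs of that fact do not go through combinatorics on $\nu$ but through the probabilistic structure you mention only in passing: $\exp(-k\nu(L(x)))=P(N(L(x))=0)=P(\text{all points of }N\text{ lie in }J^{+}(x))$, which is the probability of a product event in the coordinatewise extremum of the Poisson points and is therefore automatically $d$-increasing (alternatively, it is a limit of $n$-th powers of distribution functions). If you replace the two layers of inclusion--exclusion by this one observation, the term-by-term argument and the summation over $k$ become unnecessary, since $-\log(1-s)$ applied to a $d$-increasing grounded function is again $d$-increasing and grounded.

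For comparison, the paper's own proof is explicitly partial: it \emph{assumes} that $G_{l,\epsilon_1\cdots\epsilon_d}(x)=\exp(-\nu(E\setminus J^{+}(x)))$ and $G_{u,\epsilon_1\cdots\epsilon_d}(x)=1-\exp(-\nu(J^{+}(x)))$ are cdfs (this is established only in Sections 3--4 via the upper/lower record processes and the partial order adapted to sets bounded away from the origin), and then invokes the result of Morillas that distortion by the absolutely monotone map $s\mapsto-\log(1-s)$ preserves $d$-increase and groundedness. So the paper defers precisely the step on which your proposal founders. Your groundedness arguments for both functions, and your use of the two hypotheses there, match the intended logic and are fine.
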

In section 4, it will be shown that the applications $(x_{1}, \ldots, x_{d})\mapsto \exp(-\nu(\overline{\mathbb{R}}^{sign(x_1)}\times \cdots \overline{\mathbb{R}}^{sign(x_d)} \setminus J^{+}(x_1, \ldots,x_d))$  and $(x_{1}, \ldots, x_{d})\mapsto 1-\exp(-\nu(J^{+}(x_1, \ldots,x_d))$ are cdfs associated with some record processes. They are also $\max$-infinitely divisible distributions (in a sense to be defined) associated with some partial orders to be defined in section 3.
\begin{proof}
We will give a partial proof by relying on the assumption that there exists cdfs $G_{l,\epsilon_1 \cdots \epsilon_d }$ , $G_{u,\epsilon_1 \cdots \epsilon_d }$  on $\overline{\mathbb{R}}^{\epsilon_1}\times \cdots \overline{\mathbb{R}}^{\epsilon_d}$ such that 
\begin{align*}
U_{l}^{+}(x_1, \ldots, x_d )&= -\log(1- G_{l,\epsilon_1 \cdots \epsilon_d }(x_1, \ldots, x_d)),	\\
U_{u}^{+}(x_1, \ldots, x_d )&= -\log(1- G_{u,\epsilon_1 \cdots \epsilon_d }(x_1, \ldots, x_d)),	
\end{align*}
$(x_1, \ldots, x_d ) \in \overline{\mathbb{R}}^{\epsilon_1}\times \cdots \overline{\mathbb{R}}^{\epsilon_d}$. Since the function $s \mapsto -\log(1-s)$ is absolutely monotone,  and the functions $G_{l,\epsilon_1 \cdots \epsilon_d }$,$G_{u,\epsilon_1 \cdots \epsilon_d }$ are $d-$increasing, then $U_{l}^{+}$ and $U_{u}^{+}$  are also $d-$increasing \citep{morillas2005characterization}. Since the cdfs are grounded, the functions $U_{l}^{+}$ and $U_{u}^{+}$ are also grounded.
\end{proof}
For a continuous survival function $G$ defined on $\mathbb{R}^{d}$,  an extension of the Sklar's theorem to survival functions (see \cite{mcneil2015quantitative} or \cite{mai2010extendibility}) states that there exists a unique $d$-dimensional copula linking the distribution $G$ and the marginal survival functions. The same idea applies to the tail integral of a L\'{e}vy measure: if the L\'{e}vy measure is non-atomic, there exists a grounded $d-$increasing function (the L\'{e}vy copula) linking the tail integral of the L\'{e}vy measure and the marginal tail integrals.
\begin {definition}[L\'{e}vy copula]
The L\'{e}vy copula is a function $F: (-\infty,+\infty]^{d}\rightarrow  [-\infty, +\infty]$ such that
\begin{itemize}
	\item [(a)] $F(x_1, \ldots,x_d)=+\infty$ if $x_1 =\ldots= x_d = +\infty$;
	\item [(b)] $F(x_1, \ldots,x_d)=0$ if for at least one $i \in \{1,2, \ldots,d\}$, $x_i=0$;
	\item [(c)] $F$ is $d-$increasing ( the volume  of a set $(a_1, b_1]\times \cdots \times (a_d, b_d]$ is positive);
	\item [(d)] the one-dimensional marginal $F_i$ satisfies $F_i(x_i)=x_i$, $x_i \in  (-\infty,+\infty]$, $i \in \{1, \ldots,d\}$.
\end{itemize}
\end{definition}
The L\'{e}vy copula $F$ corresponds to the tail integral function $U$ and it can take negative values. For the tail integral functions $U_{l}^{+}$ or $U_{u}^{+}$, let us introduce a corresponding L\'{e}vy copula  called the probabilistic L\'{e}vy copula $F^{+}$. It is a L\'{e}vy copula living on $[0,+\infty]^{d}$. For the rest of this section,  $U_{l}^{+}$ or $U_{u}^{+}$ will be called indistinctly $U^{+}$.
\begin {definition}[Probabilistic L\'{e}vy copula]
The probabilistic L\'{e}vy copula $F^{+}$ is a L\'{e}vy copula defined on $[0,+\infty]^{d}$ with values in $[0, +\infty]$. In particular, it verifies the properties of a L\'{e}vy copula restricted to  $[0,+\infty]^{d}$:
\begin{itemize}
	\item [(a)] $F^{+}(x_1, \ldots,x_d)=+\infty$ if $x_1 =\ldots= x_d = +\infty$;
	\item [(b)] $F^{+}(x_1, \ldots,x_d)=0$ if for at least one $i \in \{1,2, \ldots,d\}$, $x_i=0$;
	\item [(c)] $F^{+}$ is $d-$increasing ( the volume  of a set $(a_1, b_1]\times \cdots \times (a_d, b_d]$ is positive);
	\item [(d)] the one-dimensional marginal $F^{+}_i$ satisfies $F^{+}_i(x_i)=x_i$, $x_i \in  [0,+\infty]$, $i \in \{1, \ldots,d\}$.
\end{itemize}
\end{definition}
Let $\overline{\prod_{i=1}^{d}\ R_i}$ be the closure of the range of the function $(U_1, U_2, \ldots, U_{d})$ (respectively $(U_1^{+}, U_2^{+}, \ldots, U_{d}^{+})$) where $U_1, \ldots, U_d$ (respectively $U_1^{+}, U_2^{+}, \ldots, U_{d}^{+}$) are the marginal tail integral functions.
\begin{lemma}
	Let $X$ be a multidimensional  L\'{e}vy process in $\mathbb{R}^{d}$. For a non-atomic  L\'{e}vy measure, we can associate uniquely a L\'{e}vy copula (respectively a probabilistic L\'{e}vy copula) defined by the relationship:
\begin{equation}
\label{eqlevy1}
U(x_1, \ldots,x_d)=F(U_1(x_1), \ldots, U_d(x_d))
\end{equation}
(respectively
\begin{equation}
\label{eqlevy2}
U^{+}(x_1, \ldots,x_d)=F^{+}(U_1^{+}(x_1), \ldots, U_d^{+}(x_d))
\end{equation}
).\\

If the  L\'{e}vy measure  is atomic,  the L\'{e}vy copula (respectively the probabilistic L\'{e}vy copula ) is uniquely defined only on $\overline{\prod_{i=1}^{d}\ R_i}$ (respectively $\overline{\prod_{i=1}^{d}\ R_i^{+}}$) where $R_i$ (resp. $R_i^{+}$ ) is the range of $U_i$ (resp. $U_i^{+}$ ).
\end{lemma}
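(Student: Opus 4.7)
The plan is to reduce the claim for the probabilistic Lévy copula to the classical Sklar theorem for distribution functions via the bijection $s\mapsto -\log(1-s)$, and to handle the traditional Lévy copula along the lines already established in \citet{tankov2003financial, kallsen2006characterization}. First I would invoke the representation already set up in Lemma~\ref{pre1}: on each orthant $\overline{\mathbb{R}}^{\epsilon_1}\times\cdots\times\overline{\mathbb{R}}^{\epsilon_d}$ we have $U^{+}(x_1,\ldots,x_d)=-\log(1-G_{\epsilon_1\cdots\epsilon_d}(x_1,\ldots,x_d))$ for a genuine cdf $G_{\epsilon_1\cdots\epsilon_d}$. Applying the Sklar-type theorem for multivariate cdfs (or survival functions, depending on the orthant) cited in the preliminaries to each $G_{\epsilon_1\cdots\epsilon_d}$, I obtain a proper copula $C_{\epsilon_1\cdots\epsilon_d}$ satisfying $G_{\epsilon_1\cdots\epsilon_d}(x_1,\ldots,x_d)=C_{\epsilon_1\cdots\epsilon_d}(G_1(x_1),\ldots,G_d(x_d))$ on the closure of the range of the marginals, and uniquely so on that set.

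Next I would define the probabilistic Lévy copula by transporting $C_{\epsilon_1\cdots\epsilon_d}$ through the map $s\mapsto -\log(1-s)$ coordinate-wise. Concretely, for $u=(u_1,\ldots,u_d)\in[0,\infty]^d$, set
\begin{equation*}
F^{+}(u_1,\ldots,u_d)=-\log\bigl(1-C_{\epsilon_1\cdots\epsilon_d}(1-e^{-u_1},\ldots,1-e^{-u_d})\bigr),
\end{equation*}
and check that the defining relation \eqref{eqlevy2} then follows by direct substitution, using $U_i^{+}(x_i)=-\log(1-G_i(x_i))$ for the marginals. The properties (a)--(d) in the definition of the probabilistic Lévy copula are then inherited from those of $C_{\epsilon_1\cdots\epsilon_d}$: groundedness and the boundary values at $0$ and $+\infty$ come from the corresponding cdf values, marginal identity $F^{+}_i(x_i)=x_i$ follows because the probability integral transform maps $G_i(X_i)$ to uniform, and the $d$-increasing property is preserved by $s\mapsto -\log(1-s)$ as already used in the proof of Lemma~\ref{pre1} (since this function is absolutely monotone and the composition of a $d$-increasing function with a coordinate-wise monotone transformation is still $d$-increasing, as in \citet{morillas2005characterization}). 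The argument for the traditional Lévy copula $F$ on $(-\infty,+\infty]^d$ is strictly parallel, glueing the orthant-wise copulas via the sign convention already built into $U$.

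For uniqueness, the copula $C_{\epsilon_1\cdots\epsilon_d}$ is unique on $\overline{\mathrm{Ran}(G_1)\times\cdots\times\mathrm{Ran}(G_d)}$ by Sklar's theorem; transporting through the strictly increasing homeomorphism $s\mapsto -\log(1-s)$ yields that $F^{+}$ is unique on $\overline{\prod_i R_i^{+}}$. When the Lévy measure is non-atomic, each marginal tail integral $U_i^{+}$ is continuous, so $R_i^{+}=[0,+\infty]$ and uniqueness extends to all of $[0,+\infty]^d$; when atoms are present, uniqueness is restricted exactly to $\overline{\prod_i R_i^{+}}$, matching the statement of the lemma. The same dichotomy applies to $F$ and $\overline{\prod_i R_i}$.

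The main obstacle I anticipate is not the algebraic transport itself but the bookkeeping needed to patch together the orthant-wise copulas $C_{\epsilon_1\cdots\epsilon_d}$ into a single function $F$ on the whole of $(-\infty,+\infty]^d$ that remains $d$-increasing across orthant boundaries; in the probabilistic version this is avoided because $F^{+}$ lives on a single orthant $[0,+\infty]^d$, which is precisely the advantage being highlighted in this section, but for the traditional $F$ one must check the sign conventions implicit in the definition of $U$ are consistent with the $d$-increasing property inherited from the individual $C_{\epsilon_1\cdots\epsilon_d}$.
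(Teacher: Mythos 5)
Your proposal is correct and follows essentially the same route as the paper: the paper likewise defers equation \eqref{eqlevy1} to \citet[Theorem 3.6]{kallsen2006characterization} and obtains \eqref{eqlevy2} by applying Sklar's theorem to the cdfs $G_{l,\epsilon_1\cdots\epsilon_d}$ or $G_{u,\epsilon_1\cdots\epsilon_d}$ and then distorting the resulting copula by the absolutely monotone map $s\mapsto-\log(1-s)$, citing \citet{morillas2005characterization} for the preservation of the $d$-increasing and grounded properties. Your write-up simply makes explicit the substitution, the marginal identity, and the uniqueness transport that the paper leaves implicit.
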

 \begin{proof}
For the equation \ref{eqlevy1}, see  \citet[Theorem 3.6]{kallsen2006characterization} for a proof. For the equation \ref{eqlevy2}, we can use the Sklar's theorem applied to the cdfs $G_{l,\epsilon_1 \cdots \epsilon_d }$ or $G_{u,\epsilon_1 \cdots \epsilon_d }$ and consider the distortion of the resulting copula by the absolutely monotone function $s \mapsto -\log(1-s)$: the result is a $d$-increasing and grounded function that satisfies the definition of a probabilistic L\'{e}vy copula (see  \citet[Theorem 3.6]{morillas2005characterization} for an example of a distortion of a proper copula by an absolutely monotone function).
 \end{proof} 
%
\begin{definition}
	Let us consider $\psi$ a  continuous strictly decreasing function defined on $[0, +\infty]$ with values in $[0,1]$ such that $\psi(0)=1$, $\psi(\infty)=0$. Let  $\psi^{-1}$ denote the pseudo-inverse of $\psi$. A proper copula $C$ is called an Archimedean copula if it admits the functional form:
	\[
	C(u_{1}, \ldots, u_{d})=\psi(\psi^{-1}(u_1)+\cdots+\psi^{-1}(u_d)), (u_{1}, \ldots, u_{d})\in [0,1]^{d}.
	\]
\end{definition}
 \cite{mcneil2009multivariate}  showed that the set of generators $\psi$ of Archimedean copulas are the $d-$completely monotone functions $\mathcal{\Psi}_{d}$:
\begin{align*}
	\mathcal{\Psi}_{d}=\{\psi: [0, \infty] \rightarrow [0,1]: \psi(0)=1, \psi(\infty)=0,& (-1)^{j}\psi^{(j)}\geq 0,  j \leq d-2, \text{ and } \\
	&(-1)^{d-2}\psi^{(d-2)} \text{ decreasing and convex.} \}
\end{align*}
Any $d-$completely monotone function $\psi$ has also the following integral representation \citep{mcneil2009multivariate, williamson1955multiply}:
$$
\psi(x)=\int \max(0, 1- at)^{d-1} d\gamma(a), d \geq 2,
$$
where $\gamma$ represents the cdf of a positive random variable. Inversely, $\gamma$ has the following representation \citep[Theorem 2]{williamson1955multiply}:
$$
\gamma(u)=\frac{(-1)^{d-2}}{d-2} \int_{0^{+}}^{u}\frac{1}{x^{d-2}}d\psi^{(d-2)}(\frac{1}{x}), u>0.
$$
Archimedean copulas can be equivalently expressed in a multiplicative form by considering $\rho$  such that $\psi(\cdot)=\rho(\exp(-\cdot))$:
\[
C_{\rho}(u_1, \ldots, u_{d})=\rho(\prod_{i=1}^{d}\rho^{-1}(u_{i})), (u_{1}, \ldots, u_{d}) \in [0,1]^{d};
\]
$\rho$ is a multiplicative generator. It appears that  Archimedean copulas are the result of a transformation of the independence copula $C(u_1, \ldots, u_{d})=\prod_{i=1}^{d}u_{i}$ by the generator $\rho$. 
For L\'{e}vy copulas, the definition of the Archimedean copulas remains the same, except that the range of the generator changes.
\begin{definition}[\cite{bauerle2008}]
	Let us consider $\overline{\psi}: (0,+\infty)\rightarrow (0,+\infty) $ a $d-$completely monotone function, with $\lim_{x \rightarrow 0} \overline{\psi}(x)=\infty$ and $\lim_{x \rightarrow \infty} \overline{\psi}(x)=0$, and the function
	\[
	F(x_{1}, \ldots, x_{d})=\overline{\psi}(\overline{\psi}^{-1}(x_1)+\cdots+\overline{\psi}^{-1}(x_d)),
	\]
	$(x_{1}, \ldots, x_{d}) \in (0,+\infty)^{d}$. The function $F$ is  an Archimedean L\'{e}vy copula.
\end{definition}

\subsection{General exponential family}
Let us consider the commutative semi-group  $(\overline{\mathbb{R}}^{d},*)$;
\begin{itemize}
	\item [(i)] $\forall x, y, z \in \overline{\mathbb{R}}^{d}, x*(y*z)=(x*y)*z$,
	\item [(ii)] $\forall x, y \in \overline{\mathbb{R}}^{d}, x*y=y*x$.    
\end{itemize}	
\begin{definition}
	The space of  homomorphisms $\chi$ defined on $(\overline{\mathbb{R}}^{d},*)$ with values in $\mathbb{R}^{+}$, i.e. satisfying for all $x,y \in \mathbb{R}^{d}$
	\[
	\chi(x*y)=\chi(x) \cdot \chi(y), x, y \in \overline{\mathbb{R}}^{d}
	\]
	is called $L(\overline{\mathbb{R}}^{d}, \mathbb{R}^{+})$; $L(\overline{\mathbb{R}}^{d}, \mathbb{R}^{+})$ is equipped with the operation $\otimes$ defined as follows:
	\[
	\forall \chi_{1}, \chi_{2} \in L(\overline{\mathbb{R}}^{d}, \mathbb{R}^{+}), x \in \overline{\mathbb{R}}^{d},  \chi_{1}\otimes\chi_{2}(x)= \chi_{1}(x)\chi_{2}(x);
	\] 
	$(L(\overline{\mathbb{R}}^{d}, \mathbb{R}^{+}), \otimes)$ forms a semi-group and its elements are called semi-characters.
\end{definition}
\begin{definition}
	A set $A \subset \overline{\mathbb{R}}^{d}$ is called a filter if 
	\begin{itemize}
		\item [(i)] $(A,*)$ is a subsemigroup of $(\overline{\mathbb{R}}^{d},*)$ and 
		\item [(ii)] for $x, y \in \overline{\mathbb{R}}^{d}$ such that $x*y \in A$, we have $x \in A$ and  $y \in A$.
	\end{itemize}
\end{definition}	
For any element $\chi$ of $L(\overline{\mathbb{R}}^{d}, \mathbb{R}^{+})$, we can define a filter $A$ in the following way:
\[
A=\{x: \chi(x)>0\};
\]
inversely any filter $A$ can be associated with the semi-character $\chi=1_{A}$. Semi-characters serve to generalize the notion of Laplace transform to semigroups.
 If $Cu(\chi)$ is the Laplace exponent associated with the semi-character $\chi$ and the probability measure $\mu_{t}$, we have 
 $$
 \exp(t Cu(\chi))=E\{\chi(\wedge_{s \leq t} \Delta X_{s})\} < \infty, t>0.
 $$
 where $\wedge_{s \in \{s_1, s_2\}} \Delta X_{s})=\Delta X_{s_1}*\Delta X_{s_2}$.  The Laplace transform, $\exp(t Cu(\chi))$, when it is finite, defines a  family of probability measures $\{h_{t}\mu_{t}: t>0\}$ with density function
$$
h_{t}(x)=\frac{\chi(x)}{\exp(t Cu(\chi))}=\chi(x) \exp{\{-t Cu(\chi)\}}, t>0.
$$
When the function $\chi$ is an exponential function and the operation $*$ is the addition, the transformation $X_{t} \mapsto h_{t}(\sum_{s \leq t} \Delta X_{s})= h_{t}(X_{t})$ is called an Exponential Tilting, an Esscher Tilting or an Exponential Change of Measure \citep{kallsen2002cumulant, gerber1993option}.  Moreover, the process $\{h_{t}(X_{t})=\chi(X_{t}) \exp{\{-t Cu(\chi)\}}: t>0\}$ is a martingale for every semi-character $\chi$.

Let us consider a filtered space $(\Omega, \mathcal{F},  \{\mathcal{F}_{t}: t>0\})$ with $\Omega$ the space  of the compositions  $\{\wedge_{s\leq t} \Delta X_{s}: t>0 \}$ of the  jumps of a pure jump L\'{e}vy process and $ \mathcal{F}_{t}$ the filtration generated by the compositions.

\begin{definition}
 Let us call $\mu_{t}$ the distribution of $\wedge_{s\leq t} \Delta X_{s}$, $t>0$. A class $\mathcal{P}=\{\mu^{\chi}: \chi \text{ semi-character}\}$ of probability measures on the filtered space $(\Omega, \mathcal{F},  \{\mathcal{F}_{t}: t>0\})$  
is called an exponential family if 
		$$
		\frac{d\mu_{t}^{\chi}}{d\mu_{t}}(x)=\chi(x) \exp{\{- Cu(\chi)t\}}=\exp{\{k(\chi)x - Cu(\chi)t\}}, 
		$$
		and
		$\mu_{t}^{\chi}$ is the restriction of $\mu^{\chi}$ to $\mathcal{F}_{t}$,  $x \in \mathbb{R}^{d}$, $t>0$. 
	\end{definition}
\citet{kuchler1981analytical} showed that, under the change of measure, the characteristic function $\rho$ has the following expression 
\begin{eqnarray*}
\rho(z)&=&\exp \left\{i \langle z,\beta^{'} \rangle - \langle \gamma^{'} z, z \rangle +\int\{ e^{i  \langle z, x  \rangle}-1 -i \frac{\langle z, x \rangle }{1+\langle x, x \rangle}\}1_{D}(x) \nu'(dx) \right\},
\end{eqnarray*}
with $D=\{x \in \mathbb{R}^{d}: \lVert x \rVert \leq 1\}$ and $\int\{ \frac{\langle x, x \rangle }{1+\langle x, x \rangle}\}1_{D}(x) \nu(dx) < \infty$. In particular, L\'{e}vy processes eligible to be part of the exponential family are those with jumps bounded above or below i.e the jumps are in a set $J(l_1)\times \cdots \times J(l_d)$, $(l_1, \ldots, l_d) \in \mathbb{R}^{d}$.
\begin{lemma}[Characteristics of the transformed process]
If $\nu'$ is the L\'{e}vy measure associated with the transformed process, we have 
\begin{align*}
\nu'(dx)&=\chi(x)\nu(dx), x \in \overline{\mathbb{R}}^{d}.
\end{align*}
\end{lemma}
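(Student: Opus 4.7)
The plan is to compute the Laplace exponent of the transformed process two ways and read off $\nu'$ by uniqueness of the L\'{e}vy--Khintchine triplet. Fix any semi-character $\chi'$ with finite cumulant under $\mu^{\chi}$. Combining the defining density $h_t(x)=\chi(x)\exp\{-t\,Cu(\chi)\}$ of the exponential family with the semi-character property $(\chi\otimes\chi')(x)=\chi(x)\chi'(x)$ gives
$$
\exp\{t\,Cu_{\nu'}(\chi')\}=E_{\mu_t^{\chi}}\bigl[\chi'(\wedge_{s\leq t}\Delta X_s)\bigr]=E_{\mu_t}\bigl[(\chi\otimes\chi')(\wedge_{s\leq t}\Delta X_s)\bigr]\exp\{-t\,Cu_\nu(\chi)\},
$$
so that $Cu_{\nu'}(\chi')=Cu_\nu(\chi\otimes\chi')-Cu_\nu(\chi)$. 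This cumulant identity is the single workhorse of the proof.

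Specializing to exponential semi-characters $\chi(x)=e^{\langle\theta,x\rangle}$ and $\chi'(x)=e^{\langle\theta',x\rangle}$, I would plug the L\'{e}vy--Khintchine expression into both sides. After cancellations, the jump part of the difference rearranges as
$$
\int\bigl(e^{\langle\theta',x\rangle}-1-\langle\theta',x\rangle 1_D(x)\bigr)\chi(x)\nu(dx)+\int\langle\theta',x\rangle 1_D(x)(\chi(x)-1)\nu(dx),
$$
where $D=\{x:\|x\|\leq 1\}$. The first integral is already in L\'{e}vy--Khintchine form with L\'{e}vy measure $\chi\nu$; the second is linear in $\theta'$ and is absorbed into the new drift $\beta'=\beta+\gamma\theta+\int x\,1_D(x)(\chi(x)-1)\nu(dx)$, while the Gaussian contribution depends only on $\theta'$ (not on $\nu$) and modifies at most the Gaussian quadratic form, not the jump measure. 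Uniqueness of the triplet then forces $\nu'(dx)=\chi(x)\nu(dx)$. For the filter case $\chi=1_A$, the same identity applies directly: $(1_A\otimes\chi')(x)=1_A(x)\chi'(x)$, so the cumulant difference is an integral of $\chi'$-terms against $\nu$ restricted to $A$, giving $\nu'(dx)=1_A(x)\nu(dx)$ as required.

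The main obstacle is the bookkeeping around the truncation function: one must verify that the drift correction $\int x\,1_D(x)(\chi(x)-1)\nu(dx)$ is finite. This follows from a Taylor expansion of $\chi(x)-1$ near the origin combined with the standing property $\int\|x\|^2 1_D(x)\nu(dx)<\infty$ of any L\'{e}vy measure, and this integrability is precisely what the construction of the exponential family presupposes through the condition $Cu(\chi)<\infty$. A secondary concern is that recovering $\nu'$ from knowledge of its Laplace exponents requires a separating class of test semi-characters; this is supplied by the exponential ones on the open set where $Cu$ is finite, together with the standard uniqueness theorem for the L\'{e}vy--Khintchine representation.
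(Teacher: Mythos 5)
Your argument is essentially correct, but it is a genuinely different route from the paper's: the paper does not prove this lemma at all, it simply cites \citet[Theorem 2]{kuchler1981analytical} for $d=1$ and \citet[ch.~11, \S 5]{kuchler2006exponential} for $d\geq 1$, whereas you reconstruct the standard Esscher-transform computation. Your workhorse identity $Cu_{\nu'}(\chi')=Cu_\nu(\chi\otimes\chi')-Cu_\nu(\chi)$ is exactly right, the splitting of the jump part into a L\'evy--Khintchine term with measure $\chi\nu$ plus a $\theta'$-linear drift correction is the classical argument, and the finiteness of $\int \lVert x\rVert\, 1_D(x)\lvert\chi(x)-1\rvert\,\nu(dx)$ does follow from $\chi(x)-1=O(\lVert x\rVert)$ near the origin together with $\int\min(\lVert x\rVert^2,1)\,\nu(dx)<\infty$. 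What your write-up buys is a self-contained proof where the paper offers only a pointer; what it costs is that the lemma is stated for an arbitrary semi-character on $(\overline{\mathbb{R}}^d,*)$, and your proof is really two separate proofs for the two concrete families the paper uses. In particular, for the filter case the sentence ``the same identity applies directly'' glosses over the fact that there $*$ is a componentwise min/max, $\wedge_{s\leq t}\Delta X_s$ is a record statistic rather than a sum, and the L\'evy--Khintchine uniqueness machinery is not what does the work: the correct mechanism is the product identity $1_L(\wedge_{s\leq t}\Delta X_s)=\prod_{s\leq t}1_L(\Delta X_s)$ from the paper's equation (\ref{eqfilter}), which shows the tilting is an independent thinning of the Poisson jump measure and hence multiplies $\nu$ by $1_L$. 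You should make that step explicit rather than appealing to the exponential computation. Two cosmetic points: the Gaussian cross term with the paper's normalization $\langle\gamma y,y\rangle$ contributes $2\gamma\theta$ rather than $\gamma\theta$ to the new drift (harmless for the conclusion about $\nu'$), and the separating-class issue you flag at the end is indeed the standard way to close the argument, so it is a remark, not a gap.
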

\begin{proof}
	See \citet[Theorem 2]{kuchler1981analytical} for the one-dimensional result and \citet[chapter 11, section 5]{ kuchler2006exponential} for $d \geq 1$.
\end{proof}

\begin{lemma}
\label{lemMSuf}
For a given semi-character $\chi$, $\{t \mapsto \mu_{t}^{\chi}: t \geq 0\}$ forms a convolution semi-group. If $(L(\overline{\mathbb{R}}^{d}, \mathbb{R}^{+}), \otimes)$ separates points, then the application $(\Delta X_s, s \leq t) \mapsto (\wedge_{s \leq t}\Delta X_s)$ is a minimal sufficient statistic.
\end{lemma}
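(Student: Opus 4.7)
The plan is to treat the two assertions separately. For the convolution semigroup claim, I would first note that the base family $\{\mu_t\}$ is already a $*$-convolution semigroup: since $X$ is L\'evy, its jumps on disjoint intervals are independent and stationary, so
\[
\wedge_{s\leq s_1+s_2}\Delta X_s \;\stackrel{d}{=}\; (\wedge_{s\leq s_1}\Delta X_s) * (\wedge_{s_1<s\leq s_1+s_2}\Delta X_s),
\]
i.e.\ $\mu_{s_1+s_2}=\mu_{s_1}*\mu_{s_2}$. To promote this to $\mu^\chi$, I would pair a bounded measurable test function $f$ against $\mu_{s+t}^\chi$, substitute the density $\chi(\cdot)\exp(-(s+t)Cu(\chi))$ against $\mu_{s+t}$, and expand via $\mu_s*\mu_t$. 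The homomorphism identity $\chi(x*y)=\chi(x)\chi(y)$ together with $(s+t)Cu(\chi)=sCu(\chi)+tCu(\chi)$ reassembles the integrand into $d\mu_s^\chi(x)\,d\mu_t^\chi(y)$, which yields $\mu_{s+t}^\chi=\mu_s^\chi*\mu_t^\chi$.

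For sufficiency of $T=\wedge_{s\leq t}\Delta X_s$, I would invoke the Fisher--Neyman / Halmos--Savage factorization criterion. The Radon--Nikodym derivative $d\mu_t^\chi/d\mu_t$ fixed in the definition of the exponential family equals $\chi(x)\exp(-tCu(\chi))$ at $x=T$, so the likelihood on the path space depends on the trajectory only through its terminal composition; this gives sufficiency of $T$ for the parametric family $\{\mu^\chi:\chi\in L(\overline{\mathbb{R}}^d,\mathbb{R}^+)\}$.

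For minimality, I would use the standard characterization that a sufficient statistic is minimal iff its fibres coincide with the level sets of the likelihood ratios across the parameter family. In our setting, the density of a trajectory with terminal composition $x$ is proportional to $\chi(x)$ (for each fixed $\chi$), so two trajectories yield the same likelihood ratios under every pair $(\chi_1,\chi_2)$ iff $\chi(x_1)=\chi(x_2)$ for every semi-character $\chi$; the hypothesis that $(L(\overline{\mathbb{R}}^d,\mathbb{R}^+),\otimes)$ separates points then forces $x_1=x_2$, so the fibres of $T$ are exactly the likelihood equivalence classes. The main obstacle I foresee is technical rather than conceptual: one must justify the factorization/minimality criteria on the abstract filtered path space $(\Omega,\mathcal{F},\{\mathcal{F}_t\})$ of compositions, e.g.\ by identifying $\mathcal{F}_t$ with the $\sigma$-algebra generated by the running composition, so that sufficiency of the terminal value is really a statement about the whole trajectory up to time $t$ rather than about a single observation.
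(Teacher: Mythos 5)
Your proposal is correct and follows essentially the same route as the paper: the multiplicativity $h_{t+s}(x*y)=h_t(x)h_s(y)$ (from the homomorphism property of $\chi$ and additivity of $tCu(\chi)$) gives the convolution semigroup, the factorization of the density through the terminal composition gives sufficiency, and the Lehmann likelihood-ratio criterion combined with the separation-of-points hypothesis gives minimality. Your closing remark about identifying $\mathcal{F}_t$ with the $\sigma$-algebra generated by the running composition is a reasonable extra precaution that the paper leaves implicit.
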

\begin{proof}
The family $t \mapsto \mu_{t}$ forms a convolution semi-group: 
$$
\mu_{t+s}=\mu_{t} \mu_{s},  t,s \geq 0.
$$
In addition, for $x, y \in L$, the density $h_{t}$ verifies the rule $h_{t+s}(x*y)=h_{t}(x)h_{s}(y)$ which makes the family $\{\mu_{t}^{\chi}: t \ge 0\}$  a convolution semi-group. By definition of the exponential model,  the application $(\Delta X_s, s \leq t) \mapsto \wedge_{s \leq t} \Delta X_s$ is a sufficient statistic. If  $(L(\overline{\mathbb{R}}^{d}, \mathbb{R}^{+}), \otimes)$ separates points, the Laplace transform is injective and all the information about $\chi$  can be recovered by knowing $\exp(tf(\chi))$. In this case, according to the \cite{lehman1950}'s criterion,  the application $(\Delta X_s, s \leq t) \mapsto \wedge_{s \leq t}\Delta X_s$ is minimal sufficient: 
$$
\frac{h_t(x)}{h_t(y)}=\frac{\chi(x)}{\chi(y)}=c,\,\forall{\chi} \iff x=y \text{ if } (L(\overline{\mathbb{R}}^{d}, \mathbb{R}^{+}), \otimes) \text{ separates points.}
$$
\end{proof}
\citet{kuchler1989exponential3} showed that the exponential family is extremal among the models containing the initial L\'{e}vy process and having the last observation as the sufficient statistic. Even more, they constitute the minimal part of the Martin boundary of the initial L\'{e}vy process. As a result,  some convex classes of infinitely divisible distributions are mixtures of exponential families. An example of such mixtures is the mixture  $\int_{[0,t]} f(s)dX_{s}$ where 
$f(s)$, $s \in [0,t]$, is a random variable on $[0,\infty)$ with distribution $\tau_{s}$ and $X$ is a L\'{e}vy process with a cumulant function $Cu$ satisfying $\int |Cu(f(s)y)|ds <\infty$ for almost all $y$. \citet{goldie1967class} studied the degenerate case $f(s)=Y 1_{s_{0}}(s)$,	 $X_{s_0}$ exponentially-distributed, \citet{steutel1967note} studied discrete mixtures with $f(s)$ deterministic and $X_{s}$ exponentially-distributed,$s>0$. \citet{bondesson1981classes, barndorff2006some} studied the general case of a deterministic $f$ respectively in dimension $d=1$ and $d>1$.
\section{Exponential families generated by filters}
\subsection{The law of the upper and lower record of the jumps}
In this section, the operation $*$ is a maximum or a minimum  defined on a set  partially ordered by the relation $\leq$:
$$
x*y=x	\iff  x \leq y,  \text{ or }  x*y=x  \iff y \leq x.  
$$
On the semigroup $(\overline{\mathbb{R}}^{d},*)$, we consider a semi-character of the form $1_{L}$:
\begin{equation*}
	1_{L}(x) = \begin{cases}
		1 \text{ if } x \in L, \\
		0 \text{ if } x \in L^c,
	\end{cases}	
\end{equation*}
where  $L \subset \overline{\mathbb{R}}^{d}$ is  a filter. The structure on the semigroup $(\overline{\mathbb{R}}^{d},*)$ depends strongly on the definition of the order relation $\leq$.

\begin{definition}
A partially ordered set (poset) $L \subset \mathbb{R}^{d}$ is a nonempty set  equipped with a transitive, reflexive and anti symmetric relation $ \leq$: for $x,y,z \in L$,
	\begin{itemize}
		\item [(i)] $x \leq y$ and $y \leq x$ imply $x=y$;
		\item [(ii)] $x \leq x$;
		\item [(iii)] $x \leq y$ and $y \leq z$ imply $x \leq z$;
	\end{itemize}
	
A lattice is a poset in which every nonempty finite subset has a greatest lower bound (infimum) and a least upper bound(supremum). The lattice $L$ is called a complete lattice if the infimum and supremum exist  not only for nonempty finite subsets but also for all nonempty subsets. 
\end{definition}

\begin{example}[Product order on $\overline{\mathbb{R}}^{d}$]
The product order $\leq $ on  $\overline{\mathbb{R}}^{d}$ is the componentwise order such that $(x_{1}, \ldots, x_{d}) \leq (y_{1}, \ldots, y_{d})$ if and only if $x_{1} \leq y_{1}$, $\ldots$, $x_{d} \leq y_{d}$. The graph $G$ of the relation $\leq$ is defined as the set of pairs $(x,y)$ such that $x \leq y$,
\begin{align*}
G&=\{(x,y)\in \overline{\mathbb{R}}^{d} \times \overline{\mathbb{R}}^{d}: x=(x_1, \ldots,x_d), y=(y_1, \ldots, y_d), x_i \leq y_i, 1 \leq i \leq d\}\\
&=\{(x_1,y_1) \in \overline{\mathbb{R}} \times \overline{\mathbb{R}}: x_1 \leq y_1\}^{d}.
\end{align*}
The graph  $G$ is product-measurable (measurable for the product $\sigma-$algebra) and is closed. The $x-$sections of $G$, $\{y: x \leq y\}=[x, +\infty]$,  $x \in \overline{\mathbb{R}}^{d}$ are filters for the semigroup $(\overline{\mathbb{R}}^{d}, \max)$ where $\max$ is the component-wise maximum.
\end{example}

\begin{definition}
Assume that $L=J(l_1) \times \cdots \times J(l_d)$, $l=(l_1, \ldots, l_{d})\in \overline{\mathbb{R}}^{d}$ with
$$
J(l_i)=[l_i, +\infty] \text{ if  }  l_i \geq 0 \text{ and } J(l_i)=[-\infty, l_i] \text{ if  }  l_i<0, i=1, \ldots, d.
$$
The relation   $\leq $ (respectively $\geq$ )  is defined on $L$  as the componentwise ``lower or equal" (respectively ``greater or equal" )  for positive values and the componentwise ``greater or equal" (respectively ``lower or equal" )  for negative values:	
\begin{align*}
x, y \in L,  x \leq y \iff  x_i \leq y_i \text{ for } i \text{ such that } l_i \geq 0 \text{ and }  y_i \leq x_i \text{ for } i&  \text{ such that } l_i < 0, \\
&
i \in \{1, \ldots,d\};
\end{align*}
(respectively
$$
 x \geq y \iff x_i \geq y_i \text{ for } i \text{ such that } l_i \geq 0 \text{ and }  y_i \geq x_i \text{ for } i \text{ such that }  l_i < 0,  i \in \{1, \ldots,d\}
$$
).
\end{definition}
\begin{lemma}
$(L, \leq )$ and $(L, \geq )$ are lattices.
\end{lemma}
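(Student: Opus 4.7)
The proof plan is to reduce the claim to the elementary fact that a finite product of chains is a lattice, being careful about the sign-dependent definition of $\leq$ on $L$.

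First I would verify that $\leq$ (as defined on $L$) is indeed a partial order. Reflexivity, antisymmetry and transitivity each reduce coordinatewise to the corresponding property of the usual order $\leq_{\mathbb{R}}$ on $\overline{\mathbb{R}}$: for coordinates $i$ with $l_i\geq 0$ this is just $\leq_{\mathbb{R}}$ restricted to $[l_i,+\infty]$, and for coordinates with $l_i<0$ this is $\leq_{\mathbb{R}}$ restricted to $[-\infty,l_i]$ with direction reversed. Each of these is still a total order on the interval $J(l_i)$, so $(J(l_i),\leq)$ is a chain, and in particular a complete lattice.

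Next I would exhibit infima and suprema in $(L,\leq)$ explicitly. For $x,y\in L$, set
\[
(x\wedge y)_i=\begin{cases}\min(x_i,y_i)&\text{if }l_i\geq 0,\\ \max(x_i,y_i)&\text{if }l_i<0,\end{cases}
\qquad
(x\vee y)_i=\begin{cases}\max(x_i,y_i)&\text{if }l_i\geq 0,\\ \min(x_i,y_i)&\text{if }l_i<0,\end{cases}
\]
where $\min$ and $\max$ are in the usual sense on $\overline{\mathbb{R}}$. These candidates lie in $L$: in a coordinate with $l_i\geq 0$ we have $x_i,y_i\geq l_i$ so $\min(x_i,y_i)\geq l_i$ and $\max(x_i,y_i)\geq l_i$; in a coordinate with $l_i<0$ we have $x_i,y_i\leq l_i$ so both $\min$ and $\max$ stay $\leq l_i$. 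By construction each coordinate is the meet (resp.\ join) in the chain $(J(l_i),\leq)$, and because the order on $L$ is the product order of these chains, $x\wedge y$ and $x\vee y$ are the greatest lower bound and least upper bound in $(L,\leq)$. Finite meets and joins then follow by induction, proving $(L,\leq)$ is a lattice.

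Finally, for $(L,\geq)$ I would observe that $\geq$ is precisely the dual order of $\leq$: coordinatewise, flipping $\leq_{\mathbb{R}}$ and $\geq_{\mathbb{R}}$ in the definition exchanges the two. Hence the meet (resp.\ join) in $(L,\geq)$ is the join (resp.\ meet) in $(L,\leq)$, and the lattice property transfers immediately. If one wishes to spell this out, the same formulas as above work with the roles of $\wedge$ and $\vee$ swapped.

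The only real care point, and what I expect to be the main (minor) obstacle, is bookkeeping the sign cases: one must not confuse the abstract order $\leq$ on $L$ with the real-line order $\leq_{\mathbb{R}}$ when $l_i<0$, since in that case the two go opposite ways. Beyond that, the argument is a routine reduction to the product of two chains.
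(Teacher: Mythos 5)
Your proof is correct and follows essentially the same route as the paper: verify the partial-order axioms coordinatewise and exhibit the componentwise (sign-adjusted) meet and join, which stay in $L$. Your version is somewhat more careful than the paper's — you make explicit the sign-dependent formulas for $x\wedge y$ and $x\vee y$ and handle $(L,\geq)$ by order duality, where the paper simply asserts $\min(x,y),\max(x,y)\in L$ and says the second case is similar.
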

\begin{proof}
	The relation $\leq$ is reflexive: for any $x \in L$, $x \leq x$; 	the relation $\leq$ is antisymmetric (respectively transitive): if $x \leq y$ and $y \leq  x$ (respectively $y \leq z$) then $x=y$ (respectively $x\leq z$) since $x_i \leq y_i$ and $y_i \leq  x_i$ (respectively $y_i \leq  z_i$ ) implies that $x_i=y_i$ (respectively $x_i \leq  z_i$ )  for $i$ such that  $l_i \geq 0$ or $l_i<0$. Moreover, for any $x,y \in L$, $\min(x,y)\in L$ and $\max(x,y)\in L$. The proof for $(L, \geq )$ uses similar arguments.
\end{proof}
\begin{definition}
We recall that $L=J(l_1) \times \cdots \times J(l_d)$, $l=(l_1, \ldots, l_{d})\in \overline{\mathbb{R}}^{d}$ with
	$J(l_i)=[l_i, +\infty]$  if  $l_i \geq 0$ and $J(l_i)=[-\infty, l_i]$ if $l_i<0$, $i=1, \ldots,d$. Let us define on $L$ the operation $*$  and the corresponding relation $\leq $(respectively $\tilde{*}$ and the corresponding relation $\geq$ ) :	
$$
x, y \in L,  x*y=z  \iff z=(z_1, \ldots,z_d) \text{ and for } i \in \{1, \ldots,d\},
$$
\begin{equation*}
	z_{i} = \begin{cases}
		\min(x_{i}, y_{i}) \text{ if } l_i \geq 0, \\
		\max(x_{i}, y_{i}) \text{ if } l_i < 0,
	\end{cases}	
\end{equation*}
and
$$
 x \leq y \iff x*y=x;
$$
(respectively 
$$
x, y \in L, x\tilde{*}y=z  \iff z=(z_1, \ldots,z_d) \text{ and for } i \in \{1, \ldots,d\},
$$
\begin{equation*}
	z_{i} = \begin{cases}
		\max(x_{i}, y_{i}) \text{ if } l_i \geq 0, \\
		\min(x_{i}, y_{i}) \text{ if } l_i < 0,
	\end{cases}	
\end{equation*}
and
$$
x \geq y \iff x \tilde{*}y=y
$$
). \\
We will call  $\mu_{t}$ the distribution of $\wedge_{s \leq t} \Delta X_{s}$ and $\tilde{\mu}_{t}$ the distribution of $\vee_{s\leq t} \Delta X_{s}$, $t>0$ with
$$
\wedge_{s \in \{s_1,s_2\}} \Delta X_{s}=\Delta X_{s_1}*\Delta X_{s_2}
$$
and
$$
\vee_{s \in \{s_1,s_2\}} \Delta X_{s}=\Delta X_{s_1}\tilde{*}\Delta X_{s_2}.
$$
The statistics $\wedge_{s\leq t} \Delta X_{s}$ and  $\vee_{s\leq t} \Delta X_{s}$ are called respectively the lower and upper records. They are respectively the running minimum and the running maximum.
\end{definition}
\begin{lemma}
For any filter $L$, let us call $\mathring {L}$ the interior of $L$ and $L^{c}$ the set $\overline{\mathbb{R}}^{\epsilon_1}\times \cdots \times \overline{\mathbb{R}}^{\epsilon_d}\setminus L$ where $L=J(l_1)\times \cdots \times J(l_d)$, $\epsilon_i=+$ if $l_i \geq 0$, $\epsilon_i=-$ if $l_i< 0$; we have:
\begin{align}
\label{eqfilter}
1_{L}(\wedge_{s \leq t} \Delta X_s)&=\prod_{s \leq t} 1_{L}(\Delta X_s), \nonumber \\
	\\
1_{\mathring {L}^{c}}(\vee_{s \leq t} \Delta X_s)&=1_{\overline{\mathbb{R}}^{\epsilon_1}\times \cdots \overline{\mathbb{R}}^{\epsilon_d}\setminus\mathring {L}}(\vee_{s \leq t} \Delta X_s)=\prod_{s \leq t} 1_{\mathring {L}^{c}}(\Delta X_s).\nonumber
\end{align}
If  $L_1, L_2$ are filters, $\mathring{L}_{1}$,$\mathring{L}_{2}$,  $\mathring{L}_{1} \cap  \mathring{L}_{2}$ are also filters.
\end{lemma}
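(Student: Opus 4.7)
My plan is to observe that both indicator identities are instances of a single structural fact: for any set $A\subset\overline{\mathbb{R}}^d$ and binary operation $\odot$, one has $1_A(x\odot y)=1_A(x)\,1_A(y)$ for all $x,y$ exactly when $A$ is a filter for $\odot$---the $\Rightarrow$ direction is the subsemigroup axiom~(i), and the $\Leftarrow$ direction is condition~(ii) of the filter definition. Extending from two operands to the (at most countable) family of jumps $\{\Delta X_s:s\leq t\}$ is then a routine induction, so the first identity is immediate from the hypothesis that $L$ is a filter for $*$.

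For the second identity I would show that $\mathring{L}^c$ is itself a filter for $\tilde{*}$. Exploiting the product shape $L=J(l_1)\times\cdots\times J(l_d)$, I read $\mathring{L}^c$ componentwise as $\prod_i\mathring{J(l_i)}^c$, where the $i$th component equals $[0,l_i]$ when $l_i\geq 0$ and $[l_i,0]$ when $l_i<0$. Since $\tilde{*}$ acts coordinatewise as $\max$ when $l_i\geq 0$ and as $\min$ when $l_i<0$, each interval is preserved, and both filter axioms reduce to the one-dimensional observation that $\max(x_i,y_i)\leq l_i$ iff $x_i,y_i\leq l_i$ (and analogously for $\min$). The same componentwise bookkeeping establishes the final claim: $\mathring{L}_1,\mathring{L}_2$ are filters because each is again a product of one-sided intervals (now open at the finite endpoint), and---provided $L_1$ and $L_2$ share the sign pattern $(\epsilon_1,\ldots,\epsilon_d)$, so a common $*$ is defined on both---their intersection is $\prod_i\bigl(\mathring{J(l_{1,i})}\cap\mathring{J(l_{2,i})}\bigr)$, again a product of intervals of the same form to which the preceding verifications apply.

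The main obstacle is interpretive. Read literally, $\mathring{L}^c=\overline{\mathbb{R}}^{\epsilon_1}\times\cdots\times\overline{\mathbb{R}}^{\epsilon_d}\setminus\mathring{L}$ is the set-theoretic complement of the product $\mathring{L}$, which is not itself a product; e.g.\ with $d=2$ and $L=[1,\infty]^2$ the jumps $(2,0),(0,2)$ both lie in the literal complement while their coordinatewise maximum $(2,2)$ does not. The argument therefore depends on reading the complement coordinatewise, consistent with the rectangular shape of $L$; once that convention is fixed, the remainder of the lemma reduces to the elementary one-dimensional checks above.
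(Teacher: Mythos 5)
For the first identity and for the closure properties of filters, your argument coincides with the paper's: the paper likewise reduces $1_{L}(\wedge_{s \leq t} \Delta X_s)=\prod_{s \leq t} 1_{L}(\Delta X_s)$ to the two filter axioms (``by definition of a filter, the proposition is true''), and handles $\mathring{L}_1$, $\mathring{L}_2$, $\mathring{L}_{1}\cap\mathring{L}_{2}$ by noting that the interior of a rectangle $J(l_1)\times\cdots\times J(l_d)$ is again a rectangle of the same shape and that intersections of filters are filters. Your additional care about extending from two operands to countably many jumps, and about the two filters sharing a sign pattern so that a common $*$ is defined, is welcome but not a substantive departure.

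The divergence is on the second identity, and the ``interpretive obstacle'' you flag is in fact a genuine defect of the statement, not merely of its reading. The paper does not argue, as you do, that $\mathring{L}^{c}$ is a filter for $\tilde{*}$; it asserts instead that, because $\mathring{L}$ is an open upper set, $\vee_{s\leq t}\Delta X_s\in\mathring{L}$ if and only if some $\Delta X_s\in\mathring{L}$. Only the ``if'' direction follows from the upper-set property. Your example with $L=[1,\infty]^{2}$ and jumps $(2,0)$, $(0,2)$ refutes the ``only if'' direction for $d\geq 2$ exactly as it refutes the literal identity $1_{\mathring{L}^{c}}(\vee_{s\leq t}\Delta X_s)=\prod_{s\leq t}1_{\mathring{L}^{c}}(\Delta X_s)$: the componentwise maximum $(2,2)$ lies in $\mathring{L}$ although neither jump does, so the left-hand side is $0$ while the right-hand side is $1$. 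Openness repairs this only when $d=1$. So the paper's own proof contains the gap you identified. Be aware, though, that your proposed repair --- reading $\mathring{L}^{c}$ as the product $\prod_i \mathring{J}(l_i)^{c}$ --- changes the content of the lemma: the event $\{\vee_{s\leq t}\Delta X_s\in\prod_i \mathring{J}(l_i)^{c}\}$ is ``no jump leaves the lower orthant,'' not ``no jump enters $\mathring{L}$,'' and the subsequent lemma on the hitting functional, which derives $\tilde{\mu}_{t}(\mathring{L})=1-\exp(-t\nu(\mathring{L}))$ from precisely the refuted equivalence, would no longer follow. Your diagnosis is correct, but the fix cannot be adopted silently: either the statement must be weakened to the product reading with the consequences propagated downstream, or the equality must be replaced by the single implication that actually holds.
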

\begin{proof}
If  $L_i$ is a filter, it is of the form $Jl_{i1})\times \cdots \times J(l_{id})$ and the interior of $L_i$ is a filter, $i=1,2$. Since the intersection of two filters is a filter, if  $L_1, L_2$ are filters then $\mathring{L}_{1} \cap  \mathring{L}_{2}$ is also filter.

By definition of a filter, the proposition $1_{L}(\wedge_{s \leq t} \Delta X_s)=\prod_{s \leq t} 1_{L}(\Delta X_s)$ is true. It is equivalent to the following assertion:
$$
\wedge_{s \leq t} \Delta X_s \in L \iff \Delta X_s \in L \text { for } s \leq t.
$$	
Similarly,  the equivalence
$$
\vee_{s \leq t} \Delta X_s \notin \mathring {L} \iff \Delta X_s \notin \mathring {L} \text { for } s \leq t
$$
 is true because since $\mathring {L}$ is an upper set (for the relation $\geq$) and is open, there exists $s \leq t$ such that $\Delta X_s$ is in $\mathring {L}$ if and only if $\vee_{s \leq t} \Delta X_s$ is in $\mathring {L}$.
\end{proof}
Let us call $\mu_{t}$ the distribution of $\wedge_{s\leq t} \Delta X_{s}$ and  $\tilde{\mu}_{t}$ the distribution of $\vee_{s\leq t} \Delta X_{s}$.  Let us divide the time axis into time spans $[0,t]$, $(t,2t]$, $\ldots$, $((n-1)t,nt]$, $\ldots$,  and note $X_{n}(t)=\wedge_{(n-1)t <s \leq nt}\Delta X_{s}$ (resp. $\tilde{X}_{n}(t)=\vee_{(n-1)t <s \leq nt}\Delta X_{s}$ ), $n \geq 1$. Let us define multivariate records compatible with the filters in the following way:
\begin{itemize}
\item [(i)] $X_{n}(t)=(X_{n,1}(t), \ldots,X_{n,d}(t))$ is a record if, for $k<n$, $X_{k}(t) \notin J(X_{n,1}(t)) \times\cdots \times J(X_{n,d}(t))$ i.e $X_{n}(t)$ is a record if and only if $X_{k}(t) \ngeq X_{n}(t) $, $k <n$;
\item [(ii)] $\tilde{X}_{n}(t)=(\tilde{X}_{n,1}(t), \ldots,\tilde{X}_{n,d}(t))$ is a record if, for $k<n$, $\tilde{X}_{k}(t) \in \mathring{J}(\tilde{X}_{n,1}(t)) \times\cdots \times \mathring{J}(\tilde{X}_{n,d}(t))$ i.e $X_{n}(t)$ is a record if and only if $X_{k}(t) \geq X_{n}(t)$, $k <n$, the inequality being strict for at least one component.
\end{itemize}

 We will call $X_{n}(t)$ a lower record and $\tilde{X}_{n}(t)$ an upper record. Given the lower record process (respectively the upper record process), let us call $N_{t}$ (respectively $\tilde{N}_{t}$) the point process  giving the number of lower records (upper records) that fall in a measurable set. Let us call 
$H_{t}$ (resp. $\tilde{H}_{t}$) the hazard measure defined for any filter $L$ by
\begin{align*}
H_{t}(L)&=-\log(\mu_{t}^{c}(L))+\sum_{l \in D_{\mu_{t}}  \cap L}
\frac{\mu_{t}(l)}{\mu_t(J(l_1)\times \cdots \times J(l_d))}\\
&=\int_{L \cap S_{\mu_{t}}}\frac{1}{\mu_t(J(l_1)\times \cdots \times J(l_d))}\mu_{t}(dl),
\end{align*}
(resp.
\begin{align*}
\tilde{H}_{t}(\mathring{L})&=-\log(1-\tilde{\mu}_{t}^{c}(\mathring{L}))+\sum_{l \in D_{\tilde{\mu}_{t}} \cap \mathring{L} }\frac{\tilde{\mu}_{t}(l)}{1-\tilde{\mu}_{t}(\mathring{ J}(l_1)\times \cdots \times \mathring{J}(l_d))}\\
&=\int_{L \cap S_{\tilde{\mu}_{t}}}\frac{1}{1-\tilde{\mu}_{t}(\mathring{ J}(l_1)\times \cdots \times \mathring{J}(l_d))}\tilde{\mu}_{t}(dl),
\end{align*}
)
where $S_{\mu_{t}}$ (resp. $S_{\tilde{\mu}_{t}}$ ) is the support of $\mu_{t}$
(resp. $\tilde{\mu}_{t}$ ), $D_{\mu_{t}}$ (resp. $D_{\tilde{\mu}_{t}}$ ) is the set of atoms of $\mu_{t}$
(resp. $\tilde{\mu}_{t}$ ), $\mu_{t}^{c}$ (resp. $\tilde{\mu}_{t}^{c}$ ) is the continuous part of  $\mu_{t}$
(resp. $\tilde{\mu}_{t}$ ), $t>0$. We can extend the definition of  $H_{t}$ (resp. $\tilde{H}_{t}$) to  the sigma-algebra generated by the filters as follows: for any measurable set $A$, we have 
$$
H_{t}(A)=\int_{A \cap S_{\mu_{t}}}\frac{1}{\mu_t(J(l_1)\times \cdots \times J(l_d))}\mu_{t}(dl),
$$
(resp.
$$
\tilde{H}_{t}(A)=\int_{A \cap S_{\tilde{\mu}_{t}}}\frac{1}{1-\tilde{\mu}_{t}(\mathring{ J}(l_1)\times \cdots \times \mathring{J}(l_d))}\tilde{\mu}_{t}(dl)
$$
), $t>0$. The component-wise supremum of the jumps of a L\'{e}vy process (for the product order) is known to be max-infinitely divisible (max-id) \citep{falk200830}.  A cdf $G$ is max-id if there exist a measure $\nu$ such that $G(x)=\exp(-\nu(\mathbb{R}^{d}\setminus(-\infty,x]))$, $x \in \mathbb{R}^{d}$  \citep{balkema1977max}. The existence of the exponent measure is implied by the following characterization.
 \begin{definition}
 	A distribution function $G$ on $\mathbb{R}^{d}$ is max-infinitely divisible (max-id) if, for every integer $k>1$, there exists a cdf $G_{k}$  such that $G(x)=(G_{k}(x))^{k}$ for every $x \in\mathbb{R}^{d}$.
 	Similarly, a survival function $G$ on $\mathbb{R}^{d}$ is said to be  min-infinitely divisible (min-id) if for every $k>1$ there exists a survival function $G_{k}$ on $\mathbb{R}^{d}$ such that $G(x)=(G_{k}(x))^{k}$ for every $x \in\mathbb{R}^{d}$.
 \end{definition}
In the present setting, the supremum for the relation $\leq$ is not exactly the component-wise supremum: the supremum corresponds to the component-wise supremum only for positive values.  The lemma \ref{exponent} shows that $\nu$ is the exponent measure for the restriction of $\mu_t$ and $\tilde{\mu}_t$ on filters. Before stating this lemma, let us introduce the hitting functional and the avoidance functional.

\begin{definition}[Hitting functional and avoidance functional]
The probability that  none of the jumps  of the L\'{e}vy process with L\'{e}vy measure $\nu$, during the period $[0,t]$, falls in a measurable set $A$ is called the avoidance functional and is noted $Q(A)=\exp(-t\nu(A))$. 	The probability that the jumps of the L\'{e}vy process hit a set $A$ during the time $[0,t]$ is called the hitting functional and is noted $T(A)=1-\exp(-t\nu(A))$.
\end{definition}
\begin{lemma}
\label{exponent}
For any filter $L$, we recall that $L^{c}$ is the set $\overline{\mathbb{R}}^{\epsilon_1}\times \cdots \times \overline{\mathbb{R}}^{\epsilon_d}\setminus L$ where $L=J(l_1)\times \cdots \times J(l_d)$, $\epsilon_i=+$ if $l_i \geq 0$, $\epsilon_i=-$ if $l_i< 0$; we have	
\begin{align}
\label{eqexpo}
\mu_{t}(L^{c})&=T(L^{c})=1-\exp(-t\nu(L^{c})),\nonumber \\
 \\
\tilde{\mu}_{t}(\mathring{L})&=T(\mathring{L})=1-\exp(-t\nu(\mathring{L})). \nonumber	
\end{align}
 The family of exponential  distributions associated with  $\mu_{t}$, $t>0$ (respectively  $\tilde{\mu}_{t}$, $t>0$) has as density function the hazard function
$$
h_t(x)=\frac{1}{\mu_{t}(L)}1_{L}=\exp(t\nu(L^{c}))1_{L}
$$
(respectively
$$
\tilde{h}_t(x)=\frac{1}{\tilde{\mu}_{t}(\mathring{L}^{c})}1_{\mathring{L}^{c}}=\exp(t\nu(\mathring{L}))1_{\mathring{L}^{c}}
$$
).
\end{lemma}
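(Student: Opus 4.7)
The plan is to reduce both identities in (\ref{eqexpo}) to probabilities of jump counts of the underlying L\'{e}vy process, and then to read off the densities directly from the definition of the exponential family given in Section~2.3.

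First I would invoke equation (\ref{eqfilter}) from the preceding lemma. For the semigroup operation $*$ associated with the filter $L$, the identity $1_{L}(\wedge_{s \leq t} \Delta X_s) = \prod_{s \leq t} 1_{L}(\Delta X_s)$ is equivalent to the event equality $\{\wedge_{s \leq t} \Delta X_s \in L\} = \{\Delta X_s \in L \text{ for every } s \leq t\}$. Complementing, $\{\wedge_{s \leq t} \Delta X_s \in L^{c}\}$ coincides with the event that at least one jump of $X$ during $[0,t]$ falls in $L^{c}$. Since the jump measure of a L\'{e}vy process is a Poisson random measure on $(0,t]\times \overline{\mathbb{R}}^{d}$ with intensity $ds \otimes \nu(dx)$, the number of jumps falling in the Borel set $L^{c}$ during $[0,t]$ is Poisson distributed with parameter $t\nu(L^{c})$ (allowing $+\infty$, with the convention $e^{-\infty}=0$). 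Hence $\mu_t(L) = P(\text{no jump in } L^{c}) = \exp(-t\nu(L^{c}))$, so
\[
\mu_{t}(L^{c}) = 1 - \exp(-t\nu(L^{c})) = T(L^{c}),
\]
which is the first half of (\ref{eqexpo}).

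For the upper record case, I would apply the second identity of (\ref{eqfilter}): $1_{\mathring{L}^{c}}(\vee_{s \leq t}\Delta X_s) = \prod_{s \leq t} 1_{\mathring{L}^{c}}(\Delta X_s)$, which translates to $\{\vee_{s \leq t}\Delta X_s \notin \mathring{L}\} = \{\text{no jump in } \mathring{L}\}$. The same Poisson random measure argument yields $\tilde{\mu}_t(\mathring{L}^{c}) = \exp(-t\nu(\mathring{L}))$ and therefore $\tilde{\mu}_t(\mathring{L}) = 1 - \exp(-t\nu(\mathring{L})) = T(\mathring{L})$, proving the second half of (\ref{eqexpo}).

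Finally, the density statement follows from unrolling the general definition in Section~2.3. Taking $\chi = 1_{L}$ in the exponential family construction, the cumulant is identified by
\[
\exp(t\, Cu(1_{L})) = E\bigl[1_{L}(\wedge_{s\leq t}\Delta X_s)\bigr] = \mu_{t}(L) = \exp(-t\nu(L^{c})),
\]
so $Cu(1_{L}) = -\nu(L^{c})$, and therefore $h_t(x) = 1_{L}(x)\exp(-t\,Cu(1_{L})) = \exp(t\nu(L^{c}))\,1_{L}(x)$. The symmetric computation with $\chi = 1_{\mathring{L}^{c}}$ produces $\tilde{h}_t(x) = \exp(t\nu(\mathring{L}))\,1_{\mathring{L}^{c}}(x)$. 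The main obstacle in the plan is making the reduction to counting jumps rigorous when $\nu$ has infinite mass near the origin: one has to verify that (\ref{eqfilter}) still yields a well-defined product over the a.s.\ countable set of jump times, and that the Poisson count of jumps inside $L^{c}$ (or $\mathring{L}$) is finite almost surely exactly when $\nu(L^{c})<\infty$ (resp.\ $\nu(\mathring{L})<\infty$), the infinite case giving $\mu_{t}(L)=0$ (resp.\ $\tilde{\mu}_{t}(\mathring{L}^{c})=0$) in accordance with the stated formulas.
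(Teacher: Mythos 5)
Your proposal is correct and follows essentially the same route as the paper: both reduce the two identities to the statement that the running minimum lies in $L$ (resp.\ the running maximum avoids $\mathring{L}$) exactly when no jump falls in $L^{c}$ (resp.\ $\mathring{L}$), evaluate that probability as the Poisson avoidance functional $\exp(-t\nu(\cdot))$, and then read the densities off as $1/\mu_{t}(L)$ and $1/\tilde{\mu}_{t}(\mathring{L}^{c})$ on the respective filters. Your explicit identification of the cumulant $Cu(1_{L})=-\nu(L^{c})$ and your remark about the infinite-mass case are slightly more detailed than the paper's presentation but do not change the argument.
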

\begin{proof}
	First, let us show that $\tilde{\mu}_{t}(\mathring{L})=T(\mathring{L})=1-\exp(-t\nu(\mathring{L}))$. If $L$ is a filter then the running maximum is not in $\mathring{L}$ if no jump falls in $\mathring{L}$ during the time span $[0,t]$. The probability that the running maximum is in the set $\mathring{L}$ is given by:
	\begin{align*}
		P(\vee_{s \leq t} \Delta X_s \in \mathring{L} )&=\tilde{\mu}_{t}(\mathring{L})\\
		&=1-P(\vee_{s \leq t} \Delta X_s \notin \mathring{L} )\\
		&=1-P(\Delta X_s\notin \mathring{L} , s \leq t)\\
		&=1-\exp(-t\nu(\mathring{L})).
	\end{align*}
On the other hand, the running minimum is in $L$ if all the jumps fall in $L$;
\begin{align*}
	P(\wedge_{s \leq t} \Delta X_s \notin L )&=\mu_{t}(L^{c})\\
	&=1-P(\wedge_{s \leq t} \Delta X_s \in L )\\
	&=1-P(\Delta X_s \in L , s \leq t)\\
	&=1-\exp(-t\nu(L^{c})).
	\end{align*}

Since $L$ and $\mathring{L}^{c}$ are filters  respectively for $(\overline{\mathbb{R}}^{d}, *)$ and $(\overline{\mathbb{R}}^{d}, \tilde{*})$  with $L=J(l_1)\times \cdots \times J(l_d)$, the exponential famlies are $\{h_{t} \mu_t, t>0\}$ and $\{ \tilde{h}_{t} \tilde{\mu}_t, t>0\}$ with
$$
h_{t}(x)=\frac{1}{\mu_{t}(L)}1_{L}=\exp(t\nu(L^{c}))1_{L},
$$

$$
\tilde{h}_{t}(x)=\frac{1}{\tilde{\mu}_{t}(\mathring{L}^{c})}1_{\mathring{L}^{c}}=\exp(t\nu(\mathring{L}))1_{\mathring{L}^{c}}.
$$
\end{proof}
\begin{lemma}
Assume that $\mu_{t}$, $\tilde{\mu}_{t}$, $t>0$, $\nu$ have no atoms. Let $N_{t}$ (resp. $\tilde{N}_{t}$ ) be  the point process  giving the number of lower records (resp. upper records) that fall in a measurable set. If $H_t$  (resp. $\tilde{H}_t$) is the hazard measure associated with $\mu_t$ (resp. associated with $\tilde{\mu}_t$), and $S_{\mu_t}$ (resp. $S_{\tilde{\mu}_t}$ ) is the support of $\mu_t$ (resp. $\tilde{\mu}_t$), $H_{t}$ (resp. $\tilde{H}_t$) verifies
\begin{align}
E(N_{t}(L^{c}))&= H_{t}(L^{c})=t\nu(S_{\mu_t} \cap L^{c}) \nonumber \\
(\text{ resp.}&\\
E(\tilde{N}_{t}(\mathring{L}))&=\tilde{H}_t(\mathring{L})=t\nu(S_{\mu_t} \cap \mathring{L}) ). \nonumber 
\end{align}
\end{lemma}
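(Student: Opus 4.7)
The plan is to treat the upper record case in detail; the lower record case follows by an entirely parallel argument. First, I would establish $E(\tilde{N}_t(\mathring{L})) = \tilde{H}_t(\mathring{L})$ by the classical i.i.d.\ record argument of \citet{goldie1989records}. The block maxima $\tilde{X}_1(t), \tilde{X}_2(t), \ldots$ are i.i.d.\ with law $\tilde{\mu}_t$, and by the definition recalled above, $\tilde{X}_n(t)=y$ is a record exactly when $\tilde{X}_k(t)\in \mathring{J}(y_1)\times\cdots\times\mathring{J}(y_d)$ for every $k<n$. Conditioning on $\tilde{X}_n(t)$, summing the resulting geometric series in $n$, and using the no-atom hypothesis to identify $\tilde{\mu}_t^c$ with $\tilde{\mu}_t$, I would obtain
\[
E(\tilde{N}_t(\mathring{L})) = \sum_{n=1}^{\infty}\int_{\mathring{L}}\tilde{\mu}_t\bigl(\mathring{J}(y_1)\times\cdots\times\mathring{J}(y_d)\bigr)^{n-1}\tilde{\mu}_t(dy)=\int_{\mathring{L}\cap S_{\tilde{\mu}_t}}\frac{\tilde{\mu}_t(dy)}{1-\tilde{\mu}_t(\mathring{J}(y_1)\times\cdots\times\mathring{J}(y_d))},
\]
which is precisely the integral expression for $\tilde{H}_t(\mathring{L})$.

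Next I would identify $\tilde{H}_t(\mathring{L})$ with $t\nu(S_{\tilde{\mu}_t}\cap\mathring{L})$ via a Palm-type identity. Lemma \ref{exponent} already supplies $1-\tilde{\mu}_t(\mathring{J}(y_1)\times\cdots\times\mathring{J}(y_d))=\exp(-t\nu(\mathring{J}(y_1)\times\cdots\times\mathring{J}(y_d)))$, so the denominator in the integral becomes the void probability of the relevant open filter. A Campbell--Mecke computation on the Poisson point process of jumps with intensity $ds\otimes\nu(dx)$ then gives the Radon--Nikodym relation
\[
\tilde{\mu}_t(dy)=\exp\bigl(-t\nu(\mathring{J}(y_1)\times\cdots\times\mathring{J}(y_d))\bigr)\,t\nu(dy)\quad\text{on }S_{\tilde{\mu}_t},
\]
the heuristic being that a jump located at $y$ is the $\vee$-supremum if and only if no other jump falls in the open filter above it, an event of probability $\exp(-t\nu(\mathring{J}(y)))$. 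Substituting this expression for $\tilde{\mu}_t(dy)$ collapses the denominator and yields $\tilde{H}_t(\mathring{L})=\int_{\mathring{L}\cap S_{\tilde{\mu}_t}}t\nu(dy)=t\nu(\mathring{L}\cap S_{\tilde{\mu}_t})$. The lower record case is strictly analogous: the same conditioning argument gives $H_t(L^c)=\int_{L^c\cap S_{\mu_t}}\mu_t(dy)/\mu_t(J(y_1)\times\cdots\times J(y_d))$, and Lemma \ref{exponent} paired with the analogous Mecke relation $\mu_t(dy)=\exp(-t\nu(L_y^c))\,t\nu(dy)$ produces $H_t(L^c)=t\nu(L^c\cap S_{\mu_t})$.

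The delicate step is the Palm/Mecke identity. In one dimension it reduces to the familiar density of the maximum (or minimum) of a Poisson sample, but in the multivariate setting with a partial order tailored to the filter $L$ and possibly infinite L\'{e}vy activity, one must first use the no-atom assumption on $\nu$ to ensure that the $*$-supremum is attained at a unique jump almost surely, and, in the infinite-activity case, truncate to filters of finite $\nu$-mass before invoking Campbell's formula and then pass to the monotone limit. The intersection with $S_{\tilde{\mu}_t}$ in the final expression is exactly what absorbs the set $\{y\in\mathring{L}:\nu(\mathring{J}(y))=\infty\}$, where $\tilde{\mu}_t$ places no mass and where no record can fall, so the identity $\tilde{H}_t(\mathring{L})=t\nu(\mathring{L}\cap S_{\tilde{\mu}_t})$ is unaffected by whether $\mathring{L}$ extends into this ``infinitely active'' part of the state space.
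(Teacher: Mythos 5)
Your proposal is correct and follows essentially the same route as the paper: the identity $E(\tilde{N}_t(\mathring{L}))=\tilde{H}_t(\mathring{L})$ via the Goldie-style geometric-series record argument, and the identification $\tilde{H}_t(\mathring{L})=t\nu(S_{\tilde{\mu}_t}\cap\mathring{L})$ by substituting $\tilde{\mu}_t(dl)=t\exp(-t\nu(\mathring{J}(l_1)\times\cdots\times\mathring{J}(l_d)))\,\nu(dl)$ so that the denominator cancels. Your Campbell--Mecke justification of that Radon--Nikodym relation is simply a more explicit probabilistic reading of the paper's formal differentiation of the hitting functional $1-\exp(-t\nu(\cdot))$ supplied by Lemma \ref{exponent}.
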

\begin{proof}
We provide a demonstration for $\tilde{N}_{t}$ only; the same arguments applies for the process $N_{t}$. On filters, the measure $\tilde{\mu}_{t}(\cdot)$ coincides with the hitting functional $1-\exp\{-t\nu(\cdot)\}$. For a filter $L$, the hazard measure evaluated at $\mathring{L}$  is:
\begin{align*}
\tilde{H}_{t}(\mathring{L})&=\int_{S_{\tilde{\mu}_{t}}\cap \mathring{L}}\frac{\tilde{\mu}_{t}(dl)}{1-\tilde{\mu}_{t}(\mathring{J}(l_1) \times \cdots \times \mathring{J}(l_d))}\\
&=\int_{S_{\tilde{\mu}_{t}}\cap \mathring{L}}\frac{d(1-\exp(-t\nu(\mathring{J(l_1)} \times \cdots \times \mathring{J(l_d)})))}{\exp(-t\nu(\mathring{J(l_1)} \times \cdots \times \mathring{J(l_d)})}\\
		&=t\int_{S_{\tilde{\mu}_{t}}\cap \mathring{L}}\nu(dl)\\
		&=t\nu(S_{\tilde{\mu}_{t}}\cap \mathring{L}).
\end{align*}

Since $\tilde{H}_t$  is the hazard measure associated with $\tilde{N}_{t}$,  we have $E(\tilde{N}_{t}(\mathring{L})= \tilde{H}_t(\mathring{L})$. The proof uses the same arguments as
\citet[proof of theorem 2.1]{goldie1989records}. Let us recall that  $\tilde{X}_{n}(t)=(\tilde{X}_{n,1}(t), \ldots,\tilde{X}_{n,d}(t))$ is a record if, for $k<n$, $\tilde{X}_{k}(t) \in \mathring{J}(\tilde{X}_{n,1}(t)) \times\cdots \times \mathring{J}(\tilde{X}_{n,d}(t))$. Let us call $A_n$ the event $
A_n=\{\tilde{X}_{n}(t) \text{ is a record}\}$,$n \geq 1$. Given $\tilde{X}_{n}(t)=l$, the event $A_n$ is realized if and only if, for  $k<n$, $\tilde{X}_{k}(t) \in \mathring{J}(l_1) \times \cdots \times \mathring{J}(l_d)$. The expected number of upper records in $\mathring{L}$ is

	\begin{align*}
	E(\tilde{N}_{t}(\mathring{L}))&=E(\sum_{n=1}^{+\infty}1_{A_n})\\
	&=\sum_{n=1}^{+\infty}P(A_n)\\	
	&=\sum_{n=1}^{+\infty} \int_{S_{\tilde{\mu}_{t}}\cap \mathring{L}} \tilde{\mu}_{t}(\mathring{J(l_1)} \times \cdots \times \mathring{J(l_d)})^{n-1} d\tilde{\mu}_{t}(l)\\
	&= \int_{S_{\tilde{\mu}_{t}}\cap \mathring{L}} \sum_{n=1}^{+\infty}\tilde{\mu}_{t}(\mathring{J}(l_1) \times \cdots \times \mathring{J}(l_d))^{n-1} d\tilde{\mu}_{t}(l)\\
	&=\int_{S_{\tilde{\mu}_{t}}\cap \mathring{L}}  \frac{d\tilde{\mu}_{t}(l)}{1-\tilde{\mu}_{t}(\mathring{J}(l_1) \times \cdots \times \mathring{J}(l_d))}\\
	&=\tilde{H}_{t}(\mathring{L}).
\end{align*}		
\end{proof}

The hitting functional does not define a probability measure because it is not additive. However, it is possible to construct a probability measure based on the hitting functional by choosing appropriately the sigma-algebra. We will consider the Borel sigma-algebra generated by filters (see theorem \ref{theorem1}). Lemma \ref{measurability} assures that the space is Hausdorff and proves that the support of the upper and lower record distributions is $\mathbb{R}^{d}\setminus \{(0,\ldots,0)\}$. Lemma \ref{measurability} and theorem \ref{theorem1} rely on the following hypotheses.\\

{\bf Hypotheses.}

\begin{itemize}
	\item [($\mathcal{H}_1$)] The jumps of size $|\Delta X_{t}|=0$, $t>0$ are not considered in the record process:
$$
\mu_t(0,\ldots,0)=\tilde{\mu}_t(0,\ldots,0)=0;
$$
	\item [($\mathcal{H}_2$)]  the L\'{e}vy measure does not charge events where for some coordinates, the process of records is in a cemetery state:
	$$
	\nu(\cup_{i=1}^{d}\{y=(y_1, \ldots, y_d) \in \overline{\mathbb{R}}^{d}:y_{i} \in\{-\infty,+\infty\}\})=0.\\
	$$
		\item [($\mathcal{H}_3$)]  the L\'{e}vy process has infinite activity in the neighbourhoods of $(0,\ldots,0)$
	$$
	\nu(\overline{\mathbb{R}}^{\epsilon_1}\times \cdots \times\overline{\mathbb{R}}^{\epsilon_d}\setminus \{(0,\ldots,0)\})=+\infty,
	$$
with $\epsilon_i \in \{-,+\}$, $i \in \{1, \ldots,d\}$.
\end{itemize}

These three hypotheses will guarantee the existence of a measure that coincides with the hitting functional on filters and have $\nu$ as hazard measure.
\begin{lemma}[Monotone class and measurability]
\label{measurability}
	The following results characterize the sigma-algebra  generated by the filters and the topology generated by the partial order:
	\begin{itemize}
		\item [(a)] the set of filters $\mathcal{L}$,
		\begin{align*}
			\mathcal{L}=\{L=J(l_1) \times \cdots \times J(l_d):
			&J(l_i)=[l_i, +\infty]  \text{ if } l_i \geq 0,\\
			&J(l_i)=[-\infty, l_i], \text{ if } l_i < 0, i \in \{1, \ldots,d\} \}
		\end{align*}
		is a monotone class: it is closed under the countable union of increasing sequences of sets and the countable intersection of decreasing sequences of sets;
		\item [(b)] the graph of the relation $\leq$, $G=\{(x,y)\in \overline{\mathbb{R}}^{d}\times \overline{\mathbb{R}}^{d}: x \leq y\}$ is $\sigma(\mathcal{L})\times \sigma(\mathcal{L})$-measurable and is a closed set. The sets  $J(l_1)\times \cdots \times J(l_d)$ and $\mathring{J}(l_1)\times \cdots \times \mathring{J}(l_d)$  are measurable for any $(l_1, \ldots, l_{d}) \in \mathbb{R}^{d}$;
		\item [(c)] on $\overline{\mathbb{R}}^{\epsilon_1}\times \cdots \times \overline{\mathbb{R}}^{\epsilon_d}$, $\epsilon_i \in \{-,+\}$, the support of $\tilde{\mu}_{t}$, the law of the upper record of the jumps, is the set 
		$$
		S_{\tilde{\mu}_{t}}=\{l=(l_1, \ldots, l_d):\tilde{\mu}_{t}(\mathring{J}(l_1) \times \cdots \times \mathring{J}(l_d))<1 \}=\mathbb{R}^{\epsilon_1}\times \cdots \times \mathbb{R}^{\epsilon_d}\setminus \{(0,\ldots,0)\};
		$$
		\item [(d)] on $\overline{\mathbb{R}}^{\epsilon_1}\times \cdots \times \overline{\mathbb{R}}^{\epsilon_d}$, $\epsilon_i \in \{-,+\}$, the support of $\mu_{t}$, the law of the lower record of the jumps, is the set 
		$$
		S_{\mu_{t}}=\{l=(l_1, \ldots, l_d):\mu_{t}(J(l_1) \times \cdots \times J(l_d))>0\}=\mathbb{R}^{\epsilon_1}\times \cdots \times \mathbb{R}^{\epsilon_d}\setminus \{(0,\ldots,0)\}.
		$$
	\end{itemize}
\end{lemma}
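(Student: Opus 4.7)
The plan is to treat the four assertions sequentially: (a) and (b) via structural analysis of the product form of filters, and (c) and (d) by reducing the support condition to finiteness of the L\'evy measure through Lemma \ref{exponent} combined with hypotheses $\mathcal{H}_2$ and $\mathcal{H}_3$.

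For (a), I would exploit the product form $L=\prod_{i=1}^{d} J(l_i)$ to reduce to a one-dimensional statement: a nested sequence of filters corresponds to coordinatewise nested sequences of rays. Because the two shapes $[l,+\infty]$ (for $l\geq 0$) and $[-\infty,l']$ (for $l'<0$) are disjoint, any nested sequence $J(l_{n,i})$ forces the sign of $l_{n,i}$ to stabilize, after which the $l_{n,i}$ form a monotone sequence in $\overline{\mathbb{R}}^{\epsilon_{i}}$. Passing to the limit $l_i=\lim_{n} l_{n,i}$ coordinatewise then yields an element of $\mathcal{L}$. For (b), I would decompose $G$ as a union $\bigcup_{\epsilon} G_{\epsilon}$ over the $2^d$ sign patterns, with $G_{\epsilon}$ lying in $(\overline{\mathbb{R}}^{\epsilon_{1}}\times\cdots\times\overline{\mathbb{R}}^{\epsilon_{d}})^{2}$. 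On each orthant $G_{\epsilon}$ is an intersection of $d$ closed one-dimensional comparisons and is therefore closed; and since the rays $J(l_i)$ generate the Borel $\sigma$-algebra on each $\overline{\mathbb{R}}^{\epsilon_{i}}$, $G_{\epsilon}$ is in $\sigma(\mathcal{L})\times\sigma(\mathcal{L})$ as a countable intersection of products of rays. The closed filters $\prod_{i}J(l_i)$ lie in $\mathcal{L}$ by definition, and the interior versions sit in $\sigma(\mathcal{L})$ via countable unions of filters, for example $\mathring{J}(l_i)=\bigcup_{n}J(l_i+1/n)$ when $l_i\geq 0$.

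For (c), Lemma \ref{exponent} gives $\tilde{\mu}_t(\mathring{J}(l_1)\times\cdots\times\mathring{J}(l_d))=1-\exp(-t\nu(\mathring{J}(l_1)\times\cdots\times\mathring{J}(l_d)))$, so the condition $\tilde{\mu}_t(\mathring{J}(l))<1$ is equivalent to $\nu(\mathring{J}(l))<\infty$. I would then argue case by case: when $l=(0,\ldots,0)$ the set $\mathring{J}(l)$ is a punctured neighbourhood of the origin in the orthant whose L\'evy mass is infinite by $\mathcal{H}_3$, excluding the origin from the support; when some $l_i=\pm\infty$, hypothesis $\mathcal{H}_2$ forces exclusion; for $l\in\mathbb{R}^{\epsilon_{1}}\times\cdots\times\mathbb{R}^{\epsilon_{d}}\setminus\{0\}$, pick a coordinate $i_0$ with $|l_{i_0}|>0$ and observe that $\mathring{J}(l)\subset\{x:|x_{i_0}|>|l_{i_0}|\}$ is bounded away from the origin in Euclidean norm, so $\nu(\mathring{J}(l))<\infty$ and $l\in S_{\tilde{\mu}_t}$. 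The argument for (d) is symmetric, using $\mu_t(J(l)^c)=1-\exp(-t\nu(J(l)^c))$ and the analogous dichotomy on $\nu(J(l)^c)$.

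The main obstacle I anticipate lies in (a): if $l_{n,i}\downarrow l_i$ strictly then $\bigcup_{n}[l_{n,i},+\infty]=(l_i,+\infty]$, which is not literally the closed filter $J(l_i)$. I would handle this either by reading $\mathcal{L}$ as closed under monotone operations in the weaker sense that the limits lie in $\sigma(\mathcal{L})$ (which is what the measurability statement in (b) actually requires), or by enlarging $\mathcal{L}$ to include the corresponding one-sided open rays and checking that the product structure is preserved. A secondary subtlety in (c)--(d) is that finiteness of $\nu$ on a product set must be obtained from boundedness of a single coordinate away from zero, but this is immediate: any $|l_{i_0}|>0$ already gives $\|x\|\geq |x_{i_0}|>|l_{i_0}|$ on the whole product.
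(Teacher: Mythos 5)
Your proposal is correct and follows essentially the same route as the paper: coordinatewise monotone limits of the endpoints for (a), closedness plus generation by rays for (b), and reduction of the support condition to finiteness of $\nu$ via Lemma \ref{exponent} together with $\mathcal{H}_2$ and $\mathcal{H}_3$ for (c)--(d). The obstacle you flag in (a) is genuine --- for $l_{n,i}\downarrow l_i$ strictly one has $\bigcup_n [l_{n,i},+\infty]=(l_i,+\infty]$, not the closed ray --- and the paper's own proof simply asserts $\bigcup_n L_n = J(l_1)\times\cdots\times J(l_d)$ without addressing this, so your proposed repair (reading closure in the weaker $\sigma(\mathcal{L})$ sense or enlarging $\mathcal{L}$ by the one-sided open rays) is a needed patch rather than a detour.
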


\begin{proof}
		\begin{itemize}
		\item [(a)] Since the intersection of filters is again a filter, the set of filters $\mathcal{L}$ is closed under the countable intersection of increasing sequences of sets. If $L_{1}, L_{2}$ are two filters such that $L_{1} \subset L_{2}$, with  $L_{1}=J(l_{1,1}) \times \cdots \times J(l_{1,d})$, $L_{2}=J(l_{2,1}) \times \cdots \times J(l_{2,d})$, then $l_1=(l_{1,1}, \ldots, l_{1,d})\geq l_2=(l_{2,1}, \ldots, l_{2,d})$. Assume that $\{l_n: n \geq 1\}$ is a decreasing sequence, the corresponding set of filters $\{L_n: n \geq 1\}$ is an increasing sequence and $\cup_{n \geq 1}L_n=J(l_{1}) \times \cdots \times J(l_{d})$ with $l=(l_{1}, \ldots,l_{d})=\wedge_{n\geq 1}l_n=\inf_{n\geq 1} l_n$.
		
		\item [(b)] If $(x,y)\in G$ then $y \in J(x_{1}) \times \cdots \times J(x_{d})$, $x=(x_{1}, \ldots, x_{d})$ or $x \notin \mathring{J}(y_{1}) \times \cdots \times \mathring{J}(y_{d})$, $y=(y_{1}, \ldots, y_{d})$. Let $(x,y)$ be an accumulation point of a sequence $(x_{n},y_{n})_{n \geq 1}$  of elements of of $G$; there exists a monotone subsequence $(x_{k(n)})_{n\geq 1}$ of elements converging to $x$. Let us suppose that the subsequence  $(x_{k(n)})_{n\geq 1}$ is non-decreasing; we have $y \in \cap_{n \geq 1}J(x_{k(n),1}) \times \cdots \times J(x_{k(n),d})$ and $x \leq y$. If, in the contrary the  subsequence  $(x_{k(n)})_{n\geq 1}$ is non-increasing, then  we have $y \in \cup_{n \geq 1}J(x_{k(n),1}) \times \cdots \times J(x_{k(n),d})$ and $x \leq y$. The set $G$ is a Borel set in the product topology and is measurable.

		\item [(c)] We have to show that $\tilde{\mu}_{t}(S_{\tilde{\mu}_{t}})=1$. Relying on the equivalence
		$$
		1-\tilde{\mu}_{t}(\mathring{J(l_1)} \times \cdots \times \mathring{J(l_d)})>0 \iff \nu(\mathring{J}(l_1) \times \cdots \times \mathring{J}(l_d))< +\infty,
		$$
	we can affirm that if $l \in S_{\tilde{\mu}_{t}}$ then any $x \geq l$	is in $S_{\tilde{\mu}_{t}}$ since $\mathring{J}(x_1) \times \cdots \times \mathring{J}(x_d)$ is a subset of $\mathring{J}(l_1) \times \cdots \times \mathring{J}(l_d)$ which have a finite L\'{e}vy measure. Using the hypotheses $\mathcal{H}_1$ and $\mathcal{H}_2$, we can deduce that 
$$
	S_{\tilde{\mu}_{t}}  \supset  \cup_{l \in \mathbb{R}^{d}\setminus \{(0,\ldots,0)\}} \mathring{J}(l_1)\setminus \{-\infty,+\infty\} \times \cdots \times \mathring{J}(l_d)\setminus \{-\infty,+\infty\}\\
$$
since $\nu(\mathring{J}(l_1)\setminus \{-\infty,+\infty\} \times \cdots \times \mathring{J}(l_d)\setminus \{-\infty,+\infty\})< +\infty$ for $l \in \mathbb{R}^{d}\setminus \{(0,\ldots,0)\}$. Conversely, we have 	$	S_{\tilde{\mu}_{t}}  \subset \mathbb{R}^{d}\setminus \{(0,\ldots,0)\}$ since
\begin{align*}
\tilde{\mu}_{t}(\overline{\mathbb{R}}^{d}\setminus (\mathbb{R}^{d}\setminus \{(0,\ldots,0)\}))&= \tilde{\mu}_{t}((0,\ldots,0))+ \tilde{\mu}_{t}(\cup_{i=1}^{d}\{y=(y_1, \ldots, y_d) \in \overline{\mathbb{R}}^{d}:y_{i} \in\{-\infty,+\infty\}\})\\
&=\tilde{\mu}_{t}(\cup_{i=1}^{d}\cup_{(\epsilon_1, \ldots,\epsilon_d) \in\{-1,1\}^{d}}\{y=(y_1, \ldots, y_d) \in \overline{\mathbb{R}}^{d}:y_{i} \in\{-\infty,+\infty\},\\
& sgn(y_j)=\epsilon_j, j \in \{1,\ldots,d\}\})\\
& \leq \sum_{i=1}^{d}\sum_{(\epsilon_1, \ldots,\epsilon_d) \in\{-1,1\}^{d}}\tilde{\mu}_{t}(\{y=(y_1, \ldots, y_d) \in \overline{\mathbb{R}}^{d}:y_{i} \in\{-\infty,+\infty\},\\ 
& sgn(y_j)=\epsilon_j, j \in \{1,\ldots,d\}\})\\
& \leq \sum_{i=1}^{d}\sum_{\epsilon_j\in\{-1,1\}, j \neq i, 1 \leq j \leq d}\big \{1-\exp(-t\nu(\{y=(y_1, \ldots, y_d) \in \overline{\mathbb{R}}^{d}:\\
&y_{i} \in\{-\infty,+\infty\}, sgn(y_j)=\epsilon_j, j \in \{1,\ldots,d\}\})) \big \} \\
& \leq 0
\end{align*}	
As a consequence, we have
$$
1=\tilde{\mu}_{t}( \mathbb{R}^{d}\setminus \{(0,\ldots,0)\})=\tilde{\mu}_{t}(S_{\tilde{\mu}_{t}}).
$$
\item [(d)] The proof for $S_{\mu_{t}}$, $t>0$, uses the same arguments as in $(c)$. 
	\end{itemize}
\end{proof}	

\begin{theorem}
\label{theorem1}
Assume that $\mu_{t}$, $\tilde{\mu}_{t}$, $t>0$, $\nu$ have no atoms. Given a distribution of  the lower record  of the jumps $\mu_{t}$ (respectively a distribution of the upper record $\tilde{\mu}_{t}$ ), the hazard measure 
\begin{align}
\label{eqhazard}
 A \mapsto	H_{t}(A)=\int_{S_{\mu_{t}}\cap A}\frac{1}{\mu_{t}( J(l_1)\times \cdots \times J(l_d))} \mu_{t}(dl),
\end{align}

(respectively
\begin{align}
\label{eqhazard2}
 A \mapsto \tilde{H}_{t}(A)=\int_{S_{\tilde{\mu}_{t}} \cap A} \frac{1}{1-\tilde{\mu}_{t}( \mathring{J(l_1)}\times \cdots \times  \mathring{J(l_d)})} \tilde{\mu}_{t}(dl),
\end{align}
) coincides with the  L\'{e}vy  measure on filters (respectively on the complement of filters).

The equations (\ref{eqexpo}) and (\ref{eqhazard}) (respectively (\ref{eqexpo}) and (\ref{eqhazard2})) establish a bijection between the set of the   distributions of the lower record of the jumps (respectively the upper record of the jumps) and the set of L\'{evy} measures.
\end{theorem}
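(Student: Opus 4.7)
The plan is to establish the theorem in two steps: first identify the hazard measure with the L\'{e}vy measure on the generating class of filters, then deduce the announced bijection. For any filter $L = J(l_1)\times\cdots\times J(l_d)$, Lemma \ref{exponent} gives the hitting-functional identity $\mu_t(J(l_1)\times\cdots\times J(l_d)) = \exp(-t\nu((J(l_1)\times\cdots\times J(l_d))^c))$. Substituting this into the defining integral \eqref{eqhazard} and performing the same change of variable as in the preceding lemma yields
\[
H_t(L) \;=\; \int_{S_{\mu_t}\cap L} \frac{d\mu_t(l)}{\mu_t(J(l_1)\times\cdots\times J(l_d))} \;=\; t\,\nu(L \cap S_{\mu_t}).
\]
By Hypothesis $\mathcal{H}_2$ and Lemma \ref{measurability}(d), $S_{\mu_t}$ differs from the full quadrant $\mathbb{R}^{\epsilon_1}\times\cdots\times\mathbb{R}^{\epsilon_d}$ only by a $\nu$-null set, so that $H_t(L) = t\nu(L)$. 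The argument for $\tilde{H}_t$ on the interior of a filter (equivalently, on the complement of a filter within the appropriate quadrant) is entirely symmetric, built from the identity $\tilde{\mu}_t(\mathring{L})=1-\exp(-t\nu(\mathring{L}))$.

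Given this identification, the bijection is essentially formal. Starting from a L\'{e}vy measure $\nu$ satisfying $\mathcal{H}_1$--$\mathcal{H}_3$, Lemma \ref{exponent} defines $\mu_t$ on filters via \eqref{eqexpo}; since Lemma \ref{measurability}(a) exhibits $\mathcal{L}$ as a monotone class generating the Borel $\sigma$-algebra, a monotone-class argument promotes this prescription to a unique probability measure on $\sigma(\mathcal{L})$. Conversely, given $\mu_t$, the hazard measure $H_t$ defined by \eqref{eqhazard} is $\sigma$-additive on $\sigma(\mathcal{L})$, and the first step identifies it with $t\nu$ on filters; since filters form a $\pi$-system generating the Borel $\sigma$-algebra, $\nu$ is uniquely determined. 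The two constructions are mutually inverse: starting from $\nu$ and computing the hazard measure of the associated $\mu_t$ returns $t\nu$ by the first step; starting from $\mu_t$ and re-applying the hitting functional $L \mapsto \exp(-H_t(L^c))$ returns $\mu_t(L)$ on filters, hence everywhere by uniqueness of the extension. The same argument, with $L^c$ replaced by $\mathring{L}$, handles the upper-record side via \eqref{eqhazard2}.

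The main obstacle is the extension of the hitting functional from $\mathcal{L}$ to $\sigma(\mathcal{L})$. Since $\mathcal{L}$ is stable under intersection but not under complementation, Carath\'{e}odory's theorem for semirings does not apply directly; I would first extend by finite additivity to the algebra $\mathcal{A}$ generated by $\mathcal{L}$, then verify $\sigma$-additivity through continuity from below, invoking Lemma \ref{measurability}(a) together with the monotone continuity of $s\mapsto 1-\exp(-s)$ composed with $\nu$ on increasing sequences of filters. Hypothesis $\mathcal{H}_3$ rules out escape of mass near the origin, Hypothesis $\mathcal{H}_2$ rules out escape to the cemetery states, and Hypothesis $\mathcal{H}_1$ removes the atom at the origin, so that the extended set function is a genuine probability measure on $\sigma(\mathcal{L})$ and the bijection is well posed.
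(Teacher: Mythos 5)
Your overall strategy matches the paper's: identify the hazard measure with $t\nu$ on filters by the change-of-variable computation $\tilde{\mu}_{t}(dl)/(1-\tilde{\mu}_{t}(\mathring{J}(l_1)\times\cdots\times\mathring{J}(l_d)))=t\,\nu(dl)$, then extend the hitting functional $L\mapsto 1-\exp(-t\nu(\mathring{L}))$ by finite additivity to the algebra $\mathcal{A}(\mathcal{L})$ and promote it to $\sigma(\mathcal{L})$ by a monotone-class argument, reading off the bijection from the mutual inverses $\nu\mapsto P^{*}$ and $\tilde{\mu}_{t}\mapsto (1/t)\tilde{H}_{t}$. Your use of Lemma \ref{measurability}(c)--(d) together with $\mathcal{H}_1$--$\mathcal{H}_2$ to drop the intersection with the support is also consistent with what the paper does implicitly.

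The one substantive step you defer without resolving is precisely where the paper's proof does its real work: showing that the finitely additive extension of the hitting functional to $\mathcal{A}(\mathcal{L})$ is actually a well-defined set function with values in $[0,1]$. Declaring $P^{*}(S_1\cup S_2)=P^{*}(S_1)+P^{*}(S_2)-P^{*}(S_1\cap S_2)$ on unions of filters and complements of filters does not automatically produce nonnegative values, because $T(A)=1-\exp(-t\nu(A))$ is not additive. The paper verifies this explicitly: it computes $P^{*}$ on $\mathring{L}_1^{c}$, $\mathring{L}_1\cup\mathring{L}_2$, $\mathring{L}_1\cap\mathring{L}_2^{c}$, $\mathring{L}_1\cup\mathring{L}_2^{c}$, uses the stability of filters under intersection so that $P^{*}(\mathring{L}_1\cap\mathring{L}_2)$ is again given by the hitting functional, and then invokes the FKG inequality $T(\mathring{L}_1\cup\mathring{L}_2)+T(\mathring{L}_1\cap\mathring{L}_2)\geq T(\mathring{L}_1)+T(\mathring{L}_2)$ to get $P^{*}(\mathring{L}_1\cup\mathring{L}_2)\leq T(\mathring{L}_1\cup\mathring{L}_2)\leq 1$, with nonnegativity following from monotonicity of $\nu$ on $\mathring{L}_1\cap\mathring{L}_2\subset\mathring{L}_2$. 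Your sketch names Carath\'{e}odory-type issues and $\sigma$-additivity but never addresses this consistency and positivity check, which is the genuine obstacle; without it the ``extension by finite additivity'' is not yet known to exist. If you add that verification (the FKG step, or an equivalent supermodularity argument for $A\mapsto 1-\exp(-t\nu(A))$), your proof is complete and coincides with the paper's.
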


\begin{proof}
Let us define a probability measure $P^{*}$ as follows. If $L_1$ and $L_2$ are filters, $S_1$, $S_2$ are sets such that $S_1 \in \{\mathring{L}_{1}, \mathring{L}_1^{c}\}$, $S_2 \in \{\mathring{L}_2, \mathring{L}_2^{c}\}$, the probability of  $S_1 \cup S_2$ is defined by
$$
P^{*}(S_1 \cup S_2)=P^{*}(S_1)+P^{*}(S_2)-P^{*}(S_1 \cap S_2)
$$
with
$$
P^{*}(\mathring{L})=1-\exp(-t\nu(\mathring{L})) \text{ if } L \text{ is a filter. }
$$

$P^{*}$ is additive and well defined on the algebra $\mathcal{A}(\mathcal{L})$ generated by the set of filters $\mathcal{L}$. The probability of the events $\mathring{L}_1^{c}, \mathring{L}_1 \cup \mathring{L}_2, \mathring{L}_1 \cup \mathring{L}_2^{c}, \mathring{L}_1 \cap \mathring{L}_2, \mathring{L}_1 \cap \mathring{L}_2^{c} $ are given by the following formulas:
\begin{align*}
P^{*}(\mathring{L}_1 \cup \mathring{L}_2)&=1-\exp(-t\nu(\mathring{L}_1))-\exp(-t\nu(\mathring{L}_2))+\exp(-t\nu(\mathring{L}_1 \cap \mathring{L}_2));\\
P^{*}(\mathring{L}_1^{c})&=P^{*}(\mathring{L}_1 \cup \mathring{L}_1^{c})-P^{*}(\mathring{L}_1)=\exp(-t\nu(\mathring{L}_1));\\
P^{*}(\mathring{L}_1 \cap \mathring{L}_2)&=1-\exp(-t\nu(\mathring{L}_1 \cap \mathring{L}_2)) \text{ since } \mathring{L}_1 \cap \mathring{L}_2 \text{ is still a filter; }\\
P^{*}(\mathring{L}_1 \cap \mathring{L}_2^{c})&=P^{*}(\mathring{L}_1)-P^{*}(\mathring{L}_1 \cap \mathring{L}_2)=\exp(-t\nu(\mathring{L}_1 \cap \mathring{L}_2))-\exp(-t\nu(\mathring{L}_1));\\
P^{*}(\mathring{L}_1 \cup \mathring{L}_2^{c})&=1-P^{*}(\mathring{L}_1^{c} \cap \mathring{L}_2)=1-\exp(-t\nu(\mathring{L}_1 \cap \mathring{L}_2))+\exp(-t\nu(\mathring{L}_2)).
\end{align*}
The hitting functional verifies the FKG inequality,
$$
T(\mathring{L}_1 \cup \mathring{L}_2)+T(\mathring{L}_1 \cap \mathring{L}_2) \geq  T(\mathring{L}_1 ) +T( \mathring{L}_2),
$$
which means that $P^{*}(\mathring{L}_1 \cup \mathring{L}_2) \leq T(\mathring{L}_1 \cup \mathring{L}_2) \leq 1$. On the other hand, since 
$\mathring{L}_1 \cap \mathring{L}_2 \subset \mathring{L}_2$, we have $\exp(-t\nu(\mathring{L}_1 \cap \mathring{L}_2)) -\exp(-t\nu( \mathring{L}_2)) \geq 0$ and $P^{*}(\mathring{L}_1 \cup \mathring{L}_2) \geq 0$. Using similar arguments, it is easy to show that  $P^{*}(\mathring{L}_1^{c})$, $P^{*}(\mathring{L}_1 \cap \mathring{L}_2)$, $P^{*}(\mathring{L}_1 \cap \mathring{L}_2^{c})$, $P^{*}(\mathring{L}_1 \cup \mathring{L}_2^{c})$ are all in the interval $[0,1]$. $P^{*}$ and $\tilde{\mu}_{t}$ coincide on the algebra $\mathcal{A}(\mathcal{L})$ generated by the set of filters $\mathcal{L}$ which is a monotone class; $P^{*}$ and $\tilde{\mu}_{t}$ coincide then on the sigma-algebra $\sigma(\mathcal{L})$ generated by $\mathcal{L}$. The hazard measure associated with $P^{*}$ coincides with $t \nu$ on the filters. Using again the monotone class argument, we can say that $\tilde{H}_{t}$ coincides with $t \nu$ on the sigma-algebra $\sigma(\mathcal{L})$. The formula $\tilde{H}_{t}(dl)=\tilde{\mu}_{t}(dl)/(1-\tilde{\mu}_{t}( \mathring{J(l_1)}\times \cdots \times  \mathring{J(l_d)}))$ defines a one-to-one correspondence between $\tilde{\mu}_{t}$ and  $(1/t)\tilde{H}_{t}=\nu$ on $\sigma(\mathcal{L})$: the inverse correspondence is the application $\nu \mapsto P^{*}$.
\end{proof}
In the lemma (\ref{exponent}), the expression for the cumulant function
is  $Cu(\chi)=\log(\mu_t(L))=-t \nu(L)$ for $\mu_t$ and $\tilde{Cu}(\chi)=\log(\tilde{\mu}_t(\mathring{L}^{c}))=-t \nu(\mathring{L})$ for $\tilde{\mu}_t$, $t>0$. 
\begin{corollary}
Let $(\mu_{t})_{t\geq 0}$ (resp. $(\tilde{\mu}_{t})_{t\geq 0}$) be a convolution semi-group of probability measures. If the semi-group of semi-characters $\chi=1_{L}$ separates points, then there exists a unique L\'{e}vy measure $\nu$ such that the Laplace transform of $\mu_{t}$ (resp. $\tilde{\mu}_{t}$) is given by
\begin{align}
\label{cor01}
E_{\mu_t}(1_{L^{c}}(\wedge_{s \leq t} \Delta X_{s}))=\exp(t Cu(\chi))=\exp(-t \nu(L^{c}))
\end{align}
(resp.
\begin{align}
\label{cor02}
E_{\tilde{\mu}_t}(1_{\mathring{L}^{c}}(\vee_{s \leq t} \Delta X_{s}))=\exp(t \tilde{Cu}(\chi))=\exp(-t  \nu(\mathring{L}))
\end{align}
). Conversely, given a L\'{e}vy measure $\nu$, there exists a convolution semi-group of probability measures $(\mu_{t})_{t\geq 0}$ (resp. $(\tilde{\mu}_{t})_{t\geq 0}$) such that (\ref{cor01}) (resp. (\ref{cor02}) ) holds true.
\end{corollary}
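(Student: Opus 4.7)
The plan is to derive this corollary directly from Theorem \ref{theorem1} and Lemma \ref{exponent}, which together already encode a one-to-one correspondence between distributions of records and Lévy measures; the corollary is essentially the reformulation of that correspondence through Laplace transforms.

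For the direct implication, I would start from a given convolution semigroup $(\mu_t)_{t \geq 0}$ and exploit the filter relation (equation \ref{eqfilter}) together with independence of increments of the underlying Lévy process to show that $t \mapsto \mu_t(L)$ is multiplicative: for each filter $L$, $\mu_{t+s}(L) = \mu_t(L)\mu_s(L)$. Hence $\mu_t(L) = \exp(-tc(L))$ for some non-negative set function $c$, and Lemma \ref{exponent} identifies $c(L) = \nu(L^c)$, where $\nu$ is the Lévy measure recovered via the hazard measure formula (\ref{eqhazard}). Uniqueness of $\nu$ follows from the hypothesis that the semi-characters $1_L$ separate points: by Lemma \ref{lemMSuf}, the Laplace transform is then injective, so any two Lévy measures producing the same $\exp(tCu(\chi))$ for every filter must coincide on the sigma-algebra generated by filters, which by Lemma \ref{measurability}(b) is the Borel sigma-algebra on the support. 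The argument for $(\tilde{\mu}_t)_{t \geq 0}$ proceeds in parallel, substituting $\mathring{L}$ and $\tilde{H}_t$ for $L^c$ and $H_t$.

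For the converse, I would mirror the construction of $P^{\ast}$ in the proof of Theorem \ref{theorem1}. Starting from a Lévy measure $\nu$, define $\mu_t(L) = \exp(-t\nu(L^c))$ on filters and extend by inclusion-exclusion to the algebra $\mathcal{A}(\mathcal{L})$; the FKG-type inequality verified in the proof of Theorem \ref{theorem1} keeps the values in $[0,1]$, and since $\mathcal{L}$ is a monotone class by Lemma \ref{measurability}(a), the monotone class theorem lifts $\mu_t$ uniquely to $\sigma(\mathcal{L})$. The convolution semigroup property $\mu_{t+s} = \mu_t \ast \mu_s$ then holds first on filters, because $\exp(-(t+s)\nu(L^c))$ factors as $\exp(-t\nu(L^c))\exp(-s\nu(L^c))$ and matches $\mu_t(L)\mu_s(L)$; the extension to the full sigma-algebra follows again by a monotone class argument.

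The main obstacle is the extension step: ensuring that the set-theoretic operations (additivity, convolution, multiplicative decomposition) that are immediate on the algebra of filters carry over faithfully to $\sigma(\mathcal{L})$. Once one leverages Lemma \ref{measurability}(a) to treat filters as a monotone class, and the hypotheses $(\mathcal{H}_1)$--$(\mathcal{H}_3)$ to rule out pathological mass at the cemetery states $\{(0,\ldots,0)\}$ and $\{\pm\infty\}$, the monotone class theorem handles these extensions cleanly, and the corollary reduces to a notational translation of Theorem \ref{theorem1} into the language of Laplace transforms.
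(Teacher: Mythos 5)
Your proposal is correct and follows essentially the same route as the paper, which simply cites Lemma \ref{exponent} for the two formulas and the injectivity of the Laplace transform (under the separating-points hypothesis) for uniqueness. Your treatment is more detailed than the paper's two-sentence proof --- in particular you spell out the converse direction via the $P^{*}$ construction and monotone class argument from Theorem \ref{theorem1}, which the paper leaves implicit --- but the underlying ideas are the same.
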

 \begin{proof}
 The equations \ref{cor01} and \ref{cor02} are a consequence of lemma \ref{exponent}. The unicity is assured by the injectivity of the Laplace transform.
 \end{proof}
\subsection{Properties of the law of the upper and lower record}
The distribution of the upper record is not strictly max-infinitely divisible, but exhibits most of the properties of max-infinitely divisible distributions. We will adopt the following definitions in order to cope with this situation.  
 \begin{definition}
 	A distribution function $G$ on $\mathbb{R}^{d}$ will be said max-infinitely divisible (max-id) if, for every integer $k>1$, there exists a cdf $G_{k}$  and a continuous function $h$ such that $h(G(x))=(h(G_{k}(x)))^{k}$ for every $x \in\mathbb{R}^{d}$.
 \end{definition}
  \begin{definition}
 	A distribution function $G$ on $\mathbb{R}^{d}$ will be said max-stable if, for every $t>0$, there exists a continuous function $h$ such that 
 	$$h(G(t x))^{t}=h(G(x)) $$ for every $x \in\mathbb{R}^{d}$.
 \end{definition}

\begin{proposition}
The distributions of the upper  and lower record are max-infinitely divisible  distributions. In addition, if the L\'{e}vy measure satisfies $ \nu(\lambda^{-1} A)=\lambda\nu(A )$ for any measurable set $A$, then the distributions of the upper and lower record are max-stable.
\end{proposition}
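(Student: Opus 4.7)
My plan is to deduce both claims directly from the two identities of Lemma~\ref{exponent}, namely $\tilde{\mu}_t(\mathring{L})=1-\exp(-t\nu(\mathring{L}))$ and $\mu_t(L)=\exp(-t\nu(L^c))$ for every filter $L$. These show that $\tilde{\mu}_t$ and $\mu_t$ on $\sigma(\mathcal{L})$ are exponentials of a scalar multiple of $\nu$, which is precisely the algebraic structure that yields max-infinite divisibility and, under the homogeneity assumption, max-stability. Throughout I will write $L_x=J(x_1)\times\cdots\times J(x_d)$ for the filter determined by $x=(x_1,\ldots,x_d)$.

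For max-infinite divisibility I would fix an integer $k>1$ and write, for the upper record,
\[
1-\tilde{\mu}_t(\mathring{L})=\exp(-t\nu(\mathring{L}))=\bigl(\exp(-(t/k)\nu(\mathring{L}))\bigr)^{k}=\bigl(1-\tilde{\mu}_{t/k}(\mathring{L})\bigr)^{k}.
\]
Taking the distribution function $G(x)=\tilde{\mu}_t(\mathring{L}_x)$, the candidate $G_k(x)=\tilde{\mu}_{t/k}(\mathring{L}_x)$, and the continuous distortion $h(u)=1-u$, this is exactly the relation $h(G(x))=(h(G_k(x)))^{k}$ of the extended definition. For the lower record, the identity $\mu_t(L)=\bigl(\mu_{t/k}(L)\bigr)^{k}$ holds identically in $L$, so $G(x)=\mu_t(L_x)$ fits the classical form with $h=\mathrm{id}$.

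For max-stability I would first check that positive scaling commutes with the filter construction: since $t>0$ preserves the sign of each coordinate, one has $\mathring{L}_{tx}=t\mathring{L}_x$, and because the ambient orthant $\overline{\mathbb{R}}^{\epsilon_1}\times\cdots\times\overline{\mathbb{R}}^{\epsilon_d}$ is itself $t$-invariant, one also obtains $L_{tx}^{c}=tL_x^{c}$. The homogeneity hypothesis applied with $\lambda=1/t$ then yields $\nu(\mathring{L}_{tx})=(1/t)\nu(\mathring{L}_x)$ and $\nu(L_{tx}^{c})=(1/t)\nu(L_x^{c})$. Substituting into Lemma~\ref{exponent} and raising to the $t$-th power gives
\[
\bigl(1-\tilde{\mu}_1(\mathring{L}_{tx})\bigr)^{t}=\exp(-\nu(\mathring{L}_x))=1-\tilde{\mu}_1(\mathring{L}_x),\qquad \mu_1(L_{tx})^{t}=\exp(-\nu(L_x^{c}))=\mu_1(L_x),
\]
which are the max-stable identities $h(G(tx))^{t}=h(G(x))$ with the same choices of $G$ and $h$ as above.

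There is no substantive obstacle; everything reduces to one-line exponential computations once Lemma~\ref{exponent} is in hand. The only point requiring care is bookkeeping: I must pick the distribution function $G$ attached to each record and the continuous distortion $h$ coherently so that the upper and lower records both fit the extended max-id definition, and I must ensure that the complement $L^{c}$ is taken inside the restricted orthant $\overline{\mathbb{R}}^{\epsilon_1}\times\cdots\times\overline{\mathbb{R}}^{\epsilon_d}$, since the invariance of this orthant under $x\mapsto tx$ is what makes the scaling step of the max-stable argument go through.
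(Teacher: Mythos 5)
Your proposal is correct and follows essentially the same route as the paper: both rest entirely on the identities of Lemma~\ref{exponent}, taking $h=\mathrm{id}$ for the lower record and $h(u)=1-u$ for the upper record, with the $t/k$ splitting giving max-infinite divisibility and the homogeneity of $\nu$ giving max-stability. The paper states this in a single sentence; you have simply written out the exponential computations it leaves implicit.
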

\begin{proof}
Using lemma \ref{exponent}, we can see that the law of the lower record is  max-infinitely divisible and max-stable for $h(x)=x$, $x \in [0,1]$ and the law of the upper record is  max-infinitely divisible and max-stable for $h(x)=1-x$, $x \in [0,1]$.
\end{proof}

Infinitely divisible  distributions  are often characterized in term of  their L\'{e}vy measure; for  example, for $d=1$, the class of self-decomposable distributions are the  infinitely  divisible distributions for which the L\'{e}vy measure is  of the form
 $ \nu(dx)=(q(x)/|x|)dx$, $q$ is increasing on $(-\infty,0)$ and decreasing on $(0, +\infty)$. In a dimension $d \geq 2$, the characterization of infinitely divisible  distributions  is done through the radial component of a polar decomposition of the L\'{e}vy measure \citep{barndorff2006some}. Let us consider the following polar decomposition of the L\'{e}vy measure
$$
\nu(A)=\int_{\mathbb{S}_{d}} (\int_{[0,+\infty)} 1_{A}(r \zeta) \nu_{\zeta}(dr))\lambda(\zeta).
$$
for any Borel set $A \in \mathbb{R}^{d}$, $\lambda$ a positive measure on $\mathbb{S}_{d}=\{x: \lVert x \rVert=1\}$; $\nu_{\zeta}([r, +\infty))$ is measurable in $\zeta$ for any set $[r, +\infty)$, $r \geq 0$. We have the following characterizations of infinitely divisible  distributions \citep{barndorff2007levy, barndorff2006some}:
\begin{itemize}
\item [(i)] the  self-decomposable distributions  correspond to   $r\nu_{\zeta}(dr)/dr$ is measurable and decreasing on $]0,+\infty[$ for $\lambda-$almost all $\zeta$ ;
\item [(ii)] the Bondesson class, which contains mixtures of exponential distributions, corresponds to $\nu_{\zeta}(dr)/dr$ completely monotone for $\lambda-$almost all $\zeta$
\item [(iii)] the Thorin class which contains Gamma distributions corresponds to  $r \nu_{\zeta}(dr)/dr$  measurable and completely monotone for $\lambda-$almost all $\zeta$.
\end{itemize}
For a given $\zeta$, let us define the distribution of the upper and lower for the radial component:
\begin{align*}
\mu_{t,\zeta}([r,+\infty])&=\exp(-t\nu_{\zeta}([0,r))),\\
\tilde{\mu}_{t,\zeta}((r,+\infty])&=1-\exp(-t\nu_{\zeta}((r,+\infty])), r \geq 0.
\end{align*}
\begin{proposition}
\label{prop32}
The class of self-decomposable distributions (resp. the Thorin class) is characterized by a function $r \mapsto r\mu_{t,\zeta}(dr)/(\mu_{t,\zeta}dr)$ decreasing (resp. completely monotone) on $]0,+\infty[$ for $\lambda-$almost all $\zeta$. The Bondesson class corresponds to  $\mu_{t,\zeta}(dr)/(\mu_{t,\zeta}dr)$ completely monotone for $\lambda-$almost all $\zeta$.
\end{proposition}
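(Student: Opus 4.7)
The plan is to reduce each of the three equivalences to the known characterization of the corresponding class, recalled just before the proposition, which is expressed directly in terms of the radial density $\nu_\zeta(dr)/dr$ of the L\'{e}vy measure. The bridge between $\nu_\zeta$ and the record distribution $\mu_{t,\zeta}$ is the exponential identity $\mu_{t,\zeta}([r,+\infty])=\exp(-t\nu_\zeta([0,r)))$ displayed just above the proposition; this is the radial incarnation of the hazard-measure correspondence of Theorem \ref{theorem1} restricted to the ray $\{r\zeta: r>0\}$.

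First I would turn that identity into a pointwise hazard-rate identity. Taking $-\log$ and differentiating in $r$ on $(0,+\infty)$ yields
\begin{equation*}
\frac{\mu_{t,\zeta}(dr)}{\mu_{t,\zeta}([r,+\infty])\,dr}=t\,\frac{\nu_\zeta(dr)}{dr},
\end{equation*}
so that, up to the positive constant $t$, the hazard rate of the lower-record radial distribution is exactly the L\'{e}vy density that governs the classification. An analogous computation on $\tilde\mu_{t,\zeta}$ produces the same radial density as the reverse hazard rate of the upper record, so either record law can be used to phrase the conditions.

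The three equivalences then follow by substitution into the characterizations recalled from \citet{barndorff2006some, barndorff2007levy}. For the self-decomposable class, the condition that $r\,\nu_\zeta(dr)/dr$ be decreasing on $(0,+\infty)$ is equivalent, after multiplication by the positive constant $t$, to $r\,\mu_{t,\zeta}(dr)/(\mu_{t,\zeta}([r,+\infty])\,dr)$ being decreasing. Since complete monotonicity is preserved by, and reflected through, multiplication by a strictly positive constant, the same substitution converts the Thorin condition on $r\,\nu_\zeta(dr)/dr$ and the Bondesson condition on $\nu_\zeta(dr)/dr$ into the claimed conditions on the hazard rate of $\mu_{t,\zeta}$. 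Because the original characterizations hold for $\lambda$-almost every direction $\zeta$, the same qualifier propagates automatically, and the proposition is established in both directions.

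The one analytic point that needs care is the existence and measurability of the radial densities $\nu_\zeta(dr)/dr$ as honest functions of $r$ for $\lambda$-almost every $\zeta$, so that both sides of the hazard-rate identity are unambiguous functions rather than merely Radon--Nikod\'{y}m derivatives. This is implicit in the way the three classes are recalled above the proposition, where the density is already treated as a function of $r$; once that is granted, no additional estimates or convergence arguments are needed, and the monotonicity and complete-monotonicity equivalences are exact.
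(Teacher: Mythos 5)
Your proposal is correct and takes essentially the same route as the paper: the paper's proof consists precisely of observing that the identity $\mu_{t,\zeta}([r,+\infty])=\exp(-t\nu_{\zeta}([0,r)))$ yields $\nu_{\zeta}(dr)$ as the hazard rate of $\mu_{t,\zeta}$ and then substituting into the three recalled characterizations. Your version merely spells out the logarithmic differentiation, the harmless factor of $t$, and the measurability caveat that the paper leaves implicit.
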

\begin{proof}
From the definition of $\mu_{t,\zeta}$, we have $\nu_{\zeta}(dr)=\mu_{t,\zeta}(dr)/(\mu_{t,\zeta}dr)$. By replacing $\nu_{\zeta}(dr)$ by its equivalent expression in the definition of the different classes, we have the result.
\end{proof}
\section{L\'{e}vy copulas induced by the upper and lower record}
\subsection{Distribution function defined by the hitting functional}
\citet{barndorff2007levy} defined positive L\'{e}vy copulas by considering the image of the L\'{e}vy measure $\nu$ under the transformation $(x_1, \ldots, x_d) \mapsto (1/x_1, \ldots, 1/x_d)$. We extend this framework to $\overline{\mathbb{R}}^{\epsilon_1}\times \cdots \times \overline{\mathbb{R}}^{\epsilon_d}$, with $\epsilon_i \in \{+,-\}$, $i=1, \ldots,d$ by considering the image of the L\'{e}vy measure by the application $(x_1, \ldots, x_d) \mapsto (y_1, \ldots, y_d)$, with $y_i=x_i$ if $x_i<0$, $y_i=1/x_i$ if $x_i\geq 0$. On  $\overline{\mathbb{R}}^{\epsilon_1}\times \cdots \times \overline{\mathbb{R}}^{\epsilon_d}$, let us define the applications
\begin{align*}
G_{u,\epsilon_1 \cdots \epsilon_d}:&  (x_{1}, \ldots,x_{d}) \mapsto \tilde{\mu}_{t}(J^{+}(x_1) \times \cdots \times J^{+}(x_d)) \\
G_{l,\epsilon_1 \cdots \epsilon_d}:&  (x_{1}, \ldots,x_{d}) \mapsto \mu_{t}(J^{+}(x_1) \times \cdots \times J^{+}(x_d))
\end{align*}
where $J^{+}(x_i)=[1/x_i, +\infty]$ if $x_i \geq 0$ and $J^{+}(x_i)=[-\infty, x_ {i}]$ if $x_i < 0$, $i \in \{1, \ldots,d\}$. The functions $G_{u, \epsilon_1 \cdots \epsilon_d}$ and $G_{l, \epsilon_1 \cdots \epsilon_d}$ were introduced in the proof of lemma \ref{pre1}.
\begin{proposition}
$G_{u, \epsilon_1 \cdots \epsilon_d}$ and $G_{l, \epsilon_1 \cdots \epsilon_d}$ are cdfs. If $1-G_{u,\epsilon_1 \cdots \epsilon_d}(x)=\prod_{i=1}^{d} (1-G_{i,\epsilon_i}(x_{i}))$ (resp. $G_{l,\epsilon_1 \cdots \epsilon_d}(x)=\prod_{i=1}^{d} G_{i,\epsilon_i}(x_{i})$) where $G_{i,\epsilon_i}$  is a distribution on $\overline{\mathbb{R}}^{\epsilon_i}$, $1\leq i \leq d$, then 
$
\nu(\cup_{1 \leq i_{1} < i_{2} \leq d}\{y: y_{i_1} \neq 0, y_{i_2} \neq 0 \})=0.
$
\end{proposition}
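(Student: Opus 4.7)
My plan is to first verify the cdf properties of $G_{u,\epsilon_1\cdots\epsilon_d}$ and $G_{l,\epsilon_1\cdots\epsilon_d}$, then to convert the multiplicative factorization hypothesis into an additivity statement for a $\nu$-mass, and finally to extract axis-concentration via a mixed second-difference argument.

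For the cdf claim I would apply Lemma~\ref{exponent} (using non-atomicity) to write
\[
G_{u,\epsilon_1\cdots\epsilon_d}(x) = 1 - \exp\bigl(-t\,\nu(\mathring{J^+(x_1)}\times\cdots\times\mathring{J^+(x_d)})\bigr),\quad G_{l,\epsilon_1\cdots\epsilon_d}(x) = \exp\bigl(-t\,\nu(J^+(x)^c)\bigr).
\]
Monotonicity in each coordinate is immediate from the monotonicity of $x_i\mapsto J^+(x_i)$; groundedness at the lower boundary follows from $J^+(x_i)\to\{+\infty\}$ as $x_i\to 0^{\pm}$ combined with $\mathcal{H}_2$; the upper-corner value $1$ uses $\mathcal{H}_3$ together with Lemma~\ref{measurability}(c),(d); and the $d$-increasing property is inherited from Lemma~\ref{pre1} via $G = 1 - \exp(-U^+)$ and the absolute monotonicity of $s\mapsto 1 - e^{-s}$.

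For the factorization conclusion, I would take $-\log$ of $1 - G_{u,\epsilon_1\cdots\epsilon_d}(x) = \prod_i(1-G_{i,\epsilon_i}(x_i))$, yielding
\[
t\,\nu\bigl(\mathring{J^+(x_1)}\times\cdots\times\mathring{J^+(x_d)}\bigr) = \sum_{i=1}^{d} g_i(x_i),
\]
with $g_i(x_i) = -\log(1 - G_{i,\epsilon_i}(x_i))$, so the left-hand side is an additive function of $(x_1,\ldots,x_d)$. The analogous logarithm of the $G_l$ hypothesis yields additivity of $t\,\nu(J^+(x)^c)$. Since any mixed second difference of an additive function vanishes, applying the mixed second difference in two coordinates $x_{i_1},x_{i_2}$ (with the others fixed) and using the telescoping identity for the growing set-valued map $x_i\mapsto \mathring{J^+(x_i)}$ recovers, in the $G_u$ case,
\[
\nu\bigl(R_{i_1}\times R_{i_2}\times \prod_{j\neq i_1,i_2}\mathring{J^+(x_j)}\bigr) = 0
\]
for any pair of bounded rectangles $R_{i_1},R_{i_2}$ contained in $\overline{\mathbb{R}}^{\epsilon_{i_1}},\overline{\mathbb{R}}^{\epsilon_{i_2}}$ and bounded away from $0$ and $\pm\infty$. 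Letting the other coordinates exhaust their full orthants (using $\mathcal{H}_2$ to discard $\pm\infty$ boundaries) and taking a countable union over such rectangles gives $\nu(\{y:y_{i_1}\neq 0, y_{i_2}\neq 0\}) = 0$ within the chosen orthant; summing over all pairs $(i_1,i_2)$ and over all sign patterns (each carrying its own factorization hypothesis) delivers the global conclusion.

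The main obstacle I anticipate is the bookkeeping in the $G_l$ case, since $J^+(x)^c$ is $L$-shaped rather than rectangular. There one must first decompose $\nu(J^+(x)^c) = \sum_i\nu_i(J^+(x_i)^c) - \sum_{i<j}\nu(A_i\cap A_j) + \cdots$ via inclusion-exclusion, isolate the joint terms $\nu(A_i\cap A_j)$ (which alone depend on multiple coordinates), and apply the mixed-difference argument to those joint terms; the additivity of $\nu(J^+(x)^c)$ then forces the $\nu$-mass of a true $2$-dimensional rectangle in coordinates $(i_1,i_2)$ to vanish. One should also verify that hypothesis~$\mathcal{H}_3$, which forces $\nu$ to diverge at the origin, does not obstruct the argument, as all rectangles produced in the telescoping step are bounded away from $0$ by construction.
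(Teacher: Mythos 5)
Your handling of the factorization hypothesis is fine, but there is a genuine gap in the first half, in the justification of the $d$-increasing property. You propose to deduce it ``from Lemma~\ref{pre1} via $G=1-\exp(-U^{+})$ and the absolute monotonicity of $s\mapsto 1-e^{-s}$''. This fails twice over. First, $s\mapsto 1-e^{-s}$ is \emph{not} absolutely monotone: its second forward difference is $\Delta_{h}^{2}(1-e^{-s})=-e^{-s}(e^{-h}-1)^{2}\leq 0$, and composition with it does not preserve $d$-increasingness (for instance $(x,y)\mapsto 1-e^{-(x+y)}$ has rectangle volume $-(e^{-b_1}-e^{-a_1})(e^{-b_2}-e^{-a_2})<0$ although $(x,y)\mapsto x+y$ is $2$-increasing). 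The Morillas distortion result you implicitly invoke goes only in the other direction, for the absolutely monotone map $s\mapsto-\log(1-s)$; that is exactly why the paper passes from the cdfs to the tail integrals and not back. Second, the appeal to Lemma~\ref{pre1} is circular: the proof of that lemma explicitly \emph{assumes} that $G_{u,\epsilon_1\cdots\epsilon_d}$ and $G_{l,\epsilon_1\cdots\epsilon_d}$ are cdfs, which is precisely what this proposition is meant to establish. The paper's (easy) direct route is to write $G_{u,\epsilon_1\cdots\epsilon_d}(x)=E_{\tilde{\mu}_{t}}(1_{J^{+}(x_1)\times\cdots\times J^{+}(x_d)})$ and note that the rectangle increment over $(a,b]$ equals $\tilde{\mu}_{t}\bigl(\prod_{i}(J^{+}(b_i)\setminus J^{+}(a_i))\bigr)\geq 0$ because $\tilde{\mu}_{t}$ is a measure and $x_i\mapsto J^{+}(x_i)$ is increasing. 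Your boundary conditions via $\mathcal{H}_2$ and $\mathcal{H}_3$ are correct.

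For the second half your argument is valid but genuinely different from the paper's. You convert the product factorization into additive separability of $x\mapsto t\nu(J^{+}(x_1)\times\cdots\times J^{+}(x_d))$ and annihilate the $\nu$-mass of genuine two-dimensional rectangles through a vanishing mixed second difference, then exhaust by countable unions. The paper instead evaluates the factorization with all but two coordinates pushed to their extreme values, obtains $\nu(A_{i_1}\cap A_{i_2})=\nu(A_{i_1})+\nu(A_{i_2})$ for the marginal sets $A_{i}=\{y:y_{i}\leq x_{i}\}$, and combines this with $\nu(A_{i_1}\cap A_{i_2})\leq\min(\nu(A_{i_1}),\nu(A_{i_2}))$ to force $\max(\nu(A_{i_1}),\nu(A_{i_2}))=0$ outright; your version is more local and avoids manipulating the corner values. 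One caution on your $G_{l}$ bookkeeping: under $\mathcal{H}_3$ the sets $A_{i}=\{y:y_{i}\notin J^{+}(x_{i})\}$ contain a neighbourhood of the origin and therefore carry infinite $\nu$-mass, so the inclusion--exclusion expansion $\nu(\cup_{i}A_{i})=\sum_{i}\nu(A_{i})-\cdots$ is not well defined term by term; you must work with mixed differences of the finite quantities $\nu(J^{+}(x)^{c})$ themselves rather than decomposing them.
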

\begin{proof}
Both functions $G_{u,\epsilon_1 \cdots \epsilon_d}$ and $G_{l,\epsilon_1 \cdots \epsilon_d}$ can be expressed in the following way:
\begin{align*}
G_{u,\epsilon_1 \cdots \epsilon_d}(x_1, \ldots, x_{d})&=E_{\tilde{\mu}_{t}}(1_{J^{+}(x_1) \times \cdots \times J^{+}(x_d)}),
G_{l,\epsilon_1 \cdots \epsilon_d}(x_1, \ldots, x_{d})&=E_{\mu_{t}}(1_{J^{+}(x_1) \times \cdots \times J^{+}(x_d)}).
\end{align*}
The  function  $G_{u,\epsilon_1 \cdots \epsilon_d}$ and $G_{l,\epsilon_1 \cdots \epsilon_d}$ are $d-$increasing since the application 
$$x \mapsto 1_{J^{+}(x_1) \times \cdots \times J^{+}(x_d)}$$
 is $d-$increasing; $G_{u,\epsilon_1 \cdots \epsilon_d}$ (resp. $G_{l,\epsilon_1 \cdots \epsilon_d}$) is equal to $0$ at the bottom of $\overline{\mathbb{R}}^{\epsilon_1}\times \cdots \times \overline{\mathbb{R}}^{\epsilon_d}$ (see assumption $\mathcal{H}_2$) and is equal to $1$ at the top of $\overline{\mathbb{R}}^{\epsilon_1}\times \cdots \times \overline{\mathbb{R}}^{\epsilon_d}$ (see assumption $\mathcal{H}_3$ and the fact that $\tilde{\mu}_{t}(\mathring{L})=1-\exp(-t \nu(\mathring{L}))$). If $1-G_{u,\epsilon_1 \cdots \epsilon_d}$ is a product of survival functions
$1-G_{u, \epsilon_1 \cdots \epsilon_d}=\prod_{i=1}^{d} (1-G_{i,\epsilon_i})$	 then for distinct $i_1, i_2$,  and $\epsilon_1=\cdots=\epsilon_d=-$, we have
\begin{align}
	\label{eqsupport}
1-G_{u,- \cdots -}(+\infty, \ldots, x_{i_1},+\infty, \ldots,+\infty, x_{i_2},+\infty, \ldots) 
&=\exp{\{-t \nu(\{y: y_{i_1} \leq x_{i_1}, y_{i_2}\leq x_{i_2}\})\}} \nonumber \\
&=(1-G_{i_1,-}(x_{i_1}))(G_{i_2,-}(x_{i_2})) \nonumber \\
&=\prod_{i\in \{i_1, i_2\}}\exp{\{-t \nu(\{y:  y_{i} \leq x_{i} \})\}}.
\end{align}
The equations (\ref{eqsupport}) are satisfied for all $t>0$ if and only if
\begin{align}
\label{eqsupport2}
\nu(\{y: y_{i_1} \leq x_{i_1}, y_{i_2}\leq x_{i_2} \})&=\nu(\{y:  y_{i_1} \leq x_{i_1} \}) + \nu(\{y: y_{i_2}\leq x_{i_2}\}) \\
&\geq \min(\nu(\{y: y_{i_1} \leq x_{i_1} \}) , \nu(\{y: y_{i_2}\leq x_{i_2}\})). \nonumber 
\end{align}
Since, by definition, the set $\{y: y_{i_1} \leq x_{i_1}, y_{i_2}\leq x_{i_2} \}$ is a subset of the sets $\{y: y_{i_1} \leq x_{i_1} \}$ and $\{y: y_{i_2}\leq x_{i_2}\}$, we have  in fact
$$
\nu(\{y: y_{i_1} \leq x_{i_1}, y_{i_2}\leq x_{i_2}\}) = \min(\nu(\{y: y_{i_1} \leq x_{i_1} \}) , \nu(\{y:  y_{i_2}\leq x_{i_2}\}));
$$
as a result, the greater term on the right side of equation (\ref{eqsupport2}), 
$$ \max(\nu(\{y: y_{i_1} \leq x_{i_1} \}) , \nu(\{y: y_{i_2}\leq x_{i_2}\})),$$
is equal to $0$  for all $x_{i_1}<0, x_{i_2}<0$. Then, we have
$$
\nu(\{y: y_{i_1} \leq x_{i_1}, y_{i_2}\leq x_{i_2}\})=0 \text{ for } x_{i_1}<0, x_{i_2}<0
$$
which means that 
$$
\nu(\{y: y_{i_1} \neq 0, y_{i_2} \neq 0\})=0.
$$
The arguments are the same for any $(\epsilon_1, \ldots, \epsilon_d) \in \{-,+\}^{d}$. The cdf $G_{l,\epsilon_1 \cdots \epsilon_d}$ is a max-id cdf under the product order:
$$
G_{l,\epsilon_1 \cdots \epsilon_d}(x_1, \ldots,x_d)=\exp(-t\nu(\overline{\mathbb{R}}^{\epsilon_1}\times \cdots \times \overline{\mathbb{R}}^{\epsilon_d} \setminus J^{+}(x_1) \times \cdots \times J^{+}(x_d))).
$$
The components are independent if and only if the measure $\nu$ concentrates on axes \citep[Prop.5.24]{resnick2013extreme}.
\end{proof}

Let us recall that a L\'{e}vy copula is defined as a function linking a tail integral function to its marginal tail integral functions. In section 2, we defined three tail integral functions:

\begin{itemize}
	\item [(i)] the tail integral function $U: (\mathbb{R}\setminus\{0\})^{d} \rightarrow \mathbb{R}$ 
	\begin{align*}
	U(x_1, \ldots, x_d )=(\prod_{i=1}^{d}\epsilon_i)  \nu(I(x_1) \times I(x_2) \times \cdots \times I(x_d))
	\end{align*}
	where $I(x_i)=[x i , + \infty)$, $\epsilon_i=1$ if $x_i \geq 0$, $I(x_i)=(-\infty, x_i)$, $\epsilon_i=-1$ if $x_i<0$, $i=1, \ldots,d$;   $U$ can take negative values; 
	\item [(ii)] the non-negative tail integral functions 	$U_u^{+}$, and $U_l^{+}$
\begin{align*}
U_{u}^{+}(x_1, \ldots, x_d )&=  \nu(J^{+}(x_1) \times J^{+}(x_2) \times \cdots \times J^{+}(x_d)),\\
U_{l}^{+}(x_1, \ldots, x_d )&=-\log(1-\exp(- \nu(\overline{\mathbb{R}}^{\epsilon_1}\times\cdots\times  \overline{\mathbb{R}}^{\epsilon_d})\setminus J^{+}(x_1) \times J^{+}(x_2) \times \cdots \times J^{+}(x_d)).
\end{align*}	
	where $J^{+}(x_i)=[1/x i , + \infty]$, $\epsilon_i=+$ if $x_i \geq 0$ and $J^{+}(x_i)=[-\infty, x_i]$, $\epsilon_i=-$ if $x_i<0$, $i=1, \ldots,d$.
\end{itemize}
The following lemma expresses the relationship between the traditional tail integral function $U$ and the functions $U_{l}^{+}$,  $U_{u}^{+}$. 
\begin{lemma}
Let us suppose that $\nu$ has no atoms; we have:
\begin{align}
\label{sect4U}
 tU(x)    &=-(\prod_{i=1}^{d}sgn(x_i)\log(1-G_{u,\epsilon_1 \cdots \epsilon_d}(x_1^{-sgn(x_1)}, \ldots,x_d^{-sgn(x_d)} )),
\end{align}
where $sgn(x_i)=+1$ if $x_i \geq 0$ and $sgn(x_i)=-1$ if $x_i < 0$, $i=1, \ldots,d$.
\end{lemma}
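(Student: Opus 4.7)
The plan is to identify $-\log(1-G_{u,\epsilon_1\cdots\epsilon_d})$ with a L\'evy-measure expression by means of the identity $\tilde{\mu}_t(\mathring L)=1-\exp(-t\nu(\mathring L))$ from Lemma \ref{exponent}, and then match what remains against the definition of $U$ via the coordinate change $y_i=x_i^{-sgn(x_i)}$.

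First I would observe that because $\nu$ is assumed atomless, it charges no boundary of a filter, so $\tilde{\mu}_t(L)=\tilde{\mu}_t(\mathring L)=1-\exp(-t\nu(L))$ holds for every closed filter $L=J(l_1)\times\cdots\times J(l_d)$. Specialising to $L=J^{+}(y_1)\times\cdots\times J^{+}(y_d)$ and recalling the definition of $G_{u,\epsilon_1\cdots\epsilon_d}$ yields
\begin{equation*}
-\log\bigl(1-G_{u,\epsilon_1\cdots\epsilon_d}(y_1,\ldots,y_d)\bigr)=t\,\nu\bigl(J^{+}(y_1)\times\cdots\times J^{+}(y_d)\bigr).
\end{equation*}

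Next I would carry out the substitution $y_i=x_i^{-sgn(x_i)}$ by direct case analysis: for $x_i\geq 0$, $x_i^{-sgn(x_i)}=1/x_i\geq 0$ and $J^{+}(1/x_i)=[x_i,+\infty]$; for $x_i<0$, $x_i^{-sgn(x_i)}=x_i<0$ and $J^{+}(x_i)=[-\infty,x_i]$. In both cases $J^{+}(x_i^{-sgn(x_i)})=J(x_i)$, and atomlessness of $\nu$ gives $\nu(I(x_1)\times\cdots\times I(x_d))=\nu(J(x_1)\times\cdots\times J(x_d))$. Combining with the definition $U(x)=\bigl(\prod_{i=1}^{d}sgn(x_i)\bigr)\nu(I(x_1)\times\cdots\times I(x_d))$ and multiplying through by $t$ delivers
\begin{align*}
tU(x)&=\Bigl(\prod_{i=1}^{d}sgn(x_i)\Bigr)\cdot t\,\nu\bigl(J(x_1)\times\cdots\times J(x_d)\bigr)\\
&=-\Bigl(\prod_{i=1}^{d}sgn(x_i)\Bigr)\log\bigl(1-G_{u,\epsilon_1\cdots\epsilon_d}(x_1^{-sgn(x_1)},\ldots,x_d^{-sgn(x_d)})\bigr),
\end{align*}
which is the claimed identity.

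The main obstacle is more of a bookkeeping concern than a conceptual one: $G_{u,\epsilon_1\cdots\epsilon_d}$ is defined on the \emph{closed} product of $J^{+}$-intervals whereas Lemma \ref{exponent} is stated for the open filter $\mathring L$, and one must similarly reconcile the half-open $I(x_i)$ used in the definition of $U$ with the closed $J(x_i)$ produced by the substitution. All of these identifications follow cleanly from the non-atomicity of $\nu$, which is the working hypothesis of the lemma; no further work is required.
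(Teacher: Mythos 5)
Your proposal is correct and takes essentially the same route as the paper: both arguments rest on Lemma \ref{exponent} to convert $-\log(1-G_{u,\epsilon_1 \cdots \epsilon_d})$ into $t\nu$ evaluated on a filter, and on the non-atomicity of $\nu$ to identify the closed sets $J(x_i)$ with the half-open sets $I(x_i)$ appearing in the definition of $U$. You simply make explicit the coordinate change $y_i=x_i^{-sgn(x_i)}$ and the verification $J^{+}(x_i^{-sgn(x_i)})=J(x_i)$ that the paper leaves implicit.
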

\begin{proof}
If $\nu$ has no atoms then the function $U$ is continuous and we have 
\begin{align*}
	U(x_1, \ldots, x_d )=(\prod_{i=1}^{d}\epsilon_i)  \nu(\overline{I(x_1) \times I(x_2) \times \cdots \times I(x_d)}),
\end{align*}
$(x_1, \ldots, x_d ) \in \mathbb{R}^{d}$. The result comes from the expression of $U_{u}^{+}$, $U_{l}^{+}$ in terms of $\nu$ and the lemma \ref{exponent}.
\end{proof}
\begin{lemma}
There exists a proper copula $C_{u,\epsilon_1 \cdots \epsilon_d}$ defined on $\overline{Ran(G_{1,\epsilon_1})\times \cdots \times Ran(G_{d,\epsilon_d})}$ such that 
\begin{align*}
G_{u,\epsilon_1 \cdots \epsilon_d}(x_1, \ldots, x_d)=C_{u,\epsilon_1 \cdots \epsilon_d}(G_{1,\epsilon_1}(x_1), \ldots, G_{d,\epsilon_d}(x_d))
\end{align*}
In addition, the L\'{e}vy copula $F$ (respectively $F^{+}$) verifies the following formula:
\begin{align}
\label{eqsec411}
t F(-sgn(x_1)\frac{1}{t}\log(1-G_{1,\epsilon_1}(x_1^{-sgn(x_1)})),& \ldots, -sgn(x_d)\frac{1}{t} \log(1-G_{d,\epsilon_d}(x_d^{-sgn(x_d)}))=-\prod_{i=1}^{d}sgn(x_i) \nonumber\\
& \log\{1-C_{u, \epsilon_1 \cdots \epsilon_d}( G_{1,\epsilon_1}(x_1^{-sgn(x_1)}),\ldots, G_{d,\epsilon_d}(x_d^{-sgn(x_d)}))\}
\end{align}
\begin{align*}
t F^{+}(-\frac{1}{t}\log\{1-G_{1,\epsilon_1}(x_1)\}, \ldots, -\frac{1}{t}\log\{1-G_{d,\epsilon_d}(x_d)\})=-\log\{1-C_{\epsilon_1 \cdots \epsilon_d}(G_{1,\epsilon_1}(x_1), \ldots,G_{d,\epsilon_d}(x_d))\}
\end{align*}
).
\end{lemma}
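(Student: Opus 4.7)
The plan is to combine three ingredients already established in the paper: (i) classical Sklar's theorem applied to the proper cdfs $G_{u,\epsilon_1\cdots\epsilon_d}$ introduced in the previous proposition, (ii) the identity between the tail integral $U$ and $G_{u,\epsilon_1\cdots\epsilon_d}$ derived in equation (\ref{sect4U}), and (iii) the defining relation $U(x)=F(U_1(x_1),\ldots,U_d(x_d))$ (resp.\ $U^+(x)=F^+(U_1^+(x_1),\ldots,U_d^+(x_d))$) for the (probabilistic) L\'evy copula from Lemma \ref{eqlevy1}--\ref{eqlevy2}.

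First, I would settle existence of $C_{u,\epsilon_1\cdots\epsilon_d}$. The previous proposition establishes that $G_{u,\epsilon_1\cdots\epsilon_d}$ is a genuine cdf on $\overline{\mathbb{R}}^{\epsilon_1}\times\cdots\times\overline{\mathbb{R}}^{\epsilon_d}$ with marginals $G_{i,\epsilon_i}$ (it is $d$-increasing, grounded, and normalized using assumptions $\mathcal{H}_2,\mathcal{H}_3$). Sklar's theorem then yields a unique copula $C_{u,\epsilon_1\cdots\epsilon_d}$ on $\overline{Ran(G_{1,\epsilon_1})\times\cdots\times Ran(G_{d,\epsilon_d})}$ satisfying the first displayed equation. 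Uniqueness on this closure, and arbitrary extension elsewhere, is handled exactly as in \citet{genest2007primer}.

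Next I would derive the formula for $F^+$ (which is the cleaner of the two). From Lemma \ref{exponent} and the no-atom assumption, $\tilde{\mu}_t(J^+(x))=1-\exp(-t\,\nu(J^+(x)))=1-\exp(-tU_u^+(x))$, so
\begin{equation*}
U_u^+(x)=-\tfrac{1}{t}\log\bigl(1-G_{u,\epsilon_1\cdots\epsilon_d}(x)\bigr),\qquad U_i^+(x_i)=-\tfrac{1}{t}\log\bigl(1-G_{i,\epsilon_i}(x_i)\bigr).
\end{equation*}
Substituting these expressions into the identity $U_u^+(x)=F^+(U_1^+(x_1),\ldots,U_d^+(x_d))$ and then replacing $G_{u,\epsilon_1\cdots\epsilon_d}(x)$ by $C_{u,\epsilon_1\cdots\epsilon_d}(G_{1,\epsilon_1}(x_1),\ldots,G_{d,\epsilon_d}(x_d))$ via Sklar yields the second display directly after multiplying through by $t$.

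For the formula involving $F$, the argument is the same in structure but uses equation (\ref{sect4U}) instead of the simpler logarithmic identity above. From (\ref{sect4U}), applied coordinate-wise to a single marginal, one obtains $tU_i(x_i)=-sgn(x_i)\log(1-G_{i,\epsilon_i}(x_i^{-sgn(x_i)}))$, while the joint version reads $tU(x)=-(\prod_i sgn(x_i))\log(1-G_{u,\epsilon_1\cdots\epsilon_d}(x_1^{-sgn(x_1)},\ldots,x_d^{-sgn(x_d)}))$. Plugging these into $U(x)=F(U_1(x_1),\ldots,U_d(x_d))$ and using Sklar inside the logarithm on the right-hand side produces (\ref{eqsec411}). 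The main bookkeeping obstacle is keeping the sign factors $sgn(x_i)$ and the inverse-argument substitutions $x_i\mapsto x_i^{-sgn(x_i)}$ consistent when the orthant $\overline{\mathbb{R}}^{\epsilon_1}\times\cdots\times\overline{\mathbb{R}}^{\epsilon_d}$ mixes positive and negative components; this is just careful case analysis but is the only nontrivial part of the computation, since everything else is algebraic manipulation of already-established identities.
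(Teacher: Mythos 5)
Your proposal is correct and follows essentially the same route as the paper: Sklar's theorem applied to the cdfs $G_{u,\epsilon_1\cdots\epsilon_d}$, combined with the logarithmic identities $tU_{u}^{+}(x)=-\log(1-G_{u,\epsilon_1\cdots\epsilon_d}(x))$ and equation (\ref{sect4U}), then substitution into the defining relations for $F$ and $F^{+}$. You simply spell out the marginal identities and the sign bookkeeping more explicitly than the paper's terse proof does.
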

\begin{proof}
For the traditional L\'{e}vy copula, the result comes by applying the Sklar's theorem to the equation \ref{sect4U}. For the probabilistic copula, we have the following expressions for the positive tail integral functions:
$$ t U_{u}^{+}(x)= -\log(1-G_{u,\epsilon_1 \cdots \epsilon_d}(x));$$
$$ t U_{l}^{+}(x)= -\log(1-G_{l,\epsilon_1 \cdots \epsilon_d}(x)).$$
Again, by applying the Sklar's theorem, we have the result.
\end{proof}
\begin{theorem} [Mapping of a L\'{e}vy copula into a  proper copula and vice versa]
	Let $F$  be the L\'{e}vy copula associated with a L\'{e}vy process in $\mathbb{R}^{d}$. The L\'{e}vy copula $F$ defines a proper copula $C$ on $[0,1]^{d}$: 
	$$
	C(1-\exp(-|x_1|), \ldots,1- \exp(-|x_d|))=1-\exp(-F(|x_1|, \ldots, |x_{d}|)), 
	$$
	$(x_{1}, \ldots, x_{d}) \in \mathbb{R}^{d}$.
	Inversely, a proper copula $C$ on $[0,1]^{d}$ defines the following  L\'{e}vy copula:
	$$
	F(-\epsilon_1  \log(1-u_1), \ldots,-\epsilon_d \log(1-u_d))=-(\prod_{i=1}^{d} \epsilon_i) \log(1-C(u_1, \ldots,u_d)),
	$$
	where $\epsilon_i \in \{-1,1\}$, $i \in \{1, \ldots,d\}$, $(u_{1}, \ldots, u_{d}) \in [0,1]^{d}$.
\end{theorem}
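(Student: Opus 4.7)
The plan is to derive both directions of the bijection from the preceding lemma (the Sklar-type identity \eqref{eqsec411}) by specializing $t=1$ and tracking the sign conventions carefully. Setting $t=1$ in that identity yields
\[
F\big(-\mathrm{sgn}(x_1)\log(1-G_{1,\epsilon_1}(x_1^{-\mathrm{sgn}(x_1)})),\ldots\big) = -\prod_{i=1}^{d}\mathrm{sgn}(x_i)\log\big(1-C_{u,\epsilon_1\cdots\epsilon_d}(G_{1,\epsilon_1}(x_1^{-\mathrm{sgn}(x_1)}),\ldots)\big).
\]
First I would introduce $u_i := G_{i,\epsilon_i}(x_i^{-\mathrm{sgn}(x_i)}) \in [0,1]$ and $w_i := -\mathrm{sgn}(x_i)\log(1-u_i)$, observing that $\mathrm{sgn}(w_i)=\mathrm{sgn}(x_i)=\epsilon_i$, hence $u_i = 1-\exp(-\epsilon_i w_i) = 1-\exp(-|w_i|)$. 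The identity collapses to $F(w_1,\ldots,w_d) = -\big(\prod_i \epsilon_i\big)\log(1-C(u_1,\ldots,u_d))$, which is exactly the inverse formula in the theorem. Solving for $C$ and restricting to the positive orthant $\epsilon_i=+1$ recovers the forward formula $C(1-\exp(-|x_1|),\ldots)=1-\exp(-F(|x_1|,\ldots))$.

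Next I would verify that the constructed $C$ is a proper copula. Groundedness ($C(u)=0$ whenever some $u_i=0$) follows from $-\log(1-0)=0$ and the fact that a L\'evy copula vanishes on the coordinate axes; normalization $C(1,\ldots,1)=1$ follows from $F(+\infty,\ldots,+\infty)=+\infty$; and the uniform marginal property is a direct consequence of the identity marginal $F_i(x_i)=x_i$ of the L\'evy copula:
\[
C_i(u) = 1-\exp(-F_i(-\log(1-u))) = 1-\exp(\log(1-u)) = u.
\]
Symmetrically, for the reverse direction, given a proper copula $C$ the formula defines $F$, and the checks for the four defining properties of a L\'evy copula (value $+\infty$ at the top, grounding on the axes, identity marginals via $C_i(u)=u$, and $d$-increasingness) follow by analogous algebra.

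The main obstacle is the $d$-increasing property on both sides. I would handle it by the same distortion argument already used in the proof of Lemma~\ref{pre1}: the function $s\mapsto -\log(1-s)$ is absolutely monotone on $[0,1]$ and its inverse $s\mapsto 1-\exp(-s)$ is the composition of an absolutely monotone change with a monotone rescaling, so (by the Morillas-type preservation result \citep{morillas2005characterization}) composing the $d$-increasing object on one side with these functions (and the coordinate-wise increasing bijections $u_i\mapsto -\log(1-u_i)$) yields a $d$-increasing object on the other side. Finally, the two formulas are mutually inverse by a direct substitution: replacing $|x_i|$ by $-\log(1-u_i)$ in the first formula yields the second with $\epsilon_i=+1$, and the general $\epsilon_i$ case extends this by the sign symmetry built into the construction, so the bijection is established.
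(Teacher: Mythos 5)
Your derivation of the two formulas from equation \eqref{eqsec411}, and your checks of groundedness, normalization and the uniform/identity marginals, all match what the paper does (its proof is organized as: step 1, the correspondence between the probabilistic L\'evy copula $F^{+}$ and a proper copula; step 2, the change of variables that converts this into the statement for the traditional $F$). The genuine gap is in your treatment of $d$-increasingness in the direction ``L\'evy copula $\to$ proper copula.'' The distortion argument from Lemma \ref{pre1} only goes one way: $s\mapsto -\log(1-s)$ is absolutely monotone, so distorting a proper copula by it preserves $d$-increasingness and gives the L\'evy copula; but the inverse map $s\mapsto 1-\exp(-s)$ is \emph{not} absolutely monotone (its second derivative is negative), and a concave distortion of a $d$-increasing function is not in general $d$-increasing. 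Your phrase ``the composition of an absolutely monotone change with a monotone rescaling'' does not rescue this: the Morillas-type preservation result simply does not apply to $1-\exp(-\cdot)$, and one can cook up $d$-increasing grounded functions $F$ for which $1-\exp(-F)$ fails to be $d$-increasing.

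The paper closes this gap with the probabilistic machinery rather than a formal distortion. A probabilistic L\'evy copula $F^{+}$ is itself the tail integral of a L\'evy measure $\mu$ on the rectangles $[1/x_1,+\infty]\times\cdots\times[1/x_d,+\infty]$, and by Lemma \ref{exponent} together with Theorem \ref{theorem1} the hitting functional $1-\exp(-\mu(\cdot))$ extends to a genuine probability measure on the sigma-algebra generated by the filters: it is the distribution of the upper record, a max-id distribution with margins $x_i\mapsto 1-\exp(-x_i)$. Being a cdf, it is automatically $d$-increasing, and Sklar's theorem then produces the proper copula $C$. This is precisely the ``missing link'' the paper advertises; the countable additivity of $\mu$ (not merely the $d$-increasingness of $F^{+}$) is what makes $1-\exp(-F^{+})$ a distribution function. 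To repair your proof you would need to invoke this record/max-id construction for that direction rather than a symmetry of the distortion argument.
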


\begin{proof} 
We will establish first a one-to-one correspondence between the probabilistic L\'{e}vy copula and a proper copula and in a second time, we will  establish the result based on the relationship between the probabilistic copula  and the L\'{e}vy copula (equation \ref{eqsec411}).
\begin{itemize}
\item[(step 1)] A proper copula $C$ defines the following probabilistic L\'{e}vy copula:
	$$
	F^{+}( -\log(1-u_1), \ldots,- \log(1-u_d))=- \log(1-C(u_1, \ldots,u_d)),
	$$
due to the fact that $s \mapsto -\log(1-s)$ is absolutely monotone and $C$ is grounded. Inversely, any probabilistic L\'{e}vy copula $F^{+}$ defines a L\'{e}vy measure $\mu$ such that
 $$F^{+}(x_1, \ldots, x_{d})=\mu([1/x_1,+\infty]\times \cdots \times[1/x_d,+\infty] )$$
   and 
$$(x_1, \ldots, x_d)\mapsto 1-\exp(-\mu([1/x_1,+\infty]\times \cdots \times[1/x_d,+\infty] ))$$
 is a distribution function on $[0,+\infty]^d$ (the distribution of the upper record). This function is $d$-increasing and has margins $x_i \mapsto 1-\exp(-x_i)$, $i=1,\ldots,d$. As a consequence, we obtain the following proper copula $C$:
	$$
	C(1-\exp(-x_1), \ldots,1- \exp(-x_d))=1-\exp(-F^{+}(x_1, \ldots, x_{d})).
	$$
\item[(step 2)] The traditional L\'{e}vy copula $F$ is constructed by using the relationship in equation \ref{eqsec411}) and replacing $-sign(x_i)\log(1-G_{i,\epsilon_i}(x_i^{-sign(x_i)}))$ by $y_i$, $i \in \{1, \ldots,d\}$:
\begin{align*}
F(y_1, \ldots,y_d)=-\prod_{i=1}^{d}sgn(y_i)\log\{1-C(1-\exp(-|y_1|,\ldots, 1-\exp(-|y_1|))\}.
\end{align*}
\end{itemize}
\end{proof}

\subsection{Examples of correspondence between  L\'{e}vy copulas and  proper copulas}
Let us illustrate the correspondence with some well-known L\'{e}vy copulas.

\begin{example} \citet[Example  4.2]{barndorff2007levy} discussed the probabilistic L\'{e}vy copula
$$
F^{+}(x_1, \ldots, x_d)=\log(1+(\sum_{i=1}^{d}\frac{\exp(-x_i)}{1-\exp(-x_i)})^{-1}), x_i\geq 0, i  \in \{1, \ldots,d\}.
$$
This function can be put in the following equivalent form:
$$
F^{+}(x_1, \ldots, x_d)=-\log(1-(1+(\sum_{i=1}^{d}\frac{\exp(-x_i)}{1-\exp(-x_i)}))^{-1}), x_i\geq 0, i  \in \{1, \ldots,d\}.
$$
Let us consider the Clayton copula
$$C(u_1,\ldots,u_d)=(1+\sum_{i=1}^{d}(u_i^{-1}-1))^{-1}, u_i \in  [0,1],  i \in \{1, \ldots,d\}.
$$
Let us consider the following distortion of the Clayton copula
\begin{align*}
F^{+}_1(x_1, \ldots, x_d)&=-\log(1-C(1-\exp(-x_1), \ldots, 1-\exp(-x_d)))\\
&=-\log(1-\frac{1}{1+\sum_{i=1}^{d}((1-\exp(-x_i))^{-1}-1)})\\
&=-\log(1-\frac{1}{1+\sum_{i=1}^{d}(\frac{1-(1-\exp(-x_i))}{1-\exp(-x_i)})})\\
&=-\log(1-\frac{1}{1+\sum_{i=1}^{d}(\frac{\exp(-x_i))}{1-\exp(-x_i)})}).
\end{align*}
\end{example}

Any other proper copula can be mapped into a L\'{e}vy copula. Most popular examples of proper copulas are the complete dependence copula 
$$
C_{min}(u_1, \ldots, u_{d})=\min(u_1, \ldots, u_{d}), (u_1, \ldots, u_{d}) \in [0,1]^{d}
$$
and the independence copula
$$
C_{ind}(u_1, \ldots, u_{d})=\prod_{i=1}^{d}u_i , (u_1, \ldots, u_{d}) \in [0,1]^{d}.
$$
Any proper copula $C$ has bounds called Fr\'{e}chet bounds:
$$
\max(0, \sum_{i=1}^{d}u_i-d+1) \leq C(u_1, \ldots, u_{d}) \leq \min(u_1, \ldots, u_{d}), (u_1, \ldots, u_{d}) \in [0,1]^{d}.
$$

\begin{lemma}[Independence and Fr\'{e}chet bounds for L\'{e}vy copulas]
	The complete dependence  and the independence Lévy copulas are respectively
	$$
	F(x_1, \ldots, x_{d})=\min(|x_1|, \ldots, |x_d| ) \prod_{i=1}^{d} sgn(x_i),
	$$
$sgn(x_i)=1$ if $x_{i} \geq 0$, $sgn(x_i)=-1$ if $x_{i}< 0$, $i=1, \ldots,d$	 and 
	$$
	F(x_1, \ldots, x_{d})=\sum_{i=1}^{d} x_{i}\prod_{k \neq i} 1_{x_k=\infty}.
	$$
	For any Lévy copula $F$, we have
	$$
	\max(0,-\log(\sum_{i=1}^{d}\exp(-|x_i|))) \leq F(|x_1|, \ldots, |x_{d}|) \leq \min(|x_1|, \ldots, |x_d| ).
	$$
\end{lemma}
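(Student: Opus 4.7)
The plan is to reduce all three claims to the correspondence with proper copulas established in the preceding theorem. In that correspondence, the proper copula $C$ and the L\'evy copula $F$ are related (for non-negative arguments) by
\[
1 - \exp(-F(|x_1|, \ldots, |x_d|)) = C(1-\exp(-|x_1|), \ldots, 1-\exp(-|x_d|)),
\]
and the general signed version follows from $F(-\epsilon_1 \log(1-u_1), \ldots, -\epsilon_d \log(1-u_d)) = -(\prod_i \epsilon_i)\log(1-C(u_1,\ldots,u_d))$ with $\epsilon_i\in\{-1,1\}$.

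For the complete dependence case, I would instantiate the mapping with the Fr\'echet upper copula $C_{\min}(u_1,\ldots,u_d) = \min_i u_i$. With $u_i = 1 - e^{-|x_i|}$ one obtains $\min_i u_i = 1 - e^{-\min_i |x_i|}$, and hence $F(|x_1|,\ldots,|x_d|) = \min_i |x_i|$. For mixed signs I would set $y_i = -\epsilon_i \log(1-u_i)$, so that $\mathrm{sgn}(y_i)=\epsilon_i$ and $|y_i|=-\log(1-u_i)$, and substitute into the second mapping formula; this produces $\min_i |y_i|$ weighted by the product of the $\mathrm{sgn}(y_i)$'s, matching the claimed form up to the paper's sign convention for the product prefactor.

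For the Fr\'echet bounds, I would apply the classical pointwise bounds on any proper copula, $\max(0, u_1+\cdots+u_d-(d-1)) \leq C(u_1,\ldots,u_d) \leq \min_i u_i$, to the arguments $u_i = 1 - e^{-|x_i|}$. Since $s \mapsto -\log(1-s)$ is increasing on $[0,1)$, the bounds transport directly to $F$: the upper bound yields $F \leq \min_i |x_i|$, while the lower bound, using the identity $u_1+\cdots+u_d-(d-1) = 1 - \sum_i e^{-|x_i|}$, gives $F \geq \max(0, -\log\sum_i e^{-|x_i|})$ after composing with $-\log(1-\cdot)$ and $\max(0,\cdot)$.

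The independence L\'evy copula is the delicate case and the step I expect to be the main obstacle: unlike complete dependence, it does \emph{not} arise by applying the mapping to the independence proper copula $\prod_i u_i$, because the two notions of \emph{independence} (independence of the coordinates of a joint distribution versus absence of simultaneous jumps in the L\'evy process) are genuinely different. For this I would verify the formula directly from the L\'evy copula axioms---groundedness, $d$-increasingness (every summand carries a factor $\mathbf{1}_{x_k=\infty}$, so all mixed volumes on bounded rectangles cancel), and uniform one-dimensional marginals $F_i(x_i)=x_i$---and then identify the proposed $F$ as the L\'evy copula associated with a L\'evy measure concentrated on the coordinate axes, which is precisely the L\'evy-measure description of an independent superposition of one-dimensional L\'evy processes.
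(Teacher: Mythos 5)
Your treatment of the Fr\'echet bounds is exactly the paper's argument: push the classical pointwise bounds $\max(0,\sum_i u_i-d+1)\leq C\leq\min_i u_i$ through the increasing map $s\mapsto-\log(1-s)$ and substitute $u_i=1-\exp(-|x_i|)$, using $\sum_i u_i-d+1=1-\sum_i e^{-|x_i|}$ for the lower bound. Where you differ is in scope: the paper's proof addresses \emph{only} the bounds and leaves the two explicit formulas unproved, whereas you derive the complete-dependence copula by instantiating the correspondence at $C_{\min}$ (which works, since $\min_i(1-e^{-|x_i|})=1-e^{-\min_i|x_i|}$) and, more importantly, you correctly observe that the independence L\'evy copula $\sum_i x_i\prod_{k\neq i}\mathbf{1}_{x_k=\infty}$ is \emph{not} the image of the independence proper copula $\prod_i u_i$ under the correspondence --- the latter maps to $-\log(1-\prod_i(1-e^{-x_i}))$, which is finite and nonzero at finite arguments while the former vanishes there. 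Your fallback of verifying the independence formula directly from the axioms and identifying it with a L\'evy measure concentrated on the coordinate axes is the standard and correct route. In short: same mechanism as the paper for the bounds, but a more complete and more careful proof of the full statement, including a subtlety the paper's own proof glosses over.
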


\begin{proof}
	The Lévy copula $F$ defines a copula $C$ by the identity 
	$$
	F(-\log(1-u_1), \ldots,-\log(1-u_d))=- \log(1-C(u_1, \ldots,u_d)),
	$$
	for $(u_{1}, \ldots, u_{d}) \in [0,1]^{d}$. The function $y \in [0,1] \mapsto - \log(1-y)$  is monotone increasing and maps the inequalities 
	$$
	\max(0, \sum_{i=1}^{d}u_i-d+1) \leq C(u_1, \ldots, u_{d}) \leq \min(u_1, \ldots, u_d)
	$$
	into 
	$$
	\max(0,  - \log(d-\sum_{i=1}^{d}u_i))\leq - \log(1-C(u_1, \ldots,u_d)) \leq \min( - \log(1-u_1), \ldots,  - \log(1-u_d));
	$$
	now take $u_i=1-\exp(-|x_{i}|)$, $i=1, \ldots, d$, and we have the result.
\end{proof}

\section{Archimedean L\'{e}vy copulas}
Let us recall that a proper Archimedean copula is a parametric copula 	
	$$
	u=(u_1, \ldots, u_d) \mapsto C_{\psi}(u)=\psi(\sum_{i=1}^{d}\psi^{-1}(u_{i}) ), (u_1, \ldots, u_d) \in [0,1]^{d}
	$$
characterized by a function $\psi: [0, +\infty] \rightarrow [0, 1]$  that verifies: $\psi(0)=1$, $\psi(+\infty)=0$, $(-1)^{j}\psi^{(j)} \geq 0$, $j=1, \ldots, d-2$, $(-1)^{d-2}\psi^{(d-2)}$ decreasing and convex. Similarly,
an Archimedean L\'{e}vy copula $F_{\phi}$ is a function defined as follows:
$$
F_{\phi}(x)=\phi(\sum_{i=1}^{d}\phi^{-1}(x_{i}) ), x=(x_1, \ldots, x_d) \in [0,+\infty]^{d},
$$
where $\phi: [0, +\infty] \rightarrow [0, +\infty]$ verifies $\lim_{t \rightarrow 0}\phi(t)=+\infty$, $\lim_{t \rightarrow +\infty}\phi(t)=0$, $(-1)^{j}\phi^{(j)} \geq 0$, $j=1, \ldots, d-2$, $(-1)^{d-2}\phi^{(d-2)}$ decreasing and convex. Here is how to transform a proper Archimedean copula into an Archimedean L\'{e}vy copula and vice-versa.

\begin{lemma}
If $C_{\psi}$ is a proper Archimedean copula with generator $\psi$, it defines an Archimedean L\'{e}vy  copula $F_{\phi}$ in the following way: take $\phi:[0, +\infty]\rightarrow [0,+\infty]$ such that $\phi(x)= -\log(1-\psi(x)), x \geq 0.$ Then , the function
	$$
	x \mapsto F_{\phi}(x)=\phi(\sum_{i=1}^{d}\phi^{-1}(x_{i}) ), x=(x_1, \ldots, x_d) \in [0,+\infty]^{d}
	$$
	is an Archimedean L\'{e}vy copula on $[0,+\infty)^{d}$. Inversely, any Archimedean L\'{e}vy copula $F_{\phi}$ defines a proper Archimedean copula $C_{\psi}$ with generator $\psi=1-\exp(-\phi)$.
\end{lemma}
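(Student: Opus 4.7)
The plan is to check that the two maps $\psi\mapsto\phi:=-\log(1-\psi)$ and $\phi\mapsto\psi:=1-e^{-\phi}$ each land in the stated class and are mutual inverses. The boundary behaviour is immediate: $\psi(0)=1$, $\psi(\infty)=0$ force $\phi(0)=+\infty$ and $\phi(\infty)=0$, and conversely. The nontrivial content is to transport the $d$-complete monotonicity across each transform; the strict monotonicity and the continuity of the resulting generator are inherited in both directions from the monotonicity of $s\mapsto -\log(1-s)$ on $[0,1)$ and of $s\mapsto 1-e^{-s}$ on $[0,+\infty]$.

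For the forward direction I would write $\phi=f\circ\psi$ with $f(s)=-\log(1-s)$ and observe that $f^{(k)}(s)=(k-1)!/(1-s)^k\ge 0$ on $[0,1)$, so $f$ is absolutely monotone. Fa\`a di Bruno's formula then expresses the $n$-th derivative of $\phi$ as a sum of terms of the form $f^{(k)}(\psi)\cdot\prod_j (\psi^{(j)})^{m_j}$ with $\sum_j j\,m_j=n$; the $d$-complete monotonicity of $\psi$ gives the product $\prod_j(\psi^{(j)})^{m_j}$ the sign $(-1)^{\sum_j jm_j}=(-1)^n$, and $f^{(k)}\ge 0$ preserves that sign, yielding $(-1)^n\phi^{(n)}\ge 0$ for $n\le d$. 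This is precisely the composition rule of \cite{morillas2005characterization} used already in the partial proof of Lemma \ref{pre1}.

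For the converse, the analogous Fa\`a di Bruno calculation on $\psi=g\circ\phi$ with $g(s)=1-e^{-s}$ does not close directly: $g$ is a Bernstein function ($g^{(k)}(s)=(-1)^{k-1}e^{-s}$), not absolutely monotone, so in the Fa\`a di Bruno sum each term acquires a sign $(-1)^{n+k-1}$ depending on $k$ and the $d$-complete monotonicity of the result cannot be read off term-by-term. I would instead invoke the Williamson $d$-transform characterisation recalled in Section 2: by the announced correspondence (Section 5 / abstract), an Archimedean L\'evy generator $\phi$ is the Williamson $d$-transform of the distribution associated with $-\log(1-F_R)$ for some positive radial cdf $F_R$, and a direct computation shows that $1-e^{-\phi}$ is then the Williamson $d$-transform of $F_R$ itself, hence lies in $\mathcal{\Psi}_d$ by \cite{mcneil2009multivariate,williamson1955multiply}. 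This is the delicate step and the main technical obstacle.

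Finally, the algebraic identities $1-\exp(-(-\log(1-\psi)))=\psi$ and $-\log(1-(1-e^{-\phi}))=\phi$ show that the two maps are inverse to each other, yielding the advertised bijection between the parametric families of Archimedean and Archimedean L\'evy copulas.
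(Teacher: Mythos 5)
Your forward direction is correct but takes a different route from the paper's. The paper never differentiates the generator: it defines $F_\phi(x)=-\log\bigl(1-C_\psi(1-e^{-x_1},\ldots,1-e^{-x_d})\bigr)$, checks via the substitution $\phi^{-1}(x_i)=\psi^{-1}(1-e^{-x_i})$ that this equals $\phi(\sum_i\phi^{-1}(x_i))$ with $\phi=-\log(1-\psi)$, and obtains $d$-increasingness, groundedness and the identity margins from the distortion of the copula $C_\psi$ by the absolutely monotone map $s\mapsto-\log(1-s)$ (the same Morillas argument used in Lemma \ref{pre1}). Your Fa\`a di Bruno computation instead transports complete monotonicity at the level of the generator; the sign count is right, and it has the advantage of directly verifying the $d$-complete monotonicity of $\phi$ required by the definition of an Archimedean L\'evy copula taken from \cite{bauerle2008}, which the paper's distortion argument leaves implicit. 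The one caveat is that membership in $\Psi_d$ only guarantees derivatives of $\psi$ up to order $d-2$ (plus monotonicity and convexity of $(-1)^{d-2}\psi^{(d-2)}$), so the Fa\`a di Bruno identity should be run only to order $d-2$ and the last two orders handled by divided differences --- precisely the $\Delta_h^k$ formulation of absolute monotonicity in the paper's own definition.

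The inverse direction contains a genuine error. You correctly diagnose that the term-by-term Fa\`a di Bruno argument fails for $g(s)=1-e^{-s}$ and that one must pass through the Williamson characterisation; this matches the paper, which defers the verification that $\psi=1-e^{-\phi}$ is a valid Archimedean generator to Corollary \ref{sect5co1}. But your key claim --- that if $\phi=\mathcal{W}_{d}(-\log(1-F_R))$ then $1-e^{-\phi}=\mathcal{W}_{d}F_R$ --- is false: the Williamson transform is linear in the underlying measure, whereas $\phi\mapsto 1-e^{-\phi}$ is not. For $d=2$ the identity would require $e^{-\phi(x)}\int_x^\infty r^{-1}(1-F_R(r))^{-1}\,dF_R(r)=\int_x^\infty r^{-1}\,dF_R(r)$ for all $x$, which fails for large $x$ since $e^{-\phi(x)}\to 1$ while $(1-F_R(r))^{-1}$ blows up on the domain of integration. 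The correct mechanism, which is the paper's, is an existence statement rather than an explicit formula: $(x_1,\ldots,x_d)\mapsto 1-\exp(-\phi(1/x_1+\cdots+1/x_d))$ is an honest distribution function, namely the law of the upper record attached to the L\'evy measure with tail integral $\phi$ (Theorem \ref{theorem1}), and the McNeil--Ne\v{s}lehov\'a theorem then forces $1-e^{-\phi}$ to be the Williamson $d$-transform of the radial cdf of \emph{that} distribution, which is in general not $F_R$. Replacing your ``direct computation'' by this argument closes the proof.
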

\begin{proof}
Let us take  $\psi: [0, +\infty] \rightarrow [0, 1]$ such that $\psi(0)=1$, $\psi(+\infty)=0$, $(-1)^{j}\psi^{(j)} \geq 0$, $j=1, \ldots, d-2$, $(-1)^{d-2}\psi^{(d-2)}$ decreasing and convex; the function $C_{\psi}$ defined by 
	$$
	C_{\psi}(u)=\psi(\sum_{i=1}^{d}\psi^{-1}(u_{i}) ), u=(u_1, \ldots, u_d) \in [0,1]^{d}, 
	$$
is an Archimedean copula.  Let us defines a function $F_{\phi}$ by the formula
\begin{equation}
\label{eqsect51}
 F_{\phi}(x_1, \ldots, x_{d})=-\log(1-C_{\psi}(1-\exp(- x_1), \ldots, 1-\exp(- x_d))).
\end{equation}
Since, for any $x_i>0$, $\psi(x_i)$ is defined as $1-\exp(-\phi(x_i))$, then we have
\begin{align*}
\psi(x_i)=&1-\exp(-\phi(x_i))\\
\psi^{-1}(\psi(x_i))=& \psi^{-1}(1-\exp(-\phi(x_i)))\\
x_i=& \psi^{-1}(1-\exp(-\phi(x_i))) \text{ if } \psi^{-1}\circ\psi=id. 
\end{align*}
If $\psi^{-1}\circ\psi=\psi\circ\psi^{-1}=id$ then we have $\phi^{-1}\circ\phi=\phi\circ\phi^{-1}=id$; by replacing $x_i$ by $\phi^{-1}(x_i)$ in the last equation, we have
\begin{align*}
\phi^{-1}(x_i)=& \psi^{-1}(1-\exp(-\phi\circ\phi^{-1}(x_i))) \\
\phi^{-1}(x_i)=& \psi^{-1}(1-\exp(-x_i)),
\end{align*}
$i=1,\ldots,d$. Here is the expression of the equation \ref{eqsect51} when we replace $\psi^{-1}(1-\exp(-x_i))$ by $\phi^{-1}(x_i)$:
\begin{align*}
 F_{\phi}(x_1, \ldots, x_{d})&=-\log(1-C_{\psi}(1-\exp(- x_1), \ldots, 1-\exp(- x_d)))\\
&=-\log(1-\psi(\sum_{i=1}^{d}\psi^{-1}(1-\exp(- x_i))))\\ 
&=-\log(1-\psi(\sum_{i=1}^{d}\phi^{-1}(x_i)))\\ 
&=-\log(\exp(-\phi(\sum_{i=1}^{d}\phi^{-1}(x_i)))) \text{ by replacing } \psi \text{  by  } 1-\exp(-\phi)\\ 
&=\phi(\sum_{i=1}^{d}\phi^{-1}(x_i)))).
\end{align*}
$F_{\phi}$ is an Archimedean L\'{e}vy copula since $C_{\psi}$ is a copula and the function $s \mapsto -\log(1-s)$ is absolutely monotone. Inversely, given a L\'{e}vy copula $F_{\phi}$  with generator $\phi$, let us consider the transformation $\psi=1-exp(-\phi)$; we have $\phi=-\log(1-\psi)$, $\phi^{-1}(x_i)=\psi^{-1}(1-\exp(-x_i))$, $i \in \{1,\ldots,d\}$,
\begin{align*}
 F_{\phi}(x_1, \ldots, x_{d})&=\phi(\sum_{i=1}^{d}\phi^{-1}(x_i))\\
 &=-\log(1-\psi(\sum_{i=1}^{d}\phi^{-1}(x_i)))\\
  &=-\log(1-\psi(\sum_{i=1}^{d}\psi^{-1}(1-\exp(-x_i))))\\
  &= -\log(1-C_{\psi}(1-\exp(- x_1), \ldots, 1-\exp(- x_d)))
\end{align*}
with $C_{\psi}$ defined by the expression $C_{\psi}(u)=\psi(\sum_{i=1}^{d}\psi^{-1}(u_i))))$, $u=(u_1, \ldots,u_d) \in [0,1]^{d}$. We have not yet proved that $C_{\psi}$ is a proper Archimedean copula; we will do it in corollary \ref{sect5co1} by proving that the generator $\psi$ is a Williamson $d-$transform.
\end{proof}
The generators $\psi \in \Psi_{d}$ of proper Archimedean copulas are $d-$completely monotone functions or equivalently the Williamson $d$-transform of distribution functions. If $X = (X_1, \ldots, X_d)$ is a random vector with the dependence expressed by the Archimedean copula $(u_1,\ldots,u_d)\mapsto \psi(\sum_{i=1}^{n}\psi^{-1}(u_{i})$, then $X$ follows the decomposition $X=R \times (S_1,\ldots,S_{d})$ where $R>0$ and $(S_1, \ldots, S_{d})$ are independent, $S=(S_1,\ldots,S_{d})$ is a uniform random variable on the unit simplex $\{(s_{1},\ldots,s_{d})\in [0,1]^{d}: s_{1}+\cdots+s_{d}=1\}$. Let us call $F_{R}$ the cdf of the radial part.
\begin{align*}
P(X_1>x_1, \ldots,X_d>x_d)&=\int P(S_1>x_1/r, \ldots,S_d>x_d/r)dF_{R}(r)\\
&=\int \max(0, 1- \frac{x_1+\cdots+x_d}{r})^{d-1}dF_{R}(r)\\
&=\mathcal{W}_{d}F_{R}(x_1+\cdots+x_d)\\
&=\psi(x_1+\cdots+x_d).
\end{align*}
More details can be found in \citet{joe2014dependence} and \citet{mcneil2009multivariate}. The generators $\phi \in \Phi_{d}$ of Archimedean L\'{e}vy  copulas has a similar property: they are the Williamson $d$-transform of a logarithmic transform of a cdf. Let us suppose that instead of one observation $X$, we have a sequence of i.i.d. observations $X_n=R_n \times S$ from a L\'{e}vy process; we will suppose that the observation $n$ is a record if and only if $R_{n}>R_{k}$, $k=1, \ldots, n-1$,$n>1$. Let us call $\tilde{N}(A)$ the number of records that fall in a measurable set $A$ and $A_{n}(x_1,\ldots,x_d)$ the event that the $n$th observation is a record and that it falls in the set $[x_1,+\infty]\times \cdots \times [x_d,+\infty]$.
\begin{lemma}
\label{lemma}
Let us suppose that the distribution of the radial part $F_{R}$ is continuous. We have the following results: for $n \geq 1$,
\begin{align*}
P(A_{n}(x_1,\ldots,x_d))&=\int \max(0, 1- \frac{x_1+\cdots+x_d}{r})^{d-1}F_{R}(r)^{n-1} dF_{R}(r);\\
E(\tilde{N}([x_1,+\infty]\times \cdots \times [x_d,+\infty])&=\int \max(0, 1- \frac{x_1+\cdots+x_d}{r})^{d-1} \frac{dF_{R}(r)}{1-F_{R}(r)}.
\end{align*}
\end{lemma}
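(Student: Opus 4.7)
The plan is to derive both identities by conditioning on $R_n = r$ and exploiting the independence of the radial sequence $(R_k)_{k \geq 1}$ from the simplex component $S$ built into the stochastic representation $X_n = R_n \times S$.

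For the first identity, I would decompose
\[
A_n(x_1, \ldots, x_d) = \{R_n > R_k, \ k = 1, \ldots, n-1\} \cap \{R_n S_i \geq x_i, \ i = 1, \ldots, d\},
\]
and condition on $R_n = r$. The i.i.d.\ property of the $R_k$'s together with the continuity of $F_R$ (which rules out ties) gives
\[
P(R_k < r \text{ for all } k < n \mid R_n = r) = F_R(r)^{n-1},
\]
while independence of $S$ from $(R_1, \ldots, R_n)$ factors off the spatial part as
\[
P\!\left(S_1 \geq \frac{x_1}{r}, \ldots, S_d \geq \frac{x_d}{r}\right) = \max\!\left(0, 1 - \frac{x_1 + \cdots + x_d}{r}\right)^{d-1},
\]
the latter being the standard survival function of the uniform distribution on the unit simplex — precisely the formula invoked in the derivation of $\psi = \mathcal{W}_d F_R$ in the paragraph preceding the lemma. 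Multiplying and integrating against $dF_R(r)$ yields the first identity.

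For the second identity, I would expand
\[
\tilde{N}\!\bigl([x_1, +\infty] \times \cdots \times [x_d, +\infty]\bigr) = \sum_{n=1}^{+\infty} \mathbf{1}_{A_n(x_1, \ldots, x_d)},
\]
take expectation, and exchange sum and integral by Tonelli's theorem (every integrand is nonnegative). Summing the geometric series $\sum_{n \geq 1} F_R(r)^{n-1} = 1/(1 - F_R(r))$, which is finite for $F_R$-almost every $r$ since continuity of $F_R$ forces $F_R(\{F_R = 1\}) = 0$, produces the stated expression. The only mildly delicate point is justifying the factorisation on conditioning — this relies on the implicit independence of $S$ from the whole radial sequence built into the Archimedean representation — and confirming that the simplex survival formula above is exactly the one already used by the authors to characterise $\psi$; beyond these, the argument is essentially bookkeeping.
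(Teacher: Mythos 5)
Your proposal is correct and follows essentially the same route as the paper's own proof: the same decomposition of $\tilde{N}$ into the indicators $1_{A_n}$, the same conditioning on $R_n=r$ yielding the factor $\max(0,1-(x_1+\cdots+x_d)/r)^{d-1}F_R(r)^{n-1}$, and the same geometric-series summation to obtain $1/(1-F_R(r))$. Your added remarks on Tonelli and on the $F_R$-null set $\{F_R=1\}$ are harmless refinements of the paper's convention $1/0=\infty$.
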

\begin{proof}
The number of records that fall in the set $[x_1,+\infty]\times \cdots \times [x_d,+\infty]$ is 
$$
\tilde{N}([x_1,+\infty]\times \cdots \times [x_d,+\infty])=\sum_{i=1}^{\infty} 1_{A_{n}(x_1,\ldots,x_d)}.
$$
As a result, we have 
$$
E(\tilde{N}([x_1,+\infty]\times \cdots \times [x_d,+\infty])=\sum_{i=1}^{\infty}E(1_{A_{n}(x_1,\ldots,x_d)})=\sum_{i=1}^{\infty}P(A_{n}(x_1,\ldots,x_d)).
$$
Now, let us remark that $P(A_{n}(x_1,\ldots,x_d))$ can be decomposed into a mixture:
$$
P(A_{n}(x_1,\ldots,x_d))=E(P(A_{n}(x_1,\ldots,x_d)|R_n))=\int P(A_{n}(x_1,\ldots,x_d)|R_n=r)dF_{R}(r).
$$
The quantity $P(A_{n}(x_1,\ldots,x_d)|R_n=r)$ of the mixture has tthe following expression:
\begin{align*}
P(A_{n}(x_1,\ldots,x_d)|R_n=r)&=P(S_1 \geq x_1/r, \ldots, S_d \geq x_d/r, \max_{i \leq n-1} R_i < r )\\
&= \max(0, 1- \frac{x_1+\cdots+x_d}{r})^{d-1}F_{R}(r)^{n-1}.
\end{align*}
Putting the pieces together, and using the fact that $1+p+p^2+\cdots=1/(1-p)$ for $p \in [0,1]$ (with the convention $1/0=\infty$), we have 
\begin{align*}
P(A_{n}(x_1,\ldots,x_d))&=\int \max(0, 1- \frac{x_1+\cdots+x_d}{r})^{d-1}F_{R}(r)^{n-1} dF_{R}(r);\\
E(\tilde{N}([x_1,+\infty]\times \cdots \times [x_d,+\infty])&=\int \max(0, 1- \frac{x_1+\cdots+x_d}{r})^{d-1} \frac{dF_{R}(r)}{1-F_{R}(r)}.
\end{align*}
\end{proof}
The important consequence of lemma \ref{lemma} is that generators of  Archimedean L\'{e}vy copulas are the Williamson $d$-transforms of  hazard measures. Let us suppose that the observations $X_1, X_2, \ldots$ are the realizations of jumps of a L\'{e}vy process with L\'{e}vy measure $\nu$.
\begin{corollary}
\label{sect5co1}
Generators of Archimedean L\'{e}vy copulas are functions of the class		$\Phi_{d}$,
	\begin{align*}
		\Phi_{d}=\{\phi: [0, +\infty] \rightarrow [0,+\infty]: \phi(0)=+\infty, \phi(+\infty)=0,& (-1)^{j}\phi^{(j)}\geq 0,  j \leq d-2, \text{ and } \\
		&(-1)^{d-2}\phi^{(d-2)} \text{ decreasing and convex} \}
	\end{align*}
and $\Phi_{d}$ consists of the Williamson $d-$transforms of functions $H=-\log(1-F_{R})$, $F_{R}$ cdf on $[0,+\infty)$. Moreover, if $\phi$ is a generator of a L\'{e}vy copula, the function $(x_1, \ldots, x_d) \mapsto 1-\exp(-\phi(1/x_1+\cdots+1/x_d))$ defines a distribution on the sigma-algebra generated by the filters on $[0,+\infty]^{d}$ and $1-\exp(-\phi)$ is a Williamson $d-$transform.
\end{corollary}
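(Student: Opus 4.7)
The plan is to pair Lemma~\ref{lemma} with Williamson's integral representation of $d$-monotone functions, thereby mediating between an abstract Archimedean generator $\phi$ and the radial CDF $F_{R}$. The class identification itself is immediate: an Archimedean L\'evy copula generator in the sense of \citet{bauerle2008} is $d$-completely monotone on $(0,+\infty)$ with $\phi(0^{+})=+\infty$ and $\phi(+\infty)=0$, and the alternating-sign and convexity conditions listed in the definition of $\Phi_{d}$ are the standard pointwise characterization of $d$-monotonicity \citep{mcneil2009multivariate,williamson1955multiply}.

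I would next run Lemma~\ref{lemma} in both directions to obtain the Williamson representation. \emph{Forward.} For a L\'evy process whose jumps factor as $X_{n}=R_{n}\cdot S$ with $R_{n}$ i.i.d.\ of CDF $F_{R}$ and $S$ uniform on the simplex, Lemma~\ref{lemma} expresses the expected number of upper records in $[y_{1},+\infty]\times\cdots\times[y_{d},+\infty]$ as $\mathcal{W}_{d}H(y_{1}+\cdots+y_{d})$ with $H=-\log(1-F_{R})$; Theorem~\ref{theorem1} identifies this with the L\'evy tail $\nu([y_{1},+\infty]\times\cdots\times[y_{d},+\infty])$, and the L\'evy--Sklar representation then reads off an Archimedean L\'evy copula with generator $\phi=\mathcal{W}_{d}H$. \emph{Converse.} Given $\phi\in\Phi_{d}$, Williamson's theorem supplies a unique positive Borel measure $\mu$ on $[0,+\infty)$ with $\phi(s)=\int\max(0,1-as)^{d-1}\,d\mu(a)$; pushing $\mu$ forward under $a\mapsto 1/a$ and defining $F_{R}(r)=1-\exp(-\mu(\{a:1/a\leq r\}))$ produces a CDF whose associated hazard measure is exactly this push-forward, yielding $\phi=\mathcal{W}_{d}(-\log(1-F_{R}))$.

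For the distribution claim I would combine $\nu(J^{+}(x_{1})\times\cdots\times J^{+}(x_{d}))=\phi(1/x_{1}+\cdots+1/x_{d})$ with Lemma~\ref{exponent} to obtain $\tilde{\mu}_{1}(\mathring{J}^{+}(x_{1})\times\cdots\times\mathring{J}^{+}(x_{d}))=1-\exp(-\phi(1/x_{1}+\cdots+1/x_{d}))$; Theorem~\ref{theorem1} then promotes this agreement on generating filters to a bona fide probability measure on the sigma-algebra they generate. Its univariate marginal, obtained by sending all but one $x_{i}$ to $+\infty$, is $s\mapsto 1-\exp(-\phi(s))=\psi(s)$; since the preceding lemma pairs $\phi$ with the proper Archimedean generator $\psi\in\mathcal{\Psi}_{d}$, \citet{mcneil2009multivariate} delivers the Williamson $d$-transform representation of $\psi=1-\exp(-\phi)$.

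The hard part will be verifying that the triple $(\phi,F_{R},\nu)$ built from an arbitrary $\phi\in\Phi_{d}$ actually satisfies hypotheses $\mathcal{H}_{1},\mathcal{H}_{2},\mathcal{H}_{3}$ of Theorem~\ref{theorem1}: the condition $\phi(0^{+})=+\infty$ must be promoted to infinite L\'evy activity near the origin, and the polar representation associated with $(F_{R},S)$ must not charge the cemetery coordinates. These checks are explicit but unavoidable, since Theorem~\ref{theorem1} is the tool that legitimates the identification of hazard measures with L\'evy measures throughout the argument.
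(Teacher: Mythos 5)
Your proposal follows essentially the same route as the paper: Lemma~\ref{lemma} gives the expected number of upper records as the Williamson $d$-transform of $H=-\log(1-F_R)$, this mean measure is identified with the L\'{e}vy measure on sets $[x_1,+\infty]\times\cdots\times[x_d,+\infty]$, the generator is read off as $\phi=\mathcal{W}_dH$, and the distribution claim together with the Williamson representation of $1-\exp(-\phi)$ is obtained from Theorem~\ref{theorem1} and \citet{mcneil2009multivariate}. The only substantive difference is that the paper identifies the record mean measure with $\nu$ via the avoidance identity $P(\tilde N(A)=0)=\exp(-E[\tilde N(A)])$ rather than by citing Theorem~\ref{theorem1} directly; these amount to the same thing.

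Two remarks. First, your explicit converse (Williamson inversion of $\phi$, push-forward of the representing measure under $a\mapsto 1/a$, and $F_R(r)=1-\exp(-\mu(\{a:1/a\le r\}))$) is an addition the paper does not spell out: the paper's proof only exhibits generators arising from the record construction as Williamson transforms of $-\log(1-F_R)$, whereas the stated corollary asserts that $\Phi_d$ \emph{consists of} such transforms, so your converse genuinely completes the equivalence. Second, a small circularity to avoid: you justify $\psi=1-\exp(-\phi)\in\mathcal{\Psi}_d$ by saying ``the preceding lemma pairs $\phi$ with the proper Archimedean generator $\psi$,'' but that lemma explicitly defers the proof that $C_\psi$ is a proper copula to this corollary. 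The correct order, which your own argument otherwise supports, is: Theorem~\ref{theorem1} shows $(x_1,\ldots,x_d)\mapsto 1-\exp(-\phi(1/x_1+\cdots+1/x_d))$ is a distribution, hence $\psi$ is $d$-monotone, hence by \citet{mcneil2009multivariate} it is a Williamson $d$-transform and only then a proper generator. Your closing caveat about verifying $\mathcal{H}_1$--$\mathcal{H}_3$ is well taken; the paper does not carry out those checks either.
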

\begin{proof}
The probability of that no record  falls in a set $[x_1,+\infty]\times \cdots \times [x_d,+\infty]$ is given by the formula
$$
P(\tilde{N}([x_1,+\infty]\times \cdots \times [x_d,+\infty])=0)=\exp(-\nu([x_1,+\infty]\times \cdots \times [x_d,+\infty]))
$$
where $\nu$ is the L\'{e}vy measure because no observation can ever fall in the set $[x_1,+\infty]\times \cdots \times [x_d,+\infty]$ without implying that at  least one record is element of the set. But, since the same probability has a second expression
$$
P(\tilde{N}([x_1,+\infty]\times \cdots \times [x_d,+\infty])=0)=\exp(-E[\tilde{N}([x_1,+\infty]\times \cdots \times [x_d,+\infty])]),
$$
that means that the mean measure of the process $\tilde{N}$  and $\nu$ coincide on sets of the form $[x_1,+\infty]\times \cdots \times [x_d,+\infty]$ and 
\begin{align*}
\nu([x_1,+\infty]\times \cdots \times [x_d,+\infty])&=\int \max(0, 1- \frac{x_1+\cdots+x_d}{r})^{d-1} \frac{dF_{R}(r)}{1-F_{R}(r)}.
\end{align*}
Let us define $\phi$ as follows:
\begin{align*}
\phi(x_1+\cdots+x_d)&=\nu([x_1,+\infty]\times \cdots \times [x_d,+\infty])\\
&=\int \max(0, 1- \frac{x_1+\cdots+x_d}{r})^{d-1} \frac{dF_{R}(r)}{1-F_{R}(r)}.
\end{align*}
The univariate tail integral functions are $x_i \mapsto \phi(x_i)$, $i=1, \ldots,d$, and the L\'{e}vy copula corresponding to the tail integral function is $(x_1, \ldots,x_d)\mapsto \phi(\sum_{i=1}^{d} \phi^{-1}(x_i))$. The distribution of the upper record applied to sets $[1/x_1, +\infty]\times \cdots \times [1/x_d, +\infty]$ gives the function $(x_1, \ldots, x_d) \mapsto 1-\exp(-\phi(1/x_1+\cdots+1/x_d))$ which is a distribution (see theorem \ref{theorem1}). This implies that $1-\exp(-\phi)$ is a Williamson $d-$transform (see \citet{mcneil2009multivariate}).
\end{proof}

\section{Concluding remarks}
We investigate correspondences  between L\'{e}vy copulas and proper copulas. It appears that max-infinite divisible distributions are the missing link that helps to establish the connection. However, the relationship is not straightforward: we have to define a partial order compatible with the inclusion of sets bounded away from the origin and have to consider a definition of max-infinite divisibility that differs from the conventional one. We illustrate the correspondence with the relationship between  proper Archimedean copulas and L\'{e}vy Archimedean copulas. 

\section*{Acknowledgements} Many thanks to Steve Matthews and Fran\c{c}ois Verret from Times Series Research and Analysis Centre (TSRAC-Statistics Canada), Marianna Morano, Jean-Fran\c{c}ois Dubois and Kim Bornais from Special Surveys-Statistics Canada for their support. Many thanks  also to Jean-Marc Fillon and Wesley Yung from Economic Statistical Methods Division-Statistics Canada for their availability and support.
\bibliography{Proposal}
\end{document}